\theoremstyle{plain}
\newtheorem{theorem}{Theorem}
\newtheorem{corollary}{Corollary}
\newtheorem{lemma}{Lemma}
\newtheorem{proposition}{Proposition}
\theoremstyle{remark}
\newtheorem{remark}{Remark}
\newcommand{\prob}{\mathbb{P}}
\newcommand{\Ex}{\mathbb{E}}
\newcommand{\pae}{\prob\mbox{-a.e.\ }\omega}
\newcommand{\ppas}{\mathbb{P}\times P\mbox{-a.s.}}
\newcommand{\Rl}{\mathbb{R}}
\newcommand{\PP}{\mathbb{P}\times P}
\newcommand{\EE}{\mathbb{E}\times E}
\newcommand{\X}{\{X_t\}_{t\in\Rl_{\ge 0}}}
\begin{document}

\begin{frontmatter}

\title{Large fluctuations and transport properties of the L\'evy-Lorentz gas}
\runtitle{Large fluctuations and transport properties of the L\'evy-Lorentz gas}

\begin{aug}
\author{\inits{M.}\fnms{Marco} \snm{Zamparo}\ead[label=e1]{marco.zamparo@uniba.it}}
\address{Dipartimento di Fisica, Universit\`a degli Studi di Bari and INFN, Sezione di Bari,
    via Amendola 173, 70126 Bari, Italy \printead{e1}}
\end{aug}

\begin{abstract}
The L\'evy-Lorentz gas describes the motion of a particle on the real
line in the presence of a random array of scattering points, whose
distances between neighboring points are heavy-tailed i.i.d.\ random
variables with finite mean. The motion is a continuous-time,
constant-speed interpolation of the simple symmetric random walk on
the marked points. In this paper we study the large fluctuations of
the continuous-time process and the resulting transport properties of
the model, both annealed and quenched, confirming and extending
previous work by physicists that pertain to the annealed
framework. Specifically, focusing on the particle displacement, and
under the assumption that the tail distribution of the interdistances
between scatterers is regularly varying at infinity, we prove a
precise large deviation principle for the annealed fluctuations and
present the asymptotics of annealed moments, demonstrating annealed
superdiffusion.  Then, we provide an upper large deviation estimate
for the quenched fluctuations and the asymptotics of quenched moments,
showing that the asymptotic diffusive regime conditional on a typical
arrangement of the scatterers is normal diffusion, and not
superdiffusion. Although the L\'evy-Lorentz gas seems to be accepted
as a model for anomalous diffusion, our findings suggest that
superdiffusion is a transient behavior which develops into normal
diffusion on long timescales, and raise a new question about how the
transition from the quenched normal diffusion to the annealed
superdiffusion occurs.
\end{abstract}

\begin{abstract}[language=french]
Le gaz de L\'evy-Lorentz mod\'elise le d\'eplacement d'une particule
sur l'axe des r\'eels en pr\'esence d'obstacles distribu\'es de telle
fa\c{c}on que les distances les uns avec les autres sont des variables
aléatoires i.i.d. \`a queue lourde et moyenne finie.  La dynamique est
donn\'ee par l'interpolation, en temps continu et vitesse fixe, de la
marche al\'eatoire sym\'etrique sur les obstacles.  Cet article
\'etudie les grandes fluctuations du processus en temps continu et les
propri\'et\'es de transport du mod\`ele sous-jacent. Les r\'esultats
obtenus dans les cas de d\'esordre ``annealed'' et ``quenched''
confirment et g\'en\'eralisent des r\'esultats precedents issus de la
physique dans le cas ``annealed''. En particulier, sous l'hypoth\`ese
que la queue de la loi des distances inter-obstacles est \`a variation
r\'eguli\`ere \`a l'infini, nous prouvons dans le cas ``annealed'' un
principe de grandes d\'eviations et obtenons l'expression asymptotique
des moments, qui montrent l'existence d'un regime de sur-diffusion.
Dans le cas quenched, nous obtenons l'expression asymptotique des
moments et une borne sup\'erieure sur de grandes deviations des
fluctuations. Cela nous permet de montrer, pour des configurations
d'obstacles typiques, que le r\'egime asymptotique de diffusion est la
diffusion normale, et non la sur-diffusion. Bien que le gaz de
L\'evy-Lorentz soit en g\'en\'eral utilis\'e pour mod\'eliser la
diffusion anormale, notre r\'esultat sugg\`ere que le regime
sur-diffusif est seulement transitoire. Cela soul\`eve \'egalement la
question de la nature de la transition, entre diffusion normale
``quenched'' et sur-diffusion ``annealed''.
\end{abstract}

\begin{keyword}[class=MSC]
\kwd[Primary ]{60F10}
\kwd{60F15}
\kwd{60G50}
\kwd{60K37}
\kwd{60K50} 
\kwd[; secondary ]{82C41}
\kwd{82C70}
\end{keyword}

\begin{keyword}
\kwd{Random walks on point processes}
\kwd{Random walks in random environment}
\kwd{L\'evy-Lorentz gas}
\kwd{Regularly varying tails}
\kwd{Precise large deviation principles}
\kwd{Convergence of moments}
\kwd{Transport properties}
\kwd{Anomalous diffusion}
\end{keyword}

\end{frontmatter}

\section{Introduction}

The Lorentz gas model, introduced by Lorentz in 1905 to describe the
diffusion of conduction electrons in metals, has illuminated kinetic
theory and transport theory over more than half a century
\cite{Book_LG,Beijeren1}.  In this model a number of fixed scatterers
are distributed at random over a portion of space and one point
particle that has velocity of constant magnitude is reflected or
possibly transmitted when hitting a scatterer.  Deterministic Lorentz
gas models \cite{Book_LG} assume that the velocity of the particle
after a collision with a scatterer is uniquely defined, whereas this
velocity follows some stochastic rule in stochastic Lorentz gases
\cite{Beijeren1,Beijeren2,Grassberger}. Building up on the growing
interest in anomalous diffusion that recent years have witnessed
\cite{Klages1}, in 2000 Barkai, Fleurov, and Klafter \cite{Barkai2}
proposed a one-dimensional stochastic Lorentz gas with heavy-tailed
random interdistances between scatterers for the study of
superdiffusion in porous media.  Superdiffusion is a form of diffusion
at a faster speed than square root of time due to occasional very long
ballistic flights of the particle.  They named \textit{L\'evy-Lorentz
  gas} such a model.  Although some heavy-tailed models, represented
by homogeneous L\'evy walks \cite{Zaburdaev1}, had already been
introduced at that time to describe superdiffusion, the interest in
the L\'evy-Lorentz gas stemmed from the fact that the long ballistic
flights performed by the particle are ascribed to the nature of the
medium, and not to a special law governing the walker's decision as in
L\'evy walks. In fact, in 2008 Barthelemy, Bertolotti, and Wiersma
\cite{Barthelemy1} showed that it is possible to engineer optical
materials, called \textit{L\'evy glasses}, in which light waves
perform a motion between targets that are spaced according to
polynomial-tailed interdistances.

The rigorous mathematical study of the L\'evy-Lorentz gas was
initiated in 2016 by Bianchi, Cristadoro, Lenci, and Ligab\`o
\cite{Bianchi1}, who proved a central limit theorem for both the
quenched and the annealed framework. The present paper aims to get to
the heart of this study by investigating the large fluctuations and
the transport properties of the model, the latter being of main
importance for physics. In this section we define the L\'evy-Lorentz
gas model and review previous findings, both rigorous and
heuristic. The main contributions of the paper are presented and
discussed in the next section.

\subsection{The L\'evy-Lorentz gas model}

A one-dimensional stochastic Lorentz gas can be sketched, in a
nutshell, as follows.  Let $\omega:=\{\omega_r\}_{r\in\mathbb{Z}}$ be
the positions in ascending order of certain scatterers placed on the
real line. A particle moving on the line with unit speed from the
initial position $\omega_0$ collides with these scatterers, $V_n$
being its velocity direction before the $n$th collision, and is
reflected or transmitted with prescribed probability.

Following the formulation of the L\'evy-Lorentz gas given in
\cite{Barkai2}, we assume that $V_1,V_2,\ldots$ are i.i.d.\ random
variables, on a probability space $(\mathcal{S},\mathfrak{S},P)$, that
take values in $\mathbb{Z}_2:=\{-1,+1\}$ with equal probability.  The
$n$th collision occurs at the scattering point labeled by
\begin{equation*}
 S_n:=\sum_{k=1}^nV_k ~~~~~ (S_0:=0)
\end{equation*}
and at the time
\begin{equation*}
T_n^\omega:=\sum_{k=1}^n\big|\omega_{S_k}-\omega_{S_{k-1}}\big| ~~~~~ (T_0^\omega:=0).
\end{equation*}
The process $\{S_n\}_{n\in\mathbb{Z}_{\ge 0}}$ is the simple symmetric
random walk. The motion of the particle is defined by piecewise linear
interpolation of the points
$\{(T_n^\omega,\omega_{S_n})\}_{n\in\mathbb{Z}_{\ge 0}}$, and its
position at time $t\in[T_{n-1}^\omega,T_n^\omega)$ with $n\ge 1$ reads
\begin{equation}
X_t^\omega:=\omega_{S_{n-1}}+V_n(t-T_{n-1}^\omega).
\label{def_position_X}
\end{equation}
Unit speed implies $|X_t^\omega|\le t$, which can be formally seen by
observing that
$|\omega_{S_{n-1}}|=|\sum_{k=1}^{n-1}(\omega_{S_k}-\omega_{S_{k-1}})|\le\sum_{k=1}^{n-1}|\omega_{S_k}-\omega_{S_{k-1}}|=T_{n-1}^\omega$.
Regarding the scatterers arrangements, we suppose that
$\omega:=\{\omega_r\}_{r\in\mathbb{Z}}$ constitutes a random
environment sampled from a probability space
$(\Omega,\mathscr{F},\prob)$, with $\omega_0:=0$ for
definiteness. According to \cite{Barkai2}, we make the hypothesis that
the distances $\Delta_r$ between consecutive scatterers, which
associate each $\omega$ with $\Delta_r^\omega:=\omega_r-\omega_{r-1}$,
form a bilateral sequence of i.i.d.\ positive random variables.  In
\cite{Barkai2} the tail probability $\prob[\Delta_1>\cdot\,]$ of the
variable $\Delta_1$ was assumed to decay in a polynomial way. In this
paper we shall focus on the slightly more general class of regularly
varying tail distributions, which will be introduced in Section
\ref{main_results}.

Let $(\Omega\times\mathcal{S},\mathscr{F}\times\mathfrak{S},\PP)$ be
the product probability space associated with the random environment
and the particle's velocities and let $\Ex$, $E$, and $\Ex\times E$ be
the expectations with respect to $\prob$, $P$, and $\PP$. The
sequences $\{V_n\}_{n\in\mathbb{Z}_{\ge 1}}$ and
$\{\Delta_r\}_{r\in\mathbb{Z}}$ naturally extend to two independent
sequences over the product probability space by composing with the
projection onto $\mathcal{S}$ and $\Omega$, respectively.  With an
abuse of notation in favor of simplicity, which however should not
cast doubt, in the present paper we use the same symbol for both a
variable defined on $(\mathcal{S},\mathfrak{S},P)$, or
$(\Omega,\mathscr{F},\prob)$, and its natural extension over the
product probability space.  Besides, we think of the collision time
$T_n^\omega$ and the particle position $X_t^\omega$ as the
$\omega$-section, for a given $\omega\in\Omega$, of random variables
$T_n$ and $X_t$ on
$(\Omega\times\mathcal{S},\mathscr{F}\times\mathfrak{S},\PP)$.

\begin{remark}
  The particle displacement $X_t$ is $\mathbb{P}\times
  P$-a.s.\ well-defined for all $t>0$, that is
  $\lim_{n\uparrow+\infty}T_n=+\infty$ $\ppas$.  In fact, the
  hypothesis that $\Delta_1$ is positive is tantamount to say that
  $\prob[\Delta_1>\delta]>0$ for some real number $\delta>0$.  The
  second Borel-Cantelli lemma ensures that there are infinitely many
  $\Delta_r^\omega$'s above $\delta$ for $\pae$, so that
  $\lim_{r\uparrow+\infty}|\omega_{-r}|\vee|\omega_r|=+\infty$ for
  $\pae$.  Then, the law of the iterated logarithm for
  $\{S_n\}_{n\in\mathbb{Z}_{\ge 0}}$ gives
  $\lim_{n\uparrow+\infty}T_n^\omega=+\infty$ $P$-a.s.\ for $\pae$
  since $\{T_n^\omega\}_{n\in\mathbb{Z}_{\ge 0}}$ is an increasing
  sequence such that $T_n^\omega\ge |\omega_{S_n}|$ for all
  $n$. Fubini's theorem allows us to conclude that
  $\lim_{n\uparrow+\infty}T_n=+\infty$ $\ppas$.
\end{remark}

\subsection{Previous results}
\label{model_old}

The study of random walks in random environments began in the early
70's \cite{Zeitouni1}, and since that time a great amount of work has
been done for random walks in heavy-tailed random environments with
different purposes. In the \textit{random walk in a random scenery}
with a heavy-tailed scenery
\cite{Kesten,Pene_RWRS1}
a walker explores a given graph and cumulates a heavy-tailed reward
associated with its vertices. This model is a famous example of
probabilistic model with long time dependence.  In the \textit{random
  conductance model} with heavy-tailed random conductances
\cite{Biskup_RC2,Kawazu1}
a walker moves on a given graph with heavy-tailed jumping rates
associated with undirected edges. This model explains subdiffusion,
i.e.\ diffusion at a slower speed than square root.  In the
\textit{random trap model} with heavy-tailed trapping landscape
\cite{BenArous1,Fontes1}
a walker moves on a given graph with heavy-tailed jumping rates
determined by the depth of the traps associated with vertices. This
model accounts for subdiffusion and aging via correlation functions.
Parallel to these, there are \textit{random walks on point processes}
with heavy-tailed spacing
\cite{Berger1,Caputo1,Caputo2,Magdziarz1,Rousselle1,Stivanello} where
a walker jumps between points with heavy-tailed
interdistances. Criteria for recurrence and transience have been
established for these models, as well as some limit theorems for the
suitably rescaled process, including the law of large numbers and the
central limit theorem. The L\'evy-Lorentz gas is naturally connected
with the latter models. In fact, if the focus is on the sequence
$\{\omega_{S_n}\}_{n\in\mathbb{Z}_{\ge 0}}$ of the scatterers that the
particle reaches, then the problem falls under the umbrella of
\textit{random walks on point processes}. However, if the focus is on
the continuous motion $X_t$ of the particle in the physical time $t$,
as ours is, then the problem is completely different because events
must be contextualized into a time frame by involving collision times.

The study of the L\'evy-Lorentz gas in the physical time has been
recently initiated by Bianchi, Cristadoro, Lenci, and Ligab\`o
\cite{Bianchi1}, who addressed the typical fluctuations of $X_t$ as
$t$ goes to infinity under the hypothesis
$\mu:=\Ex[\Delta_1]<+\infty$. The case $\mu=+\infty$ has been later
considered by Bianchi, Lenci, and P\`ene \cite{Bianchi2}. A first
important result of \cite{Bianchi1} is the following strong law of
large numbers for the collision times.
\begin{theorem}[Bianchi \textit{et al.}\ \cite{Bianchi1}]
\label{SLLN_T}
The following limit holds $\ppas$:
\begin{equation*}
\lim_{n\uparrow+\infty}\frac{T_n}{n}=\mu.
\end{equation*}
\end{theorem}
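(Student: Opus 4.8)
\emph{Proof sketch.} The plan is to turn the sum defining $T_n$ into a weighted sum of the i.i.d.\ interdistances, with weights supplied by the independent simple random walk, and then to run a law of large numbers for such weighted sums. For each $k\ge1$ the pair $\{S_{k-1},S_k\}$ consists of two consecutive integers $\{r-1,r\}$ with $r=\max\{S_{k-1},S_k\}$, whence $|\omega_{S_k}-\omega_{S_{k-1}}|=\omega_r-\omega_{r-1}=\Delta_r$; grouping the $n$ terms according to the value of $r$ gives the identity
\[ T_n=\sum_{r\in\mathbb{Z}}L_n(r)\,\Delta_r,\qquad L_n(r):=\#\{1\le k\le n:\{S_{k-1},S_k\}=\{r-1,r\}\}, \]
with only finitely many nonzero terms and $\sum_{r\in\mathbb{Z}}L_n(r)=n$. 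Since $\{V_k\}_{k\ge1}$ and $\{\Delta_r\}_{r\in\mathbb{Z}}$ are independent under $\prob\times P$, conditionally on the walk the $\Delta_r$ remain i.i.d.\ with mean $\mu$, so $n^{-1}T_n$ is a random convex combination (times $n$) of i.i.d.\ variables and the limit should be $\mu$. Because $n\mapsto T_n$ is nondecreasing ($T_n-T_{n-1}=|\omega_{S_n}-\omega_{S_{n-1}}|\ge0$), it suffices to prove the convergence along a geometric subsequence $n_j:=\lfloor(1+\eta)^j\rfloor$ for each fixed $\eta>0$ and then let $\eta\downarrow0$: for $n_j\le n<n_{j+1}$ one has $T_{n_j}/n_{j+1}\le T_n/n\le T_{n_{j+1}}/n_j$ and $n_{j+1}/n_j\to1+\eta$.

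The single property of the walk I would invoke is the elementary bound $\max_{r}L_n(r)\le 2\max_{x}\#\{1\le k\le n:S_k=x\}=O(n^{1/2+\varepsilon})$, valid $\pas$ for every $\varepsilon>0$ (obtained from polynomial moment estimates on the occupation times of the recurrent walk together with Borel--Cantelli; likewise $\max_{k\le n}|S_k|=O(n^{1/2+\varepsilon})$ $\pas$, which bounds the index range of the visited edges). Assume first $\mu<\infty$. Working conditionally on the walk, the variables $L_n(r)(\Delta_r-\mu)$ are independent, centred, and finite in number, so the von Bahr--Esseen inequality gives, for any $p\in(1,2]$ with $\Ex[\Delta_1^{p}]<\infty$,
\[ \Ex\Big[\big|T_n-\mu n\big|^{p}\,\Big|\,\{V_k\}_{k\ge1}\Big]\le C_p\,\Ex\big|\Delta_1-\mu\big|^{p}\sum_{r}L_n(r)^{p}\le C_p\,\Ex\big|\Delta_1-\mu\big|^{p}\,\Big(\max_{r}L_n(r)\Big)^{p-1}n. \]
Under the regularly varying tail hypothesis of the paper some such exponent $p>1$ is available in the case $\mu<\infty$ (take $p$ strictly below the tail index, or $p=2$ if the variance is finite), and the right-hand side is then $O\big(n^{1+(p-1)(1/2+\varepsilon)}\big)$ $\pas$. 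Choosing $\varepsilon$ small enough that the exponent is $<p$, Markov's inequality yields $\prob\big(|T_{n_j}-\mu n_j|>\delta n_j\mid\{V_k\}\big)=O(n_j^{-\gamma})$ for some $\gamma>0$, $\pas$, which is summable in $j$ along the geometric subsequence; a conditional Borel--Cantelli argument followed by Fubini gives $T_{n_j}/n_j\to\mu$ $\ppas$, and the monotonicity interpolation of the previous paragraph completes the case $\mu<\infty$.

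For the case $\mu=\infty$ I would apply the same estimate to the bounded variables $\Delta_r\wedge M$ in place of $\Delta_r$, obtaining $n^{-1}\sum_r L_n(r)(\Delta_r\wedge M)\to\Ex[\Delta_1\wedge M]$ $\ppas$ for each $M$; since $T_n\ge\sum_r L_n(r)(\Delta_r\wedge M)$ this gives $\liminf_n T_n/n\ge\Ex[\Delta_1\wedge M]$, and letting $M\uparrow\infty$ yields $T_n/n\to\infty=\mu$ $\ppas$. The point that requires care — and the reason the conditional-moment route is needed rather than a one-line ergodic argument — is that the random walk is recurrent, so a fixed edge is re-crossed of order $\sqrt n$ times and $T_n$ is \emph{not} an additive functional of a stationary ergodic sequence; this non-ergodic feature is what makes a small amount of integrability beyond $\Ex[\Delta_1]<\infty$ (here furnished by the regularly varying tail) convenient, in order to render the deviation probabilities summable for Borel--Cantelli. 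Everything else — the edge local time identity, the occupation-time bound for simple random walk, von Bahr--Esseen, and the monotonicity interpolation — is routine.
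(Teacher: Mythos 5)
Your edge-local-time decomposition $T_n=\sum_r L_n(r)\,\Delta_r$ is exactly the representation the present paper sets up in Section~3 (there $L_n(r)$ is called $U_{n,r}$), and the pieces of the argument that do not depend on integrability are all sound: the geometric subsequence together with monotonicity of $n\mapsto T_n$, the almost-sure occupation-time bound $\max_r L_n(r)=O(n^{1/2+\varepsilon})$ (which follows, for instance, from the Gaussian tail of Lemma~3.3 and Borel--Cantelli), the conditional Borel--Cantelli scheme with a Fubini step, and the truncation that disposes of the case $\mu=\infty$ and, more generally, yields $\liminf_n T_n/n\ge\mu$ under $\mu>0$ alone.

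The gap is in the upper bound $\limsup_n T_n/n\le\mu$ when $\mu<\infty$. The von Bahr--Esseen step requires $\Ex[|\Delta_1-\mu|^p]<\infty$ for some $p\in(1,2]$, and you supply such a $p$ by invoking the paper's regularly varying tail hypothesis, taking $p$ strictly below the tail index. But the theorem is asserted under the single hypothesis $\mu>0$, and the paper states explicitly, immediately after Corollary~1.3, that ``no hypothesis about the tail probability of $\Delta_1$ other than $\mu<\infty$ is needed in order to establish theorems~1.1 and~1.2.'' This generality is actually used later: the quenched results, Theorems~2.3 and~2.4, and the lemmas they rely on (Lemmas~3.4 and~3.5) assume only $0<\mu<\infty$, with no regular variation, and invoke Theorem~1.1. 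Concretely, your argument fails already at the boundary index $\alpha=1$ with, say, $\ell(x)=(\log x)^{-2}$: there $\mu<\infty$ but $\Ex[\Delta_1^p]=\infty$ for every $p>1$, so no admissible exponent exists; a fortiori it does not cover distributions that are not regularly varying at all. Closing the gap requires controlling the remainder $n^{-1}\sum_r L_n(r)(\Delta_r-M)^+$ with only $\Ex[\Delta_1]<\infty$ in hand; a first-moment Markov bound conditional on the walk gives a contribution of order $\Ex[(\Delta_1-M)^+]$ in expectation but not almost surely, so some additional idea is needed --- a moment-generating-function estimate in the spirit of Lemma~3.4 (which indeed needs only $\mu<\infty$), a Ray--Knight comparison for the edge local times, or a subadditivity/stopping-time sandwich. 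As written, your proof establishes the statement only under the strictly stronger assumption $\Ex[\Delta_1^p]<\infty$ for some $p>1$.
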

Theorem \ref{SLLN_T} was used by the authors of \cite{Bianchi1} to
prove the quenched central limit theorem reported below, which
characterizes the typical fluctuations of the process $\X$ when
$\mu<+\infty$.  We point out that two different perspectives can be
adopted when studying a motion in a random environment: that of the
\textit{quenched process}, where the dynamics conditional on a typical
realization of the environment is analyzed, and that of the
\textit{annealed process}, where the interest is in the effect of
averaging over the environments.
\begin{theorem}[Bianchi \textit{et al.}\ \cite{Bianchi1}]
\label{CLTq}
The following conclusion holds for $\pae$ if $\mu<+\infty$: for all
$x\in\Rl$
  \begin{equation*}
  \lim_{t\uparrow+\infty}P\bigg[\frac{X_t^\omega}{\sqrt{\mu t}}\le x\bigg]=
  \frac{1}{\sqrt{2\pi}}\int_{-\infty}^x e^{-\frac{1}{2}\xi^2}d\xi.
  \end{equation*}
\end{theorem}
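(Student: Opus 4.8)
The plan is to reduce this quenched central limit theorem to Donsker's invariance principle for the simple symmetric random walk $S_n$, using Theorem \ref{SLLN_T} to handle the random time change and the finiteness of $\mu$ to discard two error terms. Set
\[
N_t^\omega:=\sup\big\{n\ge0:T_n^\omega\le t\big\},
\]
which is $\prob\times P$-a.s.\ finite and non-decreasing in $t$ because $T_n\to\infty$ $\prob\times P$-a.s.\ by Theorem \ref{SLLN_T}. Inverting the strong law $T_n/n\to\mu$ in the usual way, by squeezing $t/N_t$ between $T_{N_t}/N_t$ and $T_{N_t+1}/N_t$, Theorem \ref{SLLN_T} gives $N_t/t\to1/\mu$ $\prob\times P$-a.s., whence by Fubini $\lim_{t\uparrow\infty}N_t^\omega/t=1/\mu$ $P$-a.s.\ for $\prob$-a.e.\ $\omega$. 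For $t\in[T_{N_t^\omega}^\omega,T_{N_t^\omega+1}^\omega)$ the definition of the process reads
\[
X_t^\omega=\omega_{S_{N_t^\omega}}+V_{N_t^\omega+1}\big(t-T_{N_t^\omega}^\omega\big)=\mu\,S_{N_t^\omega}+\big(\omega_{S_{N_t^\omega}}-\mu S_{N_t^\omega}\big)+V_{N_t^\omega+1}\big(t-T_{N_t^\omega}^\omega\big),
\]
so, by Slutsky's theorem, it suffices to prove that for $\prob$-a.e.\ $\omega$, under $P$: (i) $\mu S_{N_t^\omega}/\sqrt{\mu t}$ converges in distribution to a standard Gaussian; (ii) $(\omega_{S_{N_t^\omega}}-\mu S_{N_t^\omega})/\sqrt t\to0$ in probability; (iii) $(t-T_{N_t^\omega}^\omega)/\sqrt t\to0$ in probability.

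For (i), write $\mu S_{N_t^\omega}/\sqrt{\mu t}=\sqrt\mu\,S_{N_t^\omega}/\sqrt t$. Since $S_n$ has mean $0$ and variance $n$ and $N_t^\omega/t\to1/\mu$ in $P$-probability for $\prob$-a.e.\ $\omega$, the random-index central limit theorem (Anscombe's theorem), equivalently Donsker's theorem composed with the convergent random time change $N_t^\omega/t\to1/\mu$, gives $S_{N_t^\omega}/\sqrt t\Rightarrow\mathcal N(0,1/\mu)$, hence $\sqrt\mu\,S_{N_t^\omega}/\sqrt t\Rightarrow\mathcal N(0,1)$. In particular $\{S_{N_t^\omega}/\sqrt t\}_{t\ge1}$ is $P$-tight, so for every $\delta>0$ there is $M=M(\delta,\omega)$ with $\limsup_{t\uparrow\infty}P[\,|S_{N_t^\omega}|>M\sqrt t\,]\le\delta$; this localization is used below.

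For (iii), note that $0\le t-T_{N_t^\omega}^\omega<T_{N_t^\omega+1}^\omega-T_{N_t^\omega}^\omega=\big|\omega_{S_{N_t^\omega+1}}-\omega_{S_{N_t^\omega}}\big|=\Delta_{r_t}^\omega$, where the edge index $r_t\in\{S_{N_t^\omega},S_{N_t^\omega}+1\}$ satisfies $|r_t|\le|S_{N_t^\omega}|+1$. Because the $\Delta_r$ are i.i.d.\ with $\Ex[\Delta_1]=\mu<\infty$, the Borel--Cantelli lemma gives $\Delta_r^\omega/|r|\to0$ as $|r|\to\infty$ for $\prob$-a.e.\ $\omega$, which upgrades to $\max_{|r|\le K}\Delta_r^\omega=o(K)$ as $K\uparrow\infty$; with $K=M\sqrt t+1$ this yields $\max_{|r|\le M\sqrt t+1}\Delta_r^\omega=o(\sqrt t)$ for $\prob$-a.e.\ $\omega$. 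Hence, for $\prob$-a.e.\ $\omega$ and any $\varepsilon>0$, once $t$ is so large that $\max_{|r|\le M\sqrt t+1}\Delta_r^\omega\le\varepsilon\sqrt t$ one has $P[\,t-T_{N_t^\omega}^\omega>\varepsilon\sqrt t\,]\le P[\,|S_{N_t^\omega}|>M\sqrt t\,]$; letting $t\uparrow\infty$ and then $\delta\downarrow0$ proves (iii). Claim (ii) is entirely analogous: writing $\tilde\omega_r:=\omega_r-\mu r$, a sum of $|r|$ centred i.i.d.\ terms, the strong law gives $\tilde\omega_r^\omega/|r|\to0$, hence $\max_{|r|\le M\sqrt t}|\tilde\omega_r^\omega|=o(\sqrt t)$ for $\prob$-a.e.\ $\omega$, so on the event $\{|S_{N_t^\omega}|\le M\sqrt t\}$ one has $\big|\omega_{S_{N_t^\omega}}-\mu S_{N_t^\omega}\big|=|\tilde\omega_{S_{N_t^\omega}}^\omega|\le\max_{|r|\le M\sqrt t}|\tilde\omega_r^\omega|=o(\sqrt t)$, while the complementary event carries $P$-probability at most $\delta$.

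The only genuinely model-specific difficulty is (iii): in contrast with a classical renewal central limit theorem, a single interdistance between scatterers can be atypically large, so the overshoot $t-T_{N_t^\omega}^\omega$ is not obviously negligible. What makes the argument go through is that, for a fixed typical environment, the largest among the $O(\sqrt t)$ relevant gaps is $o(\sqrt t)$---an almost sure statement that requires nothing beyond $\mu<\infty$, with no integrability of $\Delta_1^2$ needed, which is precisely why the diffusive scaling $\sqrt{\mu t}$ survives even for heavy-tailed interdistances with infinite variance. Everything else is the routine combination of Donsker's theorem with an almost surely convergent random time change, together with Slutsky's theorem.
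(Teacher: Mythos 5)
The paper does not prove Theorem \ref{CLTq}; it is cited verbatim from Bianchi \emph{et al.}\ \cite{Bianchi1} and used as a known input. There is therefore no internal proof to compare your argument against, so I evaluate your proof on its own merits.

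Your argument is correct, and it is the natural one for a renewal-type quenched CLT. The decomposition of $X_t^\omega$ into $\mu S_{N_t^\omega}$ plus the centred environment term $\omega_{S_{N_t^\omega}}-\mu S_{N_t^\omega}$ plus the overshoot $V_{N_t^\omega+1}(t-T_{N_t^\omega}^\omega)$ is the right one, and each piece is handled soundly: Fubini extracts the $P$-a.s.\ law $N_t^\omega/t\to 1/\mu$ from Theorem \ref{SLLN_T}; Anscombe's theorem (the version that requires only convergence in probability of $N_t/t$, not independence from the walk or a stopping-time structure, and which is available from Donsker plus a Kolmogorov maximal inequality) gives $\sqrt{\mu}\,S_{N_t^\omega}/\sqrt t\Rightarrow\mathcal N(0,1)$; and the two error terms are $o_P(\sqrt t)$ by a tightness localization of $S_{N_t^\omega}$ combined with the $\prob$-a.s.\ facts $(\omega_r-\mu r)/|r|\to 0$ (SLLN) and $\Delta_r^\omega/|r|\to 0$ (Borel--Cantelli from $\Ex[\Delta_1]<\infty$), each of which yields the needed uniform estimate $\max_{|r|\le K}(\cdot)=o(K)$. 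You are also right to highlight that the overshoot term $t-T_{N_t^\omega}^\omega$ is the genuinely model-specific obstacle: unlike in a classical renewal theorem where one simply needs the summand to have a finite mean, here the relevant gap is the \emph{maximum} gap over an $O(\sqrt t)$ window of the fixed environment, and the observation that this maximum is $o(\sqrt t)$ for $\prob$-a.e.\ $\omega$ requires nothing beyond $\mu<\infty$ is precisely what makes the diffusive scaling survive heavy tails. The only cosmetic point is that you invoke ``Anscombe's theorem'' without stating which version; it is worth spelling out that the version you need assumes only $N_t/t\to c$ in probability, since $N_t^\omega$ is certainly not a stopping time of the walk in the usual filtration, nor independent of it, and the classical Anscombe formulation carries a uniform-continuity-in-probability hypothesis that should be noted to be automatic here.
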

Theorem \ref{CLTq} immediately gives an annealed central limit theorem
thanks to Fubini's theorem and the dominated convergence theorem.
\begin{corollary}[Bianchi \textit{et al.}\ \cite{Bianchi1}]
  \label{CLTa}
  The following conclusion holds for all $x\in\Rl$ provided that
  $\mu<+\infty$:
  \begin{equation*}
   \lim_{t\uparrow+\infty}\,\PP\bigg[\frac{X_t}{\sqrt{\mu t}}\le x\bigg]=
   \lim_{t\uparrow+\infty}\int_{\Omega}P\bigg[\frac{X_t^\omega}{\sqrt{\mu t}}\le x\bigg]\,\prob[d\omega]
   =\frac{1}{\sqrt{2\pi}}\int_{-\infty}^xe^{-\frac{1}{2}\xi^2}d\xi.
  \end{equation*}
\end{corollary}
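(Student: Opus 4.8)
The plan is to derive the statement from the quenched central limit theorem of Theorem \ref{CLTq} through Fubini's theorem and dominated convergence, exactly as announced in the excerpt.

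First I would justify the first equality, which amounts to disintegrating the annealed law of $X_t$ along the environment. Fix $t>0$ and $x\in\Rl$. By the very definition of $X_t$ on the product space $(\Omega\times\mathcal{S},\mathscr{F}\times\mathfrak{S},\PP)$ as the variable whose section over $\mathcal{S}$ is $X_t^\omega$ for each $\omega$, the set $\{X_t/\sqrt{\mu t}\le x\}$ is a product-measurable subset of $\Omega\times\mathcal{S}$ whose $\omega$-section equals $\{X_t^\omega/\sqrt{\mu t}\le x\}\in\mathfrak{S}$. Applying Tonelli's theorem to the indicator of this set shows that $\omega\mapsto P[X_t^\omega/\sqrt{\mu t}\le x]$ is $\mathscr{F}$-measurable and that
\[
\PP\Big[\frac{X_t}{\sqrt{\mu t}}\le x\Big]=\int_\Omega P\Big[\frac{X_t^\omega}{\sqrt{\mu t}}\le x\Big]\,\prob[d\omega].
\]

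Next I would pass the limit $t\uparrow\infty$ through the integral. By Theorem \ref{CLTq}, for $\pae$ the integrand converges, as $t\uparrow\infty$, to $\frac{1}{\sqrt{2\pi}}\int_{(-\infty,x]}e^{-\xi^2/2}d\xi$, which is the value at the continuity point $x$ of the standard Gaussian distribution function. Since $0\le P[X_t^\omega/\sqrt{\mu t}\le x]\le 1$ and constants are $\prob$-integrable on the probability space $(\Omega,\mathscr{F},\prob)$, the dominated convergence theorem permits the exchange of limit and integral, yielding the claimed value. There is no genuine obstacle here: the only point that deserves attention is the product-measurability of $\{X_t/\sqrt{\mu t}\le x\}$ together with the identification of its $\omega$-sections, which is what makes Tonelli applicable; the remaining ingredients — uniform boundedness of the integrand by $1$ and the continuity of the Gaussian law at every $x$ — are routine.
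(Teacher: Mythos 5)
Your argument is correct and is precisely the argument the paper intends: the first equality is Fubini/Tonelli for the indicator of the product-measurable event, and the second is dominated convergence (with the trivial bound by $1$ on the probability space $\Omega$) applied to the $\pae$ pointwise limit furnished by Theorem \ref{CLTq}. Nothing to add.
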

No hypothesis about the tail probability $\prob[\Delta_1>\cdot\,]$ of
$\Delta_1$ other than $\mu<+\infty$ is needed in order to establish
Theorems \ref{SLLN_T} and \ref{CLTq}. On the contrary, the case
$\mu=+\infty$ requires to specify $\prob[\Delta_1>\cdot\,]$ somehow.
In \cite{Bianchi2}, the authors exploited techniques from
\textit{random walks in random sceneries} to investigate the typical
fluctuations of $\X$ when $\Delta_1$ belongs to the basin of normal
attraction of an $\alpha$-stable random variable $Z$ with index
$\alpha\in(0,1)$. We recall that normal attraction means that
$n^{-\frac{1}{\alpha}}\sum_{r=1}^n\Delta_r$ converges in distribution
to $Z$ as $n$ is sent to infinity \cite{Feller_normal}.  One has
$\mu=+\infty$ in this case. The main result of \cite{Bianchi2} is that
the finite dimensional distributions of the process
$\{\lambda^{-\frac{1}{\alpha+1}}X_{t\lambda}\}_{t\in\Rl_{\ge 0}}$
converge, as the scaling parameter $\lambda$ goes to infinity, to
those of a stochastic process related to the Kesten-Spitzer process
\cite{Kesten}.  The present paper deals with the case $\mu<+\infty$
and makes use of Theorems \ref{SLLN_T} and \ref{CLTq} and Corollary
\ref{CLTa} for some essential steps.

\subsection{Transport properties and heuristics}
\label{transport}

The identification of anomalous diffusion in physics generally passes
through inspection of the time scaling of the mean-square displacement
\cite{Klages1}.  Regarding the possibility of computing the
mean-square displacement of the L\'evy-Lorentz gas, it must be said
that the findings of \cite{Bianchi1} and \cite{Bianchi2} about the
typical fluctuations do not suffice to determine the moments of the
particle position $\X$, since they could be, and are, affected by the
large fluctuations.  The annealed mean square displacement of the
L\'evy-Lorentz gas defined by the law
$\prob\big[\Delta_1>x\big]:=(x\vee 1)^{-\alpha}$ for every $x>0$ was
first investigated numerically for $\alpha=3/2$ by Barkai, Fleurov,
and Klafter \cite{Barkai2}. Subsequently, by resorting to heuristic
arguments based on a similar electric problem \cite{Beenakker1},
Burioni, Caniparoli, and Vezzani \cite{Burioni1} suggested that the
annealed moments of $X_t$ behave for large times $t$ as
\begin{equation}
  \EE\big[|X_t|^q\big]\sim C
  \begin{cases}
    t^{\frac{q}{\alpha+1}} & \mbox{if $\alpha<1$ and $q<\alpha$},\\
    t^{q-\frac{\alpha^2}{\alpha+1}} & \mbox{if $\alpha<1$ and $q>\alpha$},\\
    t^{\frac{q}{2}} & \mbox{if $\alpha>1$ and $q<2\alpha-1$}, \\
    t^{q-\alpha+\frac{1}{2}} & \mbox{if $\alpha>1$ and $q>2\alpha-1$},
  \end{cases}
  \label{scaling}
\end{equation}
where $C$ is some non-specified constant that possibly depends on $q$
and $\alpha$. The symbol $\sim$ denotes asymptotic equivalence: two
functions $f$ and $g$ over the positive real axis are asymptotic
equivalent, written symbolically as $f(t)\sim g(t)$, if
$\lim_{t\uparrow+\infty}f(t)/g(t)=1$. According to (\ref{scaling})
with $q=2$, the mean-square displacement of the L\'evy-Lorentz gas
should exhibit normal scaling with exponent $1$ if $\alpha>3/2$ and
superdiffusive scaling with exponent large than $1$ if $\alpha<3/2$.
Very recently, the asymptotic behavior (\ref{scaling}) has been
corroborated through simplified models, both deterministic dynamical
systems \cite{Rondoni1,Rondoni2,Rondoni3} and random walks
\cite{Artuso1,Artuso2}, which were argued to approximate the
L\'evy-Lorentz gas to some extent. Vezzani, Barkai, and Burioni
\cite{Vezzani1,Vezzani2} have also provided a heuristic estimate of
the far tail of the annealed displacement distribution by appealing to
the \textit{principle of a single big jump} \cite{Foss} from the
heavy-tailed world.  The present work was stimulated by the quest for
providing rigorous insight into the large fluctuations of the
L\'evy-Lorentz gas and for establishing (\ref{scaling}) on a solid
mathematical ground.  Superdiffusion, if any, is connected with the
way a large fluctuation of the process $\X$ occurs.

\section{Main results of the paper and discussion}
\label{main_results}

In this paper we investigate the large fluctuations of the process
$\X$ and its moments under the assumption
$\mu:=\Ex[\Delta_1]<+\infty$. Our basic hypothesis to deal with the
annealed problem is that the tail probability
$\prob[\Delta_1>\cdot\,]$ of $\Delta_1$ is regularly varying at
infinity with index $-\alpha$, meaning that for every $x>0$
\begin{equation*}
\prob\big[\Delta_1>x\big]=\frac{\ell(x)}{x^\alpha}
 \end{equation*}
with a \textit{slowly varying function} at infinity $\ell$. A slowly
varying function at infinity $\ell$ is a positive measurable function
on some neighborhood of infinity that satisfies the scale-invariance
property $\lim_{x\uparrow+\infty}\ell(\beta x)/\ell(x)=1$ for any
positive real number $\beta$.  Calculations that involve slowly
varying functions are made possible by Karamata theory and we refer to
\cite{RV} for details.  Two of their elementary properties are
$\lim_{x\uparrow+\infty}x^\gamma\ell(x)=+\infty$ and
$\lim_{x\uparrow+\infty}x^{-\gamma}\ell(x)=0$ for all $\gamma>0$ (see
\cite{RV}, Proposition 1.3.6). It follows that a necessary condition
for $\mu:=\Ex[\Delta_1]=\int_0^{+\infty}\prob[\Delta_1>x]\,dx<+\infty$
is $\alpha\ge 1$, whereas a sufficient condition is $\alpha>1$. Thus,
the assumption $\mu<+\infty$ requires $\alpha\ge 1$ and a further
hypothesis for integrability on $\ell$ when $\alpha=1$.  We present
annealed results first, and the corresponding quenched results later.

\subsection{Annealed fluctuations and annealed moments}
\label{annealed_results}

Our study begins with the large fluctuations of the particle
displacement averaged over the environments. The L\'evy-Lorentz gas
model fulfills a pretty obvious left-right symmetry, which is
described by the following lemma whose formal proof is omitted because
straightforward .  This symmetry allows us to restrict to positive
fluctuations.
\begin{lemma}
  \label{simmetria}
$\{-X_t\}_{t\in\Rl_{\ge 0}}$ and $\X$ share the same finite-dimensional distributions under the law $\PP$.
\end{lemma}

In order to characterize the positive large fluctuations of
$\X$, for each $\alpha\ge 1$ we introduce the
measurable function $f_\alpha$ that maps any $x\in(0,1)$ in
\begin{equation}
\label{deffa}
f_\alpha(x):=\sum_{l=0}^{+\infty}\mathds{1}_{\big\{l<\frac{1-x}{2x}\big\}}\Bigg[\bigg(\frac{2l+2}{1+x}\bigg)^\alpha
    -\bigg(\frac{2l}{1-x}\bigg)^\alpha\Bigg].
\end{equation}
Elementary properties of $f_\alpha$ that allow to appreciate the main
results of the section are stated by the following lemma, which is
proved in Appendix \ref{proof:prop_funalpha}.
\begin{lemma}
  \label{prop_funalpha}
  $0<f_\alpha(x)\le x^{-\alpha}$ for all $x\in(0,1)$ and $\lim_{x\downarrow 0}x^\alpha f_\alpha(x)=1/(\alpha+1)$.
\end{lemma}

The function $f_\alpha$ is put into context by the following theorem,
which provides a sharp large deviation estimate for
$\X$ within the annealed framework and
represents our first main result. To understand its content, recall
that the positive values of $X_t$ does not exceed $t$, with the
consequence that $\PP[X_t>xt]$ is non-trivial only for $x\in(0,1]$.
\begin{theorem}
\label{atail} 
Assume that $\mu<+\infty$ and that $\prob[\Delta_1>\cdot\,]$ is
regularly varying at infinity with index $-\alpha\le-1$. For every
$x\in(0,1]$ set
\begin{equation*}
  F(x):=\frac{1}{\sqrt{2\pi\mu}}\int_x^1 f_\alpha\bigg(\frac{x}{\xi}\bigg)\frac{d\xi}{\xi^\alpha\sqrt{1-\xi}}.
\end{equation*}
Then, the following conclusion holds for any number $\delta\in(0,1]$:
\begin{equation*}
 \adjustlimits\lim_{t\uparrow+\infty}\sup_{~x\in[\delta,1]~}\!\Bigg\{\bigg|\frac{\PP[X_t>xt]}{\sqrt{t}\,\prob[\Delta_1>t]}-F(x)\bigg|\Bigg\}=0.
\end{equation*}
\end{theorem}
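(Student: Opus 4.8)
My plan is to prove the estimate by a single‑big‑jump analysis, in which the atypically large value $xt$ of the annealed displacement is produced by one interdistance of size comparable to $t$ that the particle uses as a long ballistic corridor. First I would reduce to this scenario. Since $X_t$ always lies between $\omega_{S_{N_t}}$ and $\omega_{S_{N_t+1}}$, where $N_t$ is the number of collisions up to time $t$, one has $X_t\le\omega_{M_t+1}=\sum_{r=1}^{M_t+1}\Delta_r$ with $M_t:=\max_{0\le n\le N_t}S_n$ the rightmost visited site; by Theorem~\ref{SLLN_T}, $N_t\sim t/\mu$ and $M_t\le C\sqrt{t\log t}$ up to a superpolynomially small probability once $C$ is large. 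On that range I would invoke the one‑big‑jump principle for partial sums of regularly varying i.i.d.\ variables, together with the fact that after truncating every interdistance at level $\beta t$ the partial sum of $O(\sqrt t)$ of them exceeds $xt$ only with probability $o\big(\sqrt t\,\prob[\Delta_1>t]\big)$, to show that --- for any fixed $\beta\in(0,\delta)$ --- the contributions to $\PP[X_t>xt]$ of environments having either no interdistance exceeding $\beta t$ among the first $C\sqrt{t\log t}$ sites, or two or more such interdistances, are both $o\big(\sqrt t\,\prob[\Delta_1>t]\big)$. Since the mechanism below only ever exploits interdistances of size at least $xt\ge\delta t>\beta t$, nothing relevant is discarded, and it remains to analyse configurations with exactly one slow bond $\Delta_r>\beta t$ at a site $1\le r\le C\sqrt{t\log t}$ (only $r\ge 1$ can help a positive fluctuation, the particle starting from $\omega_0=0$).

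Next I would analyse the walk in the presence of such a single slow bond, say with $\Delta_r>\eta t$ and $\eta>0$. Using that $S$ is independent of the environment, the particle first reaches the left endpoint $\omega_{r-1}$ of the slow bond exactly when $S$ attains a new running maximum at level $r-1$; by the hitting‑time identity $\prob[\text{first passage to }j\text{ at step }M]=\tfrac{j}{M}\,\prob[S_M=j]$ and the local central limit theorem, $\prob[S\ \text{hits a new running maximum at step }M]\sim(2\pi M)^{-1/2}$, so, since the $M$th collision occurs at time $\sim\mu M$ by Theorem~\ref{SLLN_T}, summing over $r$ and writing $M=\sigma t/\mu$ yields the density $\sqrt{t/(2\pi\mu)}\,\sigma^{-1/2}\,d\sigma$ for the rescaled first‑passage time $\sigma$ --- this is the origin both of the $\sqrt t$ prefactor and, after the substitution $\xi=1-\sigma$, of the weight $(1-\xi)^{-1/2}$. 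After reaching $\omega_{r-1}$, with remaining budget $(1-\sigma)t$ the particle performs a ``crossing dance'' across the slow bond: each traversal costs exactly $\eta t$ units of time, whereas each intervening excursion into the ordinary environment costs $o(t)$ units with probability $1-O(t^{-1/2})$ --- these excursions have infinite‑mean, heavy‑tailed durations, so the single‑big‑jump heuristic applies once more. Hence at time $t$ the particle is in the middle of its $k$th traversal, $k=\lfloor(1-\sigma)/\eta\rfloor+1$, and a case analysis on the parity of $k$ and the direction of the current traversal gives $\{X_t>xt\}$, up to an $o(t)$ error, precisely when
\begin{equation*}
\eta\in\mathcal R_x(\sigma):=\Big(\tfrac{1-\sigma+x}{2},\infty\Big)\cup\bigcup_{l\ge 1,\ l<\frac{1-\sigma-x}{2x}}\Big(\tfrac{1-\sigma+x}{2l+2},\tfrac{1-\sigma-x}{2l}\Big),
\end{equation*}
a set that is empty for $\sigma\ge 1-x$ and contained in $[x,\infty)$ otherwise, confirming that only interdistances of size at least $xt$ enter.

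To conclude I would assemble the asymptotics. Regular variation gives $\prob[\Delta_1>\eta t]=\eta^{-\alpha}\prob[\Delta_1>t](1+o(1))$ uniformly for $\eta$ in compact subsets of $(0,\infty)$, so the slow bond has size in $[\eta t,(\eta+d\eta)t]$ with probability $\alpha\eta^{-\alpha-1}\prob[\Delta_1>t](1+o(1))\,d\eta$; integrating this over $\mathcal R_x(\sigma)$ and using $\int_{(1-\sigma+x)/(2l+2)}^{(1-\sigma-x)/(2l)}\alpha\eta^{-\alpha-1}\,d\eta=\big(\tfrac{2l+2}{1-\sigma+x}\big)^\alpha-\big(\tfrac{2l}{1-\sigma-x}\big)^\alpha$ for $l\ge 1$, together with its $l=0$ analogue for the first interval of $\mathcal R_x(\sigma)$, identifies the result as $\prob[\Delta_1>t]\,(1-\sigma)^{-\alpha}f_\alpha\!\big(\tfrac{x}{1-\sigma}\big)$. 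Multiplying by the first‑passage density and integrating over $\sigma\in(0,1-x)$ then gives
\begin{equation*}
\PP[X_t>xt]=\sqrt{\tfrac{t}{2\pi\mu}}\;\prob[\Delta_1>t]\int_0^{1-x}\frac{f_\alpha\!\big(\tfrac{x}{1-\sigma}\big)}{(1-\sigma)^\alpha\,\sqrt\sigma}\,d\sigma\;\big(1+o(1)\big)=\sqrt t\;\prob[\Delta_1>t]\,F(x)\,\big(1+o(1)\big),
\end{equation*}
the last equality being the change of variables $\xi=1-\sigma$. Uniformity over $x\in[\delta,1]$ I would obtain from the continuity of $F$ on $[\delta,1]$, from the uniformity of the preceding error estimates in $r$ and in $\eta$ over compact sets, from Potter's bounds (to control the tail $\eta\to\infty$, where the regular‑variation convergence is not uniform), and from a separate treatment of $x$ near $1$ using $F(1)=0$ and the monotonicity of $x\mapsto\PP[X_t>xt]$.

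I expect the main obstacle to be making the crossing‑dance step quantitative and uniform: one must show that the number and the timing of traversals concentrate, that the total excursion time is $o(t)$ with the stated probability uniformly in the parameters, and --- most delicately --- that the diffusive fluctuations of $\omega_{S_n}$ around its ballistic prediction, together with the unit indeterminacy of $k$, only disturb the event $\{X_t>xt\}$ for configurations $(\sigma,\eta)$ lying on the Lebesgue‑null boundary of $\mathcal R_x(\sigma)$, all with constants independent of $x\in[\delta,1]$, of the site $r$, and of $\eta$ in compact sets; a secondary difficulty is the truncated‑sum large‑deviation bound underpinning the reduction.
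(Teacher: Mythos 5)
Your plan identifies all the right ingredients and, in particular, derives the correct constraint set $\mathcal{R}_x(\sigma)$ and the correct integral representation of $F(x)$. It is, in spirit, the same single-big-jump analysis that the paper carries out, but it is organized differently. The paper parametrizes the event by the \emph{current} collision index $n$ and the local time $u=U_{n-1,k}$ of the current edge, exploiting the exact combinatorial identity (Lemma~\ref{sommeU} and Lemma~\ref{sommeUv}) that $\sum_r P[U_{n-1,r}=u,\,R_n=r,\,V_n=\pm 1]\sim(2\pi n)^{-1/2}$ \emph{for every fixed} $u$, with the correct parity of $u$ selecting the sign of $V_n$; the time bookkeeping is then closed by replacing $\mathsf{T}_{n,k}:=T_n-\Delta_kU_{n,k}$ with $\mu n$ via the strong law. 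You instead parametrize by the first-passage step $M$ and then count traversals, using the ``first crossing'' case $u=0$ of the above identity plus a claim that the intervening excursions take time $o(t)$ with probability $1-O(t^{-1/2})$. This is a genuine alternative: the two parametrizations coincide exactly because of the (nontrivial) fact that the density of the $(u+1)$th passage at step $n$ is $\sim(2\pi n)^{-1/2}$ independently of $u$, which your excursion-concentration claim re-proves in a different guise. What the paper's route buys is that it avoids controlling heavy-tailed excursion times at all: it simply sums over $(n,u)$ with the exact binomial densities. What your route buys is a more standard ``big jump'' narrative and a cleaner reduction step.

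There are, however, two places where your sketch has a real gap that the paper explicitly closes. First, after reducing to ``exactly one slow bond $\Delta_r>\beta t$ with $r\ge1$'', you analyse only configurations in which the particle is in the middle of a traversal of that slow bond at time $t$. But the event $\{X_t>xt\}$ also contains configurations in which the slow bond has been fully crossed an odd number of times and the particle is currently to its right on ordinary edges (then $X_t\ge\omega_r>\Delta_r>xt$ automatically); this is precisely the paper's ``$C_t$'' contribution (bound~(\ref{Phi1})), whose control is a nontrivial separate estimate using the independence of $\Delta_k$ from $\mathsf{T}_{n,k}$ and the law of large numbers, and your reduction does not touch it. Second, your ``crossing dance'' step presupposes that the $O(1/\delta)$ excursion times between consecutive traversals are simultaneously $o(t)$ with a uniform-in-$(x,r,\eta)$ error that is $o(\sqrt t\,\prob[\Delta_1>t])$ after intersection with the big-bond event. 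You flag this correctly as the main obstacle; the point to stress is that, because the excursion durations are genuinely heavy-tailed with infinite mean, making this step quantitative uniformly is roughly as much work as the paper's Lemma~\ref{sommeUv}, which is the combinatorial identity the paper uses to bypass the issue entirely. Both points need to be addressed before the heuristic becomes a proof.
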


While Corollary \ref{CLTa} describes the typical fluctuations of $X_t$
at large $t$, Theorem \ref{atail} characterizes the large fluctuations
with order of magnitude $t$.  Since, basically, $\X$ is a process of
accumulation of interdistances between scattering points, the
explanation of Theorem \ref{atail} comes from the theory of large
deviations for sums of i.i.d.\ random variables. Generally speaking,
the only significant mechanism to produce a large fluctuation of a sum
of i.i.d.\ random variables is that either many small deviations all
in the same direction occur or a single summand takes a very large
value \cite{SUMRV}. The latter is known with the folklore name of
\textit{principle of a single big jump} \cite{Foss}.  One event or the
other depends on the satisfiability of the Cram\'er's condition, which
is the property of the moment generating function of the summands to
be finite in an open neighborhood of the origin. When the Cram\'er
condition is satisfied, the \textit{Gibbs conditioning principle}
\cite{Dembo_Gibbs} states that, subject to a large deviation of the
sum, the summands become i.i.d. in the limit of infinite summands, but
their marginal distribution is modified in such a way that the
behavior imposed on the sum becomes typical. In this case, the
probability of a large fluctuation of the sum is suppressed
exponentially and its sharp asymptotics is described by the Cram\'er
theorem and its refinements \cite{Hoglund}. For the stochastic Lorentz
gas under consideration, these arguments lead to the conclusion that
if the interdistances between scatterers satisfied the Cram\'er's
condition, then there would be no long gaps to make long ballistic
flights, and a large fluctuation of the displacement would be realized
by many jumps all in the same direction. This is the trait of normal
diffusion.

The situation is the opposite for subexponential summands that violate
the Cram\'er's condition, as it is the case with the L\'evy-Lorentz
gas.  It has been established for a large class of subexponential
random variables that the conditional distribution of the summands
subject to a large deviation of their sum converges to a product of
independent copies of the original distribution, except for one
variable that realizes the large deviation event by taking a very
large value \cite{Bigjump2}. This result provides a detailed picture
of the \textit{principle of a single big jump}.  In this case, the
probability of a large fluctuation of the sum displays a
subexponential decay and its sharp asymptotics has been investigated
for several types of summands
\cite{Bigjump1,Mikosch_LDP,Nagaev,Ng1,Rozovski}. For regularly varying
summands at infinity with finite mean we refer to Theorem 3.1 of
\cite{Ng1}, which gives the following \textit{precise large deviation
  principle} for the interdistances between scatterers
$\Delta_1,\Delta_2,\ldots$ under the assumption $\mu<+\infty$: for
each $\gamma>0$
\begin{equation}
  \label{LDPheavy}
  \adjustlimits\lim_{s\uparrow+\infty}\sup_{~z\ge \gamma s~}\!\Bigg\{\bigg|\frac{\prob\big[\sum_{r=1}^s\Delta_r-\mu s>z\big]}{s\,\prob[\Delta_1>z]}-1\bigg|\Bigg\}=0.
\end{equation}
Formula (\ref{LDPheavy}) creates a logical bridge between the
\textit{principle of a single big jump} and Theorem \ref{atail}.  In
fact, the displacement $X_t$ cumulates the lengths of the different
edges that the particle travels by time $t$, the number of which is of
the order of magnitude of $\sqrt{t}$ since the process of scatterer
exploration is the simple symmetric random walk.  Accordingly, the
\textit{principle of a single big jump} suggests that if the
L\'evy-Lorentz gas is averaged over the environments, then the
particle undergoes a large displacement at time $t$ through long
inertial flights over a single large gap between scatterers, which is
chosen from a number of edges proportional to $\sqrt{t}$. This
picture, which is the hallmark of annealed superdiffusion, perfectly
matches with the scaling factor $\sqrt{t}\,\prob[\Delta_1>t]$ in
Theorem \ref{atail} through formula (\ref{LDPheavy}). What exactly
happens is that once the particle has reached the longest edge it
starts bouncing back and forth in this gap, being rapidly scattered
back by many close collisions as it leaves the longest gap.
Consistently, it is very likely that at any current time the particle
is in this gap. In other words, the longest edge is expected to be the
current edge.  The possibility to travel several times over the
largest gap is responsible for the complex structure of the function
$F$ that enters Theorem \ref{atail}.

Theorem \ref{atail} is proved in Section \ref{proof:atail}, after that
some preliminary results about the simple symmetric random walk and
the collision times are presented in Section \ref{prel}.  The tail
estimate stated by Theorem \ref{atail} has been previously sketched
out by Vezzani, Barkai, and Burioni \cite{Vezzani1,Vezzani2} by a
heuristic use of the \textit{principle of a single big jump}.  In
Section \ref{proof:atail} we shall provide solid mathematical
justifications to the use of this principle, through formula
(\ref{LDPheavy}), and to the fact that the longest edge is the current
edge. Such justifications are completely missed in papers
\cite{Vezzani1} and \cite{Vezzani2} by physicists

The next theorem is our second main result, which identifies the
asymptotic behavior of the annealed moments of $X_t$ of generic order
$q$ as $t$ goes to infinity. In order to appreciate the content of the
theorem, let us observe that $\int_0^1 x^{q-1}f_\alpha(x)\,dx$ exists
finite if and only if $q>\alpha$ by Lemma \ref{prop_funalpha}, and
this is the case when $q=2\alpha-1$ and $\alpha>1$ or $q>2\alpha-1$
and $\alpha\ge 1$. Here the symbol $\sim$ denotes asymptotic
equivalence as defined in Section \ref{transport} and $\Gamma$ is the
Euler gamma function. The Euler gamma function allows one to express
the $q$-order moment of a Gaussian random variable with mean $0$ and
variance $\mu$ as $\sqrt{(2\mu)^q/\pi}\,\Gamma(\frac{q+1}{2})$.
\begin{theorem}
  \label{annealed_moment}
  Assume that $\mu<+\infty$ and that $\prob[\Delta_1>\cdot\,]$ is
  regularly varying at infinity with index $-\alpha\le-1$. For every
  $q>0$ set
\begin{equation*}
 g_q:=\sqrt{\frac{(2\mu)^q}{\pi}}\,\Gamma\bigg(\frac{q+1}{2}\bigg)
\end{equation*}
  and
\begin{equation*}
  d_q:=\sqrt{\frac{2}{\mu}}\,\frac{\Gamma(q-\alpha+1)}{\Gamma(q-\alpha+3/2)}\int_0^1 qx^{q-1}f_\alpha(x)\,dx.
\end{equation*}
Then, the following asymptotic equivalence holds for all $q>0$:
  \begin{equation*}
  \EE\big[|X_t|^q\big]\sim
  \begin{cases}
    g_qt^{\frac{q}{2}} & \mbox{if $q<2\alpha-1$ or $q=2\alpha-1$ and $\alpha=1$},\\
    g_qt^{\frac{q}{2}}+d_qt^{q+\frac{1}{2}}\prob[\Delta_1>t] & \mbox{if $q=2\alpha-1$ and $\alpha>1$},\\
    d_qt^{q+\frac{1}{2}}\prob[\Delta_1>t] & \mbox{if $q>2\alpha-1$}.
  \end{cases}
  \end{equation*}
\end{theorem}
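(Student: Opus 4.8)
The plan is to pass from moments to tail integrals and then to separate the Gaussian scale $x\asymp t^{-1/2}$ from the heavy-tailed scale $x\asymp 1$. By the symmetry of Lemma~\ref{simmetria} and the layer-cake formula, $\EE[|X_t|^q]=2\int_0^\infty q u^{q-1}\PP[X_t>u]\,du$, and since unit speed forces $\PP[X_t>u]=0$ for $u>t$, the substitution $u=xt$ gives the exact identity $\EE[|X_t|^q]=2t^q\int_0^1 q x^{q-1}\PP[X_t>xt]\,dx$. I would split $(0,1)$ into the three blocks $(0,M/\sqrt t)$, $[M/\sqrt t,\delta)$, $[\delta,1)$, with $M$ large and $\delta$ small, estimate each, and then send $t\to\infty$, then $M\to\infty$, then $\delta\downarrow 0$.

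On the inner block, the substitution $x=y/\sqrt t$ turns its contribution into $2t^{q/2}\int_0^M q y^{q-1}\PP[X_t>y\sqrt t]\,dy$; Corollary~\ref{CLTa} gives $\PP[X_t>y\sqrt t]\to\Psi(y):=\frac{1}{\sqrt{2\pi}}\int_{y/\sqrt\mu}^\infty e^{-\xi^2/2}d\xi$ for each $y>0$, and since the integrand is dominated by the integrable function $q y^{q-1}$ on $(0,M)$, dominated convergence makes the inner block equal $2t^{q/2}(A_M+o(1))$ with $A_M:=\int_0^M q y^{q-1}\Psi(y)\,dy\uparrow g_q/2$ (the last identity being the moment formula recalled before the statement). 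On the outer block, the uniformity in $x$ of Theorem~\ref{atail} gives $2t^{q+1/2}\prob[\Delta_1>t]\big(\int_\delta^1 q x^{q-1}F(x)\,dx+o(1)\big)$; exchanging the order of integration in the definition of $F$ and using $\int_0^1\xi^{q-\alpha}(1-\xi)^{-1/2}d\xi=\sqrt\pi\,\Gamma(q-\alpha+1)/\Gamma(q-\alpha+3/2)$ yields the clean identity $2\int_0^1 q x^{q-1}F(x)\,dx=d_q$, which explains the constant $d_q$ and, via $F(x)=O(x^{-\alpha})$, shows it is finite exactly when $q>\alpha$ — precisely the situation in which $d_q$ appears.

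The middle block is the crux: I would need a crude uniform upper tail bound of the form $\PP[X_t>xt]\le C e^{-c x^2 t}+C\sqrt t\,\prob[\Delta_1>c x t]$ for all $x\in(0,1]$ and $t\ge1$ (or any variant combining a sub-Gaussian term with a single-big-jump term bearing the characteristic prefactor $\sqrt t$, not $t$). This is the sort of estimate that the preliminary analysis of the simple symmetric random walk's local times and of the collision times in section~\ref{prel} should supply, and it is essentially implicit in the proof of Theorem~\ref{atail}; obtaining it is what I expect to be the main obstacle. Given it, $x=y/\sqrt t$ handles the sub-Gaussian term while Karamata/Potter bounds handle the heavy one, and one finds the middle block is $\le C\varepsilon_M t^{q/2}+C\delta^{q-\alpha}t^{q+1/2}\prob[\Delta_1>t]$ (with $\varepsilon_M\to0$) when $q>\alpha$, and $o(t^{q/2})$ when $q\le\alpha$; either way it drops out under the iterated limit.

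Assembling the blocks: for $q<2\alpha-1$ (and the boundary case $q=2\alpha-1$, $\alpha=1$) the middle and outer blocks are $o(t^{q/2})$, so $\EE[|X_t|^q]/t^{q/2}$ is squeezed between $2A_M$ and $2A_M+C\varepsilon_M$, giving $g_q$; for $q>2\alpha-1$ the inner and middle blocks are $o\big(t^{q+1/2}\prob[\Delta_1>t]\big)$, so the ratio to $t^{q+1/2}\prob[\Delta_1>t]$ is squeezed between $2\int_\delta^1 q x^{q-1}F(x)\,dx$ and that plus $C\delta^{q-\alpha}$, giving $d_q$; and for $q=2\alpha-1$, $\alpha>1$, where $t^{q/2}$ and $t^{q+1/2}\prob[\Delta_1>t]=\ell(t)t^{q/2}$ agree up to the slowly varying factor, I would prove instead, for every $\eta>0$, suitable $M,\delta$, and all large $t$, the two-sided bound $\big|\EE[|X_t|^q]-g_q t^{q/2}-d_q t^{q+1/2}\prob[\Delta_1>t]\big|\le\eta\big(t^{q/2}+t^{q+1/2}\prob[\Delta_1>t]\big)$, which divides out to the claimed equivalence because $g_q+d_q\ell(t)\ge\min(g_q,d_q)(1+\ell(t))>0$.
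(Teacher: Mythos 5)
Your three-block decomposition $(0,M/\sqrt t)\cup[M/\sqrt t,\delta)\cup[\delta,1)$, the use of Corollary~\ref{CLTa} (dominated convergence) on the inner block, the use of Theorem~\ref{atail} (uniform convergence) on the outer block, the identification $2\int_{(0,1]}qx^{q-1}F(x)\,dx=d_q$ via the beta integral, and the iterated-limit scheme for stitching the cases together are all the same as the paper's and are correct as far as they go. The boundary case $q=2\alpha-1$, $\alpha>1$ is also handled in the same spirit (a two-sided error bound relative to $g_qt^{q/2}+d_qt^{q+1/2}\prob[\Delta_1>t]$, which divides out since both scales agree up to $\ell(t)$).

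The genuine gap is exactly where you flag it, but it is larger than you suggest. You defer the middle block to a uniform pointwise bound
$\PP[X_t>xt]\le Ce^{-cx^2t}+C\sqrt t\,\prob[\Delta_1>cxt]$
for all $x\in(0,1]$, and you claim it is ``essentially implicit in the proof of Theorem~\ref{atail}.'' It is not: every estimate in that proof is tied to a fixed lower cutoff $\delta>0$ for $x$, and the constants it produces (e.g.\ the factors $(2/\delta)^{\alpha+1}$, the choice $h\delta t$ in the truncation levels, the repeated uses of $U_{n-1,k}\le 2/\delta$) degenerate as $\delta\downarrow0$. Nothing in that argument gives control uniformly down to $x\asymp t^{-1/2}$, which is exactly the regime the middle block inhabits. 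In fact the paper deliberately avoids proving any such pointwise bound. Instead it bounds the truncated moment $\EE\big[X_t^q\mathds{1}_{\{L\sqrt{\mu t}<X_t\le\delta t\}}\big]$ directly: using $X_t\le\sum_{k=1}^{R_n}\Delta_k$ on $\{X_t>0\}$ and $X_t\le\delta t$ to pass to the bounded sum $\sum_{k=1}^{M_{b_t}^+}\Delta_k\wedge\delta t$, conditioning on the range $M_{b_t}^+$ (whose moments grow like $t^{p/2}$ by Lemma~\ref{moment:Mn}), isolating a single summand via the covering $\{\sum\le L\sqrt{\mu t}\}\subseteq\{\Delta_m\le L\sqrt{\mu t}/2\}\cup\{\sum_{k<m}\le L\sqrt{\mu t}/2\}$, and then invoking Rosenthal's inequality to get $\Ex[(\sum_{k=1}^m\Delta_k\wedge\delta t)^q]\lesssim\max\{(\mu m)^q,m\,\Ex[(\Delta_1\wedge\delta t)^q]\}$. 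That is the entire content of Lemma~\ref{bound_aux_ql} (Lemma~\ref{bound_aux_q1} handles $q\le1$ by Jensen). Your plan is not wrong in spirit — a uniform tail bound of the form you write is morally true — but establishing it would require a separate argument of comparable difficulty to the one you are trying to avoid, and it is not available from the preliminaries as stated. As it stands, the proposal rests its central step on an unproved and non-trivial estimate.
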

Theorem \ref{annealed_moment} is proved in Section \ref{proof_amoment}
and confirms the findings (\ref{scaling}) of Burioni, Caniparoli, and
Vezzani \cite{Burioni1} for $\alpha>1$. The proof combine the annealed
central limit theorem stated by Corollary \ref{CLTa}, to describe the
typical fluctuations of $X_t$ at the spatial scale $\sqrt{t}$, with
the precise large deviation principle of Theorem \ref{atail}, to deal
with the large fluctuations of $X_t$ at the spatial scale
$t$. However, Theorem \ref{annealed_moment} is not a simple
consequence of Corollary \ref{CLTa} and Theorem \ref{atail} because
additional work is needed to establish that the contribution of the
fluctuations that are in between these two regimes is negligible. This
additional work makes use of Rosenthal's inequalities
\cite{Rosenthal1} to control the moments of a sum of i.i.d.\ random
variables.

Theorem \ref{annealed_moment} provides in particular the mean-square
displacement, i.e.\ the variance of the particle displacement. It is
clear that $\EE[X_t]=0$ for all $t>0$ due to the symmetry of the model
stated by Lemma \ref{simmetria}, so that the variance of $X_t$ is the
second moment $\EE[X_t^2]$ described by the following corollary of
Theorem \ref{annealed_moment}. Let us point out that $g_2=\mu$. A
simple calculation shows that $d_2=\sigma$ with
\begin{equation*}
  \sigma:=\frac{2^{3/2}}{\sqrt{\mu}\,\Gamma(7/2-\alpha)}\begin{cases}
  \sum_{l=0}^{+\infty}\Big[2-(2l+2)\ln(2l+2)+2\ln(2l+1)+(2l)\ln(2l)\Big] & \mbox{if }\alpha=1,\\
  \sum_{l=0}^{+\infty}\Gamma(1-\alpha)\Big[(2l+2)^\alpha-2\alpha(2l+1)^{\alpha-1}-(2l)^\alpha\Big] & \mbox{if }1<\alpha\le 3/2.
  \end{cases}
\end{equation*}
\begin{corollary}
  \label{coll_diff}
  The following asymptotic equivalence holds within the setting of
  Theorem \ref{annealed_moment}:
  \begin{equation*}
  \EE\big[X_t^2\big]\sim
  \begin{cases}
    \sigma t^{\frac{5}{2}}\prob[\Delta_1>t] & \mbox{if $1\le\alpha<3/2$},\\
    \mu t+\sigma t^{\frac{5}{2}}\prob[\Delta_1>t] & \mbox{if $\alpha=3/2$},\\
    \mu t & \mbox{if $\alpha>3/2$}.
  \end{cases}
  \end{equation*}
\end{corollary}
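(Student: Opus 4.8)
\emph{Proof strategy.} This corollary is the case $q=2$ of Theorem~\ref{annealed_moment}, so the plan is to specialize that statement and then to evaluate its two constants explicitly. Since $|X_t|^2=X_t^2$, I would first rewrite the trichotomy of Theorem~\ref{annealed_moment} in terms of $\alpha$: with $q=2$ one has $q>2\alpha-1$ exactly when $\alpha<3/2$, $q=2\alpha-1$ exactly when $\alpha=3/2$, and $q<2\alpha-1$ exactly when $\alpha>3/2$, and the degenerate sub-case ``$q=2\alpha-1$ with $\alpha=1$'' never occurs. Next I would compute $g_2=\sqrt{(2\mu)^2/\pi}\,\Gamma(3/2)=\mu$ from $\Gamma(3/2)=\sqrt{\pi}/2$; this supplies the summand $\mu t$ in the middle and bottom lines. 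The whole corollary then reduces to the single identity $d_2=\sigma$.

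To establish $d_2=\sigma$ I would start from
\begin{equation*}
 d_2=\sqrt{\frac{2}{\mu}}\,\frac{\Gamma(3-\alpha)}{\Gamma(7/2-\alpha)}\int_{(0,1)}2x\,f_\alpha(x)\,dx
   =\frac{2^{3/2}}{\sqrt{\mu}}\,\frac{\Gamma(3-\alpha)}{\Gamma(7/2-\alpha)}\int_{(0,1)}x\,f_\alpha(x)\,dx ,
\end{equation*}
which is meaningful because $2>\alpha$ throughout the relevant range $1\le\alpha\le 3/2$. Since every summand in the series (\ref{deffa}) defining $f_\alpha$ is nonnegative, I would integrate term by term by monotone convergence, noting that $\mathds{1}_{\{l<(1-x)/(2x)\}}$ is the indicator of $x\in(0,1/(2l+1))$; this gives $\int_{(0,1)}x\,f_\alpha(x)\,dx=\sum_{l\ge 0}I_l$ with
\begin{equation*}
 I_l:=\int_0^{1/(2l+1)}x\left[\left(\frac{2l+2}{1+x}\right)^\alpha-\left(\frac{2l}{1-x}\right)^\alpha\right]dx .
\end{equation*}
The remaining task is to evaluate $I_l$ in closed form, sum over $l$, and compare with $\sigma$.

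For $\alpha\neq 1$ the substitutions $u=1\pm x$ produce the elementary primitives $\int x(1+x)^{-\alpha}dx=(1+x)^{2-\alpha}/(2-\alpha)-(1+x)^{1-\alpha}/(1-\alpha)$ and $\int x(1-x)^{-\alpha}dx=(1-x)^{2-\alpha}/(2-\alpha)-(1-x)^{1-\alpha}/(1-\alpha)$; evaluating at $x=1/(2l+1)$, where $1+x=(2l+2)/(2l+1)$ and $1-x=2l/(2l+1)$, the powers of $2l+1$ recombine with the prefactors $(2l+2)^\alpha$ and $(2l)^\alpha$, and after the cancellations I expect to find
\begin{equation*}
 I_l=\frac{(2l+2)^\alpha-2\alpha\,(2l+1)^{\alpha-1}-(2l)^\alpha}{(2-\alpha)(1-\alpha)}\qquad(\alpha\neq 1).
\end{equation*}
A Taylor expansion in $1/l$ then gives $I_l=O(l^{\alpha-3})$, so the series converges for $\alpha<2$, consistently with $d_2<\infty$; and since $\Gamma(3-\alpha)=(2-\alpha)(1-\alpha)\,\Gamma(1-\alpha)$, the quotient $\Gamma(3-\alpha)/[(2-\alpha)(1-\alpha)]$ collapses to $\Gamma(1-\alpha)$ and reproduces the $1<\alpha\le 3/2$ line of $\sigma$. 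For $\alpha=1$ I would repeat the computation with the primitives $\int x(1+x)^{-1}dx=x-\ln(1+x)$ and $\int x(1-x)^{-1}dx=-x-\ln(1-x)$, obtaining $I_l=2-(2l+2)\ln(2l+2)+2\ln(2l+1)+(2l)\ln(2l)$, which with $\Gamma(2)=1$ gives the $\alpha=1$ line of $\sigma$ (this case can also be obtained as the limit $\alpha\downarrow 1$). Substituting $d_2=\sigma$ and $g_2=\mu$ into the three cases of Theorem~\ref{annealed_moment} finishes the proof.

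The genuinely routine parts---rewriting the trichotomy, the value $g_2=\mu$, the term-by-term integration, and the tail estimate $I_l=O(l^{\alpha-3})$---need no new ideas. The one place that demands care is the evaluation of $I_l$: I will have to track the powers of $2l$, $2l+1$ and $2l+2$ through both primitives and check that the four boundary terms telescope into the compact numerator $(2l+2)^\alpha-2\alpha(2l+1)^{\alpha-1}-(2l)^\alpha$, handling the point $\alpha=1$ separately since there that numerator vanishes and logarithms appear instead.
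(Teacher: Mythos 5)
Your proposal is correct and follows essentially the same route as the paper, which compresses the whole argument into the remark preceding the corollary that ``$g_2=\mu$'' and ``a simple calculus shows that $d_2=\sigma$''; your contribution is simply to supply that calculus. I verified the key closed form: with $J_\pm:=\int x(1\pm x)^{-\alpha}\,dx=(1\pm x)^{2-\alpha}/(2-\alpha)\mp(1\pm x)^{1-\alpha}/(1-\alpha)$ evaluated between $0$ and $1/(2l+1)$, the four boundary terms combine as $(2l+2)^2-(2l)^2=4(2l+1)$ and $(2l+2)-(2l)=2$, which after multiplying through by $(2-\alpha)(1-\alpha)$ collapses to $-2\alpha(2l+1)^{\alpha-1}+(2l+2)^\alpha-(2l)^\alpha$, exactly as you anticipate. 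Together with the recursion $\Gamma(3-\alpha)=(2-\alpha)(1-\alpha)\Gamma(1-\alpha)$ (interpreting $\Gamma(1-\alpha)$ by analytic continuation for $1<\alpha\le 3/2$, as the paper implicitly does) and the separate $\alpha=1$ computation via $x-\ln(1+x)$ and $-x-\ln(1-x)$, this yields $d_2=\sigma$, and the rest of the corollary is an immediate specialization of Theorem~\ref{annealed_moment}. No gap.
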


Corollary \ref{coll_diff} shows that, within the annealed framework,
the L\'evy-Lorentz gas with $\mu<+\infty$ and
$\prob[\Delta_1>\cdot\,]$ regularly varying at infinity with index
$-\alpha\le-1$ displays superdiffusive scaling of the mean-square
displacement with exponent $5/2-\alpha$ for $\alpha\in[1,3/2)$, and
  normal scaling for $\alpha\ge 3/2$. The diffusion coefficient in the
  two regimes is $\sigma$ and $\mu$, respectively.

\subsection{Quenched fluctuations and quenched moments}
\label{quenched_results}

The quenched framework is of main importance because the properties of
the system conditional on a typical realization of the environment are
directly observable in real experiments. However, to the best of our
knowledge, the quenched transport properties of the L\'evy-Lorentz gas
have never been investigated before, neither by physicists nor by
mathematicians. In this section we discuss the large fluctuations of
the process $\{X_t^\omega\}_{t\in\Rl_{\ge 0}}$ for a typical
environment $\omega$, and we determine the asymptotic behavior of its
moments.  No other hypothesis besides $\mu<+\infty$ is made here about
the probability distribution of $\Delta_1$.

An upper bound to the quenched probability of a large fluctuation of
the particle displacement is established by the following theorem,
which constitutes the third main result of the paper. The theorem
reveals that such probability decays at least as fast as a stretched
exponential with stretching exponent one-half, as a result of the
compromise between a large number of collisions and a large
fluctuation of the simple symmetric random walk. The proof is reported
in Section \ref{qtail_proof}.
\begin{theorem}
\label{qtail}
  Assume that $\mu<+\infty$. There exists a real number
  $\kappa>0$ such that the following property holds for $\pae$: for
  all $x\in(0,1]$
\begin{equation*}
  \limsup_{t\uparrow+\infty}\frac{1}{\sqrt{xt}}\ln P\big[|X_t^\omega|>xt\big]\le -\kappa.
\end{equation*}
\end{theorem}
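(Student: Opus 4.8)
The plan is to reduce a large quenched displacement to a large-deviation event for the simple symmetric random walk $S$, and to show that such an event is doubly atypical for a typical environment: either the walk performs far more than the typical number of collisions by time $t$, or it travels ballistically far in few collisions. Throughout, $n(t):=\min\{k\ge1:T_k^\omega>t\}$ denotes the index of the first collision after $t$ (finite $P$-a.s.\ by Theorem \ref{SLLN_T}).

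First I would note that at time $t$ the particle lies on the segment joining $\omega_{S_{n-1}}$ and $\omega_{S_n}$ with $n=n(t)$, so that $|X_t^\omega|\le\max\{|\omega_{S_{n-1}}|,|\omega_{S_n}|\}$. For $\prob$-a.e.\ $\omega$ the strong law of large numbers gives $\omega_r/r\to\mu$ as $r\to\pm\infty$, whence $|\omega_j|>xt$ forces $|j|>xt/(2\mu)$ once $t$ is large enough (depending on $x$ and $\omega$). Thus $\{|X_t^\omega|>xt\}\subseteq\{\max_{0\le k\le n(t)}|S_k|>xt/(2\mu)\}$, and for a deterministic threshold $M_t$ depending only on $\omega$ (to be chosen of order $t(\log t)^{3/2}$) I would split
\[
\{|X_t^\omega|>xt\}\ \subseteq\ \{n(t)>M_t\}\cup\Bigl\{\max_{0\le k\le M_t}|S_k|>\tfrac{xt}{2\mu}\Bigr\}\ =\ \{T_{M_t}^\omega\le t\}\cup\Bigl\{\max_{0\le k\le M_t}|S_k|>\tfrac{xt}{2\mu}\Bigr\}.
\]
The second event is handled by the reflection principle and Hoeffding's inequality, which bound its probability by $4\exp(-x^2t^2/(8\mu^2M_t))$; since $M_t=o(t^{3/2})$, dividing the logarithm by $\sqrt{xt}$ sends this to $-\infty$.

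The heart of the matter is the estimate of $P[T_{M_t}^\omega\le t]$. The elementary fact I would exploit is that in any block of consecutive steps the sites visited by $S$ form an interval, every edge of which is crossed at least once during the block, so the block's contribution to $T_{M_t}^\omega$ is at least $\sum_r\Delta_r^\omega$ over that interval. Partition the first $M_t$ steps into $J_t:=M_t/\ell_t$ blocks of length $\ell_t\asymp(\log t)^3$, and let $\Lambda_j$ be the range (maximum minus minimum of the positions) of $S$ within block $j$; by independence of the increments and translation invariance, $\Lambda_1,\dots,\Lambda_{J_t}$ are i.i.d.\ copies of the range of an $\ell_t$-step walk, so $P[\Lambda_j\ge\sqrt{\ell_t}]\ge q^*$ for some $q^*\in(0,1)$ and all large $t$ (by Donsker). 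On $\{T_{M_t}^\omega\le t\}$ the walk cannot reach $\pm W_t$ with $W_t:=\lceil4t/\mu\rceil$, since doing so would force the crossing of every edge between $0$ and $\pm W_t$, hence $T_{M_t}^\omega\ge|\omega_{\pm W_t}|\ge2t$ for large $t$. Combining this with a uniform law of large numbers over intervals — for $\prob$-a.e.\ $\omega$, every interval of length at least $C_\omega\log N$ inside $[-N,N]$ has $\Delta^\omega$-sum at least $(\mu/8)\cdot(\text{length})$ for all large $N$, which I would obtain by truncating $\Delta_1$, applying a Chernoff bound, and Borel--Cantelli — one gets, on $\{T_{M_t}^\omega\le t\}$ and for $t$ large, that each block with $\Lambda_j\ge\sqrt{\ell_t}$ contributes at least $(\mu/8)\sqrt{\ell_t}$ to $T_{M_t}^\omega$ (here the slack in $\ell_t$ matters: $\sqrt{\ell_t}\asymp(\log t)^{3/2}\gg\log W_t$). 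Since $T_{M_t}^\omega$ is the sum of the block contributions, on $\{T_{M_t}^\omega\le t\}$ at most $8t/(\mu\sqrt{\ell_t})$ of the blocks can have $\Lambda_j\ge\sqrt{\ell_t}$, so
\[
P\bigl[T_{M_t}^\omega\le t\bigr]\ \le\ P\Bigl[\#\{j\le J_t:\Lambda_j\ge\sqrt{\ell_t}\}\le\tfrac{8t}{\mu\sqrt{\ell_t}}\Bigr].
\]
Choosing $M_t$ so that $8t/(\mu\sqrt{\ell_t})\le\tfrac12q^*J_t$, i.e.\ $M_t\asymp t\sqrt{\ell_t}\asymp t(\log t)^{3/2}$, the right-hand side is a binomial lower tail bounded via Chernoff by $\exp(-q^*J_t/8)=\exp(-q^*M_t/(8\ell_t))=\exp(-\Theta(t/(\log t)^{3/2}))$, whose logarithm divided by $\sqrt{xt}$ also tends to $-\infty$.

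Assembling the two bounds gives $\limsup_{t\uparrow\infty}(xt)^{-1/2}\ln P[|X_t^\omega|>xt]=-\infty$ for every $x\in(0,1]$ and $\prob$-a.e.\ $\omega$, which is stronger than the stated conclusion with any fixed $\kappa_o>0$. \textbf{The main difficulty} lies in the bound on $P[T_{M_t}^\omega\le t]$: one must certify that a typical quenched environment is uniformly non-degenerate over all intervals the walk can visit, and one must calibrate the block length $\ell_t$ and the collision cutoff $M_t$ so that the Borel--Cantelli exceptional sets, the incomplete boundary blocks, and the two competing costs (reaching a far scatterer versus making too many collisions) are simultaneously controlled; the polynomial-in-$\log t$ window for $\ell_t$ is precisely what reconciles the uniform law of large numbers over intervals with beating the $\sqrt{xt}$ normalization.
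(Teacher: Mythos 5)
Your proof is correct and in fact strictly sharper than the paper's. Both arguments decompose $\{|X_t^\omega|>xt\}$ into a ``too many collisions'' event $\{T_n^\omega\le t\}$ and a ``random walk exceeds $\sim xt/\mu$ within the collision budget'' event, and both bound the second via sub-Gaussian estimates for the maximum of the simple random walk (the paper via Lemma \ref{tail:Mn} applied to $M_{b_t}^+$ and $M_{b_t}^-$, you via reflection plus Hoeffding). The essential difference is in the first event. The paper invokes Lemma \ref{Ttailq}, the quenched (Borel--Cantelli) shadow of the annealed bound $\PP[T_n\le\gamma n]\le e^{-\kappa n^{1/3}}$ of Lemma \ref{Ttailann}, which rests on the exact local-time distributions of Proposition \ref{Udist} and Lemma \ref{boundU}; choosing $b_t\asymp(xt)^{3/2}$ then balances both contributions at $e^{-\Theta(\sqrt{xt})}$, yielding a finite $\kappa_o$. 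Your block decomposition of the first $M_t$ steps, with i.i.d.\ block ranges, a quenched uniform law of large numbers over all subintervals of $[-N,N]$ of length $\gtrsim\log N$ (a separate Chernoff plus Borel--Cantelli argument), and a binomial lower-tail estimate, gives the sharper bound $P[T_{M_t}^\omega\le t]\le e^{-\Theta(t/(\log t)^{3/2})}$ with $M_t\asymp t(\log t)^{3/2}$, so that both contributions decay faster than any stretched exponential with exponent $1/2$ and the limsup is in fact $-\infty$, strictly stronger than the stated conclusion. Your route is also more self-contained, bypassing the local-time distributions and the annealed intermediate step entirely, at the modest cost of the separate Borel--Cantelli argument for the interval LLN. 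This remains consistent with the paper's remark that pure exponential decay requires a hypothesis such as $\Delta_1\ge\delta_o>0$: without it your estimate still falls a polylogarithmic factor short of pure exponential, so it improves on the theorem as stated without contradicting that remark.
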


\begin{remark}
  \label{remark_Q}
  Physicists \cite{Barkai2,Burioni1,Vezzani1} generally assume that
  $\prob[\Delta_1\ge\delta_o]=1$ for some real number $\delta_o>0$
  when dealing with the L\'evy-Lorentz gas.  This assumption allows us
  to improve the stretching exponent in Theorem \ref{qtail}, leading
  to a pure exponential decay.  In fact, it will be pointed out in
  Section \ref{qtail_proof} that under the assumption
  $\prob[\Delta_1\ge\delta_o]=1$ with $\delta_o>0$ there exists
  $\kappa>0$ such that the following property holds for $\pae$: for
  all $x\in(0,1]$
\begin{equation*}
  \limsup_{t\uparrow+\infty}\frac{1}{x^2t}\ln P\big[|X_t^\omega|>xt\big]\le -\kappa.
\end{equation*}
\end{remark}

Theorem \ref{qtail} gives only a rough bound, which needs some
hypothesis on the interdistances between scattering points at the
small spatial scales to be improved, as in Remark
\ref{remark_Q}. However, it is already sufficient to state that the
annealed approach and the quenched approach to the L\'evy-Lorentz gas
give completely different answers, and, in particular, the motion
conditional on a typical realization of the environment is no longer
superdiffusive. In fact, Theorem \ref{qtail} and Remark \ref{remark_Q}
are not compatible with a picture where a large fluctuation of the
displacement is due to a large fluctuation of one edge, to which a
polynomial decay of probabilities would correspond. They are instead
explained by conjecturing many jumps all in the same direction, as if, to
make a comparison with an annealed situation described in the
previous section, the lengths of the edges satisfied the Cram\'er's
condition. As a matter of fact, in a typical realization of the
environment the interdistances between scatterers all come out of the
same size and there is no room for long ballistic flights, so that
diffusion becomes normal.

The annealed approach and the quenched approach often provide
different answers. To name some instances of this problem we mention
the survival probability of a random walk among random traps
\cite{BenArous_transition,Gantert_survival}, the large fluctuations of
a random walk in a random environment under the slowdown regime
\cite{Pisztora_quenched,Pisztora_annealed}, the return probability of
a random walk among random conductances with a polynomial tail near
zero \cite{Berger_HK,FontesMathieu}, and the phenomenon of
intermittency for the parabolic Anderson model \cite{PAM_1,PAM_2}.
Regarding the different results of the two approaches to the
L\'evy-Lorentz gas, the point is that a typical realization of the
environment does not account for edges as long as necessary to make a
large fluctuation more probable in the domain of the \textit{principle
  of a single big jump} than in the domain of the \textit{Gibbs
  conditioning principle}. The argument that reconciles the quenched
scenario with Theorem \ref{atail} for the annealed large fluctuations
is that there are realizations of the environment that take an
arbitrarily large time to enter the exponential regime described by
Theorem \ref{qtail}. In other words, at any time there is a fraction
of sample environments, smaller and smaller as the time goes on, that
sustains long ballistic flights of the particle, whereas the shorter
inertial segments corresponding to the other samples make up a
diffusive motion.

The fact that the quenched large fluctuations are suppressed
exponentially entails that the quenched moments are determined only by
the typical fluctuations described by Theorem \ref{CLTq}, which give
normal diffusive scaling of moments. This property is established by
the following theorem, which constitutes the fourth and last main
result of the paper and is proved in Section \ref{proof_qmoments}.
\begin{theorem}
  \label{quenched_moment}
  Assume that $\mu<+\infty$.  The following asymptotic
  equivalence holds for $\pae$: for all $q>0$
  \begin{equation*}
    E\big[|X_t^\omega|^q\big]\sim g_qt^{\frac{q}{2}}=:\sqrt{\frac{(2\mu)^q}{\pi}}\,\Gamma\bigg(\frac{q+1}{2}\bigg)\,t^{\frac{q}{2}}.
  \end{equation*}
\end{theorem}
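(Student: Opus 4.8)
The plan is to upgrade the quenched central limit theorem (Theorem \ref{CLTq}) to convergence of moments by a uniform integrability argument; the heart of the matter is to control, for $\prob$-a.e.\ fixed environment $\omega$, the $P$-tail of $X_t^\omega$ at spatial scales between $\sqrt t$ and $t$. Writing $Z$ for a standard Gaussian variable, the substitution $u=z^2/2$ gives $E[|Z|^q]=2^{q/2}\Gamma((q+1)/2)/\sqrt\pi$, so that $g_qt^{q/2}=(\mu t)^{q/2}E[|Z|^q]$; by Theorem \ref{CLTq} and the continuous mapping theorem $|X_t^\omega|/\sqrt{\mu t}$ converges in distribution to $|Z|$ for $\prob$-a.e.\ $\omega$. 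Hence it suffices to show that, for $\prob$-a.e.\ $\omega$ and every $p>0$,
\begin{equation*}
\sup_{t\ge 1}\frac{E[\,|X_t^\omega|^p\,]}{t^{p/2}}<\infty .
\end{equation*}
Granted this, for each fixed $q$ the family $\{(|X_t^\omega|/\sqrt{\mu t})^q\}_{t\ge1}$ is bounded in $L^{(q+1)/q}(P)$, hence uniformly integrable, so convergence in distribution forces $E[|X_t^\omega|^q]/(\mu t)^{q/2}\to E[|Z|^q]$, which is the assertion; the $\prob$-null exceptional set can be taken independent of $q$ by intersecting the sets obtained for $p\in\mathbb N$ and using Lyapunov's inequality $\|\,\cdot\,\|_{L^q(P)}\le\|\,\cdot\,\|_{L^p(P)}$ for $q\le p$.

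For the uniform moment bound I would use the deterministic inequality $|X_t^\omega|\le M_{N_t+1}^\omega$, where $N_t:=\max\{n\ge0:T_n^\omega\le t\}$ is the number of collisions up to time $t$ (finite by Theorem \ref{SLLN_T}) and $M_m^\omega:=\max_{0\le n\le m}|\omega_{S_n}|$: for $t\in[T_{N_t}^\omega,T_{N_t+1}^\omega)$ the particle lies on the free path joining the neighbouring scatterers $\omega_{S_{N_t}}$ and $\omega_{S_{N_t+1}}$. Setting $\tau_s^\omega:=\inf\{n\ge0:|\omega_{S_n}|>s\}$, one has $\{M_{N_t+1}^\omega>s\}=\{\tau_s^\omega\le N_t+1\}$, whence for every constant $C>0$ (according to whether $N_t\le Ct$ or not)
\begin{equation*}
P[\,|X_t^\omega|>s\,]\le P[\,\tau_s^\omega\le Ct+1\,]+P[\,N_t\ge Ct\,].
\end{equation*}
The first term is a hitting-time estimate for the simple symmetric random walk: by the strong law of large numbers for the i.i.d.\ interdistances $\{\Delta_r\}$ one has $|\omega_j|\le 2\mu|j|$ for $|j|\ge j_\omega$, so when $s\ge s_\omega$ reaching a scatterer beyond distance $s$ forces the walk out of $(-s/2\mu,\,s/2\mu)$, and the reflection principle together with Hoeffding's inequality gives $P[\tau_s^\omega\le Ct+1]\le 4\,e^{-s^2/(8\mu^2(Ct+1))}$. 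Splitting $E[|X_t^\omega|^p]=p\int_0^t s^{p-1}P[|X_t^\omega|>s]\,ds$ at $s=C\sqrt t$ with $C$ a fixed constant, the part $s\le C\sqrt t$ contributes at most $C^pt^{p/2}$ by the bound $P\le1$, the hitting-time term on $s\in(C\sqrt t,t]$ contributes, after the substitution $s=y\sqrt t$ and using $t/(Ct+1)\ge1/(C+1)$, at most $4p\,t^{p/2}\int_0^\infty y^{p-1}e^{-y^2/(8\mu^2(C+1))}\,dy=O(t^{p/2})$, and the leftover is bounded by $t^p\,P[N_t\ge Ct]$. (For the genuinely ballistic window $s\in(\varepsilon t,t]$ one may instead invoke Theorem \ref{qtail}, which gives $P[|X_t^\omega|>\varepsilon t]\le e^{-(\kappa_o/2)\sqrt{\varepsilon t}}$ for $t$ large, hence a contribution $o(t^{p/2})$.)

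Everything thus reduces to proving that, for $\prob$-a.e.\ $\omega$ and every fixed $C>2/\mu$, the probability $P[N_t\ge Ct]=P[T_{\lceil Ct\rceil}^\omega\le t]$ that collisions accumulate too fast decays faster than every power of $t$; this turns $t^p\,P[N_t\ge Ct]$ into $o(t^{p/2})$ and closes the argument. Equivalently, with $m:=\lceil Ct\rceil$ one must bound $P[T_m^\omega\le(\mu/2)m]$ by a quantity super-polynomially small in $m$, a quantitative strengthening of the law of large numbers for collision times of Theorem \ref{SLLN_T}, and I expect this to be the main obstacle. The difficulty is that $\Delta_1$ is \emph{not} assumed bounded away from $0$: the naive exponential-moment bound for $T_m^\omega=\sum_r L_m^r\Delta_r^\omega$, with $L_m^r$ the number of crossings of edge $r$ in $m$ steps, fails, because the edge local times $L_m^r$ are not uniformly spread and a typical environment carries, at arbitrarily large distances, rare windows of unusually short interdistances on which the walk could squander its steps. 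The remedy should be a block decomposition of the path: restrict first, at super-polynomially small cost, to $\{\max_{k\le m}|S_k|\le\sqrt m\,\log m\}$; cut the $m$ steps into $\lfloor m/h\rfloor$ blocks of a fixed (large, $\omega$-dependent) length $h$ and observe that the internal ranges of distinct blocks are i.i.d., so a Hoeffding bound keeps a positive fraction of blocks with internal range $\ge\delta\sqrt h$; discard in addition the blocks that happen to sit on one of the sparse ``bad'' windows of $\mathbb Z$ where the interdistances fail to sum to a fixed fraction of the window length, their number being controlled because the occupation time of the random walk on such a thin set is comparably small; and bound the contribution of each surviving block from below by a uniform Karamata--Cram\'er estimate for sums of interdistances over long windows, valid for $\prob$-a.e.\ $\omega$ by Borel--Cantelli. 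Summing over the surviving blocks gives $T_m^\omega\ge c_\omega m$ with $P$-probability at least $1-e^{-c_\omega m/\log^2 m}$, which is exactly the bound needed; together with the three spatial regimes above this yields the uniform moment bound, and the theorem follows.
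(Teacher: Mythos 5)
Your high-level strategy — upgrade the quenched CLT (Theorem \ref{CLTq}) to moment convergence via a uniform moment bound and uniform integrability — is exactly the strategy of the paper (Lemma \ref{SupX} plays the role of your $\sup_{t\ge1}E[|X_t^\omega|^p]/t^{p/2}<\infty$, with the same splitting into a truncated part controlled by the CLT and a tail controlled by Markov applied to $E[|X_t^\omega|^{q+1}]$). You also correctly reduce the uniform moment bound to a super-polynomial quenched estimate on $P[T_{\lceil Ct\rceil}^\omega\le t]$, and you correctly see that this is the crux.

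The gap is precisely there, and your proposed remedy is the wrong route. You try to prove the quenched collision-time estimate \emph{directly} in a fixed environment $\omega$, which drives you into the block decomposition you sketch: controlling the density of ``bad windows'' of short interdistances, occupation times of the walk on such windows, Karamata--Cram\'er estimates per block, etc. That program is not carried out, you acknowledge it is the main obstacle, and it is far heavier than what is actually needed. The paper's Lemma \ref{Ttailq} gets the required bound $P[T_n^\omega\le\gamma n]\le e^{-\kappa n^{1/3}}$ $\pae$ in two cheap steps that sidestep all of your concerns. First, one proves the \emph{annealed} estimate (Lemma \ref{Ttailann}): conditioning on $\max_r U_{n,r}\le n^{2/3}$ (which fails with probability $\le 4n\,e^{-n^{1/3}/2}$ by the explicit local-time distribution of Lemma \ref{boundU}), the Laplace transform of $T_n=\sum_r\Delta_rU_{n,r}$ factorizes over edges after integrating out the environment, and a simple truncation of $\Delta_1$ at a fixed $\eta$ then gives $\PP[T_n\le\gamma n]\le e^{-\kappa n^{1/3}}$. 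Averaging over $\omega$ first is what makes the ``rare windows of short interdistances'' harmless: they only enter through $\Ex[e^{-\lambda\Delta_1 u}]$, which is controlled by $\mu$ alone. Second, one transfers to the quenched setting by Markov plus Borel--Cantelli: set $Z_n(\omega):=e^{\kappa n^{1/3}}P[T_n^\omega\le\gamma n]$, note $\Ex[Z_n]=e^{\kappa n^{1/3}}\PP[T_n\le\gamma n]\le e^{-\kappa n^{1/3}}$ is summable, and conclude $Z_n\to0$ $\prob$-a.s. This ``annealed estimate $\Rightarrow$ quenched estimate'' reduction is the one idea missing from your proposal; with it in hand, your argument closes exactly as you outline (and the rest of your details — the Gaussian moment identity, the reflection/Hoeffding bound for the range of the walk, and the use of Theorem \ref{qtail} on the ballistic window — are all consistent with what Lemmas \ref{tail:Mn}, \ref{moment:Mn}, and \ref{SupX} do).
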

We supplement Theorem \ref{quenched_moment} with Figure
\ref{fig:moment}, which depicts three quenched moments
$E[|X_t^\omega|^q]$ versus the time $t$ for three sample environments
$\omega$ each under the Pareto's law $\prob[\Delta_1>x]:=(x\vee
1)^{-\alpha}$ for $x>0$ with $\alpha=2$. This law, which is the law
used by physicists \cite{Barkai2,Burioni1,Vezzani1}, gives
$\mu:=\Ex[\Delta_1]=\alpha/(\alpha-1)=2$. The moment orders we
consider are $q=3=2\alpha-1$, $q=4$, and $q=5$, which entail a
superdiffusive behavior within the annealed framework according to
Theorem \ref{annealed_moment}. The figure shows that the quenched
moments exhibit an initial superdiffusive growth, which develops into
normal diffusive scaling on longer time scales as stated by Theorem
\ref{quenched_moment}. Superdiffusive scaling at the level of typical
realizations of the environment thus appears as a transient regime.

\begin{figure}
\centering
\includegraphics[width=14.5cm,height=6cm]{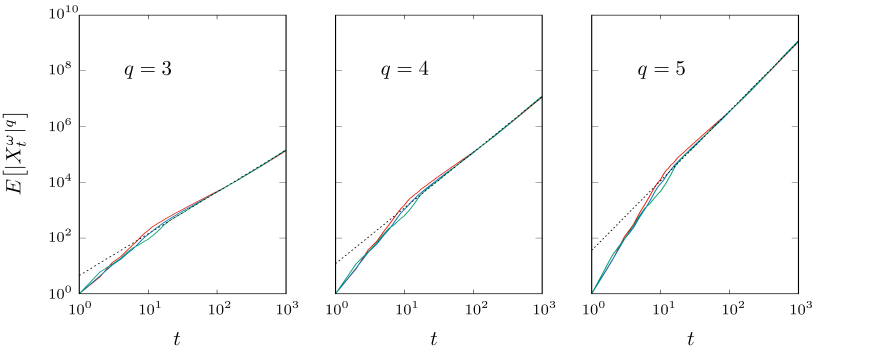}
\caption{Quenched moments $E[|X_t^\omega|^q]$ versus time $t$ in
  log-log scale for $q=3$ (left), $q=4$ (center), and $q=5$
  (right). For each $q$, three different samples $\omega$
  corresponding to different colors are considered and the black
  dotted line is the moment $g_qt^{\frac{q}{2}}$ of the Brownian
  motion rescaled by $\mu$.  Here the model is defined by the Pareto's
  law $\prob[\Delta_1>x]:=(x\vee 1)^{-\alpha}$ for $x>0$ with $\alpha=2$,
  which gives $\mu=2$, and the quenched moments are estimated by
  averaging over $10^{10}$ simulated particle trajectories.}
\label{fig:moment}
\end{figure}

In the light of our findings, asymptotic superdiffusion in individual
samples, if any, necessarily requires that the interdistances between
scattering points have infinite mean $\mu$. Whether or not the
hypothesis $\mu=+\infty$ is sufficient to observe quenched
superdiffusion is left as a completely open problem to be investigated
in future research. We conjecture that the quenched $q$-order moments
may exhibit a scaling law in time with a deterministic exponent as in
Theorem \ref{quenched_moment} even in the case $\mu=+\infty$,
similarly to what was previously found by the author \cite{Zamparo} in
the context of self-similar L\'evy processes, but this time the
coefficient $g_q$ may depend on the sample environment.  Returning to
the case $\mu<+\infty$, while identifying the nature of diffusion in
different frameworks, this work raises a new question about how the
transition from the quenched normal diffusion to the annealed
superdiffusion occurs. The transition from the annealed to the
quenched asymptotics has been addressed in other contexts, such as
sums of random exponentials \cite{BenArous_exponentials}, random walks
among random traps \cite{BenArous_transition}, and the parabolic
Anderson model \cite{PAM_transition_1,PAM_transition_2}.  In these
cases, the approach used by the authors was to average the random
field of interest over a box of increasing size with time, thus
introducing a new parameter that rules the size growth and governs the
quenched to annealed transition. As far as the L\'evy-Lorentz gas is
concerned, in order to study the quenched to annealed transition one
may consider to average the process over an increasing pool of
environment realizations. Work is in progress along this line. As a
final curious remark, we point out that large differences between
individual samples were observed in the experiments by Barthelemy,
Bertolotti, and Wiersma \cite{Barthelemy1} on the transmitted light in
L\'evy glasses, and for this reason they were prompted to average over
a set of 900 disorder realizations. This probably means that their
experimental results lie somewhere between the quenched and the
annealed regime.

\section{Preliminaries to the proofs of the main theorems}
\label{prel}

This section is devoted to some preparatory results for the proof of
Theorems \ref{atail}, \ref{annealed_moment}, \ref{qtail}, and
\ref{quenched_moment}.  For $r\in\mathbb{Z}$, let us say that the edge
$r$ is between points $r-1$ and $r$ and, for $n\ge 1$, let us denote
by $R_n$ the edge that the simple symmetric random walk covers in the
$n$th jump:
\begin{equation*}
  R_n:=\frac{1+S_{n-1}+S_n}{2}.
\end{equation*}
For each $n\ge 1$ and $r\in\mathbb{Z}$, let us consider the local time
$U_{n,r}$ on the edge $r$ given by
\begin{equation*}
  U_{n,r}:=\sum_{k=1}^n\mathds{1}_{\{R_k=r\}} ~~~~~ (U_{0,r}:=0).
\end{equation*}
The variables $R_n$ and $U_{n,r}$ are originally defined on the
probability space $(\mathcal{S},\mathfrak{S},P)$ of velocities.  We
make use of them to recast the $n$th collision time as
\begin{equation*}
  T_n^\omega:=\sum_{k=1}^n|\omega_{S_k}-\omega_{S_{k-1}}|=\sum_{k=1}^n|\omega_{R_k}-\omega_{R_k-1}|=
  \sum_{k=1}^n\Delta^\omega_{R_k}=\sum_{r=-\infty}^{+\infty}\Delta_r^\omega U_{n,r}.
\end{equation*}
Our habit of denoting with same symbols both the variables defined on
$(\mathcal{S},\mathfrak{S},P)$, or $(\Omega,\mathscr{F},\prob)$, and
their natural extensions over the product probability space
$(\Omega\times\mathcal{S},\mathscr{F}\times\mathfrak{S},\prob\times
P)$ allows us to write
$T_n=\sum_{k=1}^n\Delta_{R_k}=\sum_{r=-\infty}^{+\infty}\Delta_rU_{n,r}$. Let
us point out that this habit also justifies the writing of certain
formulas involving probability measures and projections, such as
$\PP[R_n>0]=P[R_n>0]$ or $\EE[\Delta_r]=\Ex[\Delta_r]$.

In Section \ref{jump_edge} we propose some standard bounds for the
variables $R_n$ and $U_{n,r}$, moving their proofs to sections of the
appendix.  These bounds are employed in Section \ref{coll_times} to
achieve an annealed and a quenched large deviation estimate for
collision times.

\subsection{Jumps and local time on edges}
\label{jump_edge}

Set $M_n^+:=\max\{R_1,\ldots,R_n\}$ and
$M_n^-:=\min\{R_1,\ldots,R_n\}$ for each $n\ge 1$.  The first result
we list supplies the distribution of these random variables and is
proved in Appendix \ref{proof:distMn}. Let us observe that $0\le
M_n^+\le n$, as $R_1\ge 0$ and $R_k\le k$ for all $k$, and that
$-n<M_n^-\le 1$, as $R_1\le 1$ and $R_k\ge-k$ for all $k$.  Hereafter
we make the convention that the binomial coefficient ${n\choose k}$ is
$0$ if $k>n$.
\begin{proposition}
  \label{dist:Mn}
  The following conclusions hold:
  \begin{enumerate}[$(i)$]
  \item
    $\{M_n^-\}_{n\in\mathbb{Z}_{\ge 1}}$ and
    $\{1-M_n^+\}_{n\in\mathbb{Z}_{\ge 1}}$ share the same finite-dimensional distributions under the law $P$;
  \item for every integers $n\ge 1$ and $m\ge 0$
  \begin{equation*}
    P\big[M_n^+=m\big]=\frac{1}{2^n}{n\choose\lfloor\frac{n+m+1}{2}\rfloor}.
  \end{equation*}
  \end{enumerate}
\end{proposition}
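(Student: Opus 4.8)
The plan is to reduce $M_n^+$ and $M_n^-$ to the running maximum and minimum of the walk $\{S_k\}$ itself and then to fall back on classical facts about the simple symmetric random walk. Since $|S_k-S_{k-1}|=1$ for every $k\ge 1$, the two endpoints of the $k$th edge are consecutive integers, so that $R_k=\max\{S_{k-1},S_k\}=\min\{S_{k-1},S_k\}+1$. Taking the maximum, respectively the minimum, over $k\in\{1,\dots,n\}$ then yields the identities
\[
M_n^+=\max_{0\le k\le n}S_k\qquad\text{and}\qquad M_n^-=1+\min_{0\le k\le n}S_k,
\]
valid for every $n\ge 1$ (note $S_0=0$, so $M_n^+\ge 0$). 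These two identities are the only model-specific input; the rest is classical.

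For part $(i)$ I would use that the map on $\mathcal S$ which sends a sign sequence $\{V_k\}_{k\ge 1}$ to $\{-V_k\}_{k\ge 1}$ leaves $P$ invariant, because the $V_k$ are i.i.d.\ and symmetric. Under this map $S_k\mapsto-S_k$ for all $k$ simultaneously, hence $\min_{0\le k\le n}S_k\mapsto-\max_{0\le k\le n}S_k$ for every $n$, and therefore the process $\{M_n^-\}_{n\ge 1}=\{1+\min_{0\le k\le n}S_k\}_{n\ge 1}$ has the same law as $\{1-\max_{0\le k\le n}S_k\}_{n\ge 1}=\{1-M_n^+\}_{n\ge 1}$. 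This establishes $(i)$ at the level of the whole process, not merely of the one-dimensional marginals.

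For part $(ii)$, by the first identity it suffices to compute $P[\max_{0\le k\le n}S_k=m]$. The reflection principle---reflecting a path at the first time it reaches level $m$---gives $P[\max_{0\le k\le n}S_k\ge m]=P[S_n\ge m]+P[S_n\ge m+1]$ for $m\ge 1$, whence $P[\max_{0\le k\le n}S_k=m]=P[S_n=m]+P[S_n=m+1]$; the case $m=0$ follows from $P[\max_{0\le k\le n}S_k\ge 0]=1$ together with the symmetry $P[S_n\le-1]=P[S_n\ge 1]$, and gives the same formula. Finally, writing $S_n$ as the difference between the number of $+1$'s and the number of $-1$'s among $n$ symmetric signs, one has $P[S_n=\ell]=2^{-n}{n\choose(n+\ell)/2}$ when $n+\ell$ is even and $P[S_n=\ell]=0$ otherwise; since exactly one of $m$ and $m+1$ has the parity of $n$, precisely one of the two terms is nonzero, and in either parity case the symmetry ${n\choose j}={n\choose n-j}$ rewrites the surviving term as $2^{-n}{n\choose\lfloor(n-m)/2\rfloor}$, which is the claim.

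The truly structural step is the reduction $M_n^\pm\leftrightarrow\max/\min$ of $\{S_k\}$ carried out in the first paragraph; once that is in place, $(i)$ is immediate and $(ii)$ is just the classical distribution of the running maximum of a simple random walk. The only slightly delicate bookkeeping is getting the boundary case $m=0$ of the reflection identity right and checking the two parity cases at the end of $(ii)$, but neither presents a real obstacle.
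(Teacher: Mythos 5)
Your proof is correct, but it proceeds by a genuinely different route than the paper. The key structural observation you make—that $R_k=(1+S_{k-1}+S_k)/2=\max\{S_{k-1},S_k\}=1+\min\{S_{k-1},S_k\}$, so that $M_n^+=\max_{0\le k\le n}S_k$ and $M_n^-=1+\min_{0\le k\le n}S_k$—reduces the whole proposition to classical facts about the running maximum of a simple symmetric random walk. With that reduction in hand, part $(i)$ is the sign-flip symmetry applied to $\{S_k\}$ (the paper applies the same symmetry, but directly to $\{R_n\}$ via $\{1-R_n\}\stackrel{P}{=}\{R_n\}$, which is an equivalent step), and part $(ii)$ becomes the textbook reflection-principle identity $P[\max_{0\le k\le n}S_k=m]=P[S_n=m]+P[S_n=m+1]$, after which your parity bookkeeping correctly collapses the two terms into the single binomial $\frac{1}{2^n}\binom{n}{\lfloor(n-m)/2\rfloor}$. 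The paper instead proves $(ii)$ by a first-step decomposition ($V_1$ conditioning) and induction on $n$, closing the induction with Pascal's rule; it never identifies $M_n^+$ with $\max_k S_k$ nor invokes reflection. Your approach buys conceptual transparency—it explains \emph{why} the formula holds by tying it to a well-known distribution—whereas the paper's induction is entirely self-contained and needs no external fact beyond Pascal's formula, which fits the paper's style of proving its combinatorial lemmas from scratch (cf.\ the parallel induction for Proposition~\ref{Udist}). One small remark: your treatment of the $m=0$ boundary case is compressed but correct—spelling out that $P[\max S_k=0]=1-P[S_n\ge 1]-P[S_n\ge 2]=P[S_n=0]+P[S_n=1]$ using the symmetry of $S_n$ would make the step airtight.
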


The tail distribution of $M_n^+$ is described by the following lemma,
whose proof is reported in Appendix \ref{proof:tailMn}.
\begin{lemma}
  \label{tail:Mn}
  For every integer $n\ge 1$ and real number $L>0$
  \begin{equation*}
    P\big[M_n^+>L\big]\le 2e^{-\frac{L^2}{2n}}.
  \end{equation*}
\end{lemma}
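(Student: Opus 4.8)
I would bound $P\big[M_n^+>L\big]$ by reducing to the tail of $S_n$ (reflection) and then applying the standard sub-Gaussian estimate for sums of $\pm1$ variables. No heavy machinery is needed.

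\textbf{Step 1: reduction to $S_n$.} First note that $M_n^+$ is simply the running maximum $\max_{0\le k\le n}S_k$ of the simple symmetric random walk: since $R_k=S_k$ when $V_k=1$ and $R_k=S_{k-1}$ when $V_k=-1$, one has $R_k=\max\{S_{k-1},S_k\}$, and hence $M_n^+=\max\{S_0,\dots,S_n\}$ (in accordance with $0\le M_n^+\le n$). Alternatively, one can bypass this identification and argue directly from the explicit law of Proposition \ref{dist:Mn}$(ii)$: writing $P[M_n^+\ge\ell]=2^{-n}\sum_{m=\ell}^{n}\binom{n}{\lfloor(n-m)/2\rfloor}$, reindexing by $j:=n-m$, and using that $\lfloor j/2\rfloor$ attains each nonnegative integer at most twice as $j$ runs over $\{0,1,\dots,n-\ell\}$, one obtains
\[
P\big[M_n^+\ge\ell\big]\;\le\;\frac{2}{2^n}\sum_{0\le k\le\lfloor(n-\ell)/2\rfloor}\binom{n}{k}\;=\;2\,P\big[S_n\ge\ell\big]\qquad\text{for every integer }\ell\ge1,
\]
the last equality because $(S_n+n)/2$ has the binomial law $\mathrm{Bin}(n,1/2)$. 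Either route yields the same key inequality $P[M_n^+\ge\ell]\le 2P[S_n\ge\ell]$, which is of course the classical reflection principle.

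\textbf{Step 2: Chernoff bound and conclusion.} As $S_n$ is a sum of $n$ i.i.d.\ Rademacher variables, $E[e^{\lambda S_n}]=(\cosh\lambda)^n\le e^{\lambda^2 n/2}$ for all $\lambda\in\Rl$, so Markov's inequality gives $P[S_n\ge\ell]\le e^{\lambda^2 n/2-\lambda\ell}$ and, optimizing at $\lambda=\ell/n$, $P[S_n\ge\ell]\le e^{-\ell^2/(2n)}$. Combined with Step 1 this gives $P[M_n^+\ge\ell]\le 2e^{-\ell^2/(2n)}$ for every integer $\ell\ge1$. Finally, since $M_n^+$ is integer-valued and $L>0$, the event $\{M_n^+>L\}$ is contained in $\{M_n^+\ge\lceil L\rceil\}$ with $\lceil L\rceil\ge1$, whence $P[M_n^+>L]\le 2e^{-\lceil L\rceil^2/(2n)}\le 2e^{-L^2/(2n)}$, which is the asserted bound.

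\textbf{Main obstacle.} There is essentially none: the only point that requires a moment of care is Step 1, the passage from the maximum to $S_n$, and this is the textbook reflection principle—equivalently, an elementary resummation of the binomial coefficients already produced in Proposition \ref{dist:Mn}. Everything else is the routine sub-Gaussian tail bound for $\pm1$ sums, and the reduction from integer thresholds to real $L>0$ is immediate because $M_n^+$ takes integer values.
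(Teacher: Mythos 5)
Your proof is correct, and it takes a cleaner, more standard route than the paper's. The paper also reduces to a reflection-type bound $P[M_n^+>L]\le 2^{-(n-1)}\sum_{l}\mathds{1}_{\{2l>n+L\}}\binom{n}{l}=2P[S_n>L]$ hidden in the binomial-coefficient manipulation of its display, but then it estimates that sum by an explicit calculus inequality $(1-z)^{1-z}(1+z)^{1+z}e^{-z^2}\ge 1$ applied termwise with $z=L/n$, $\zeta=2l/n$, followed by the binomial theorem --- in effect a Chernoff bound with the tilting parameter chosen implicitly. You instead make the identification $R_k=\max\{S_{k-1},S_k\}$, hence $M_n^+=\max_{0\le k\le n}S_k$, explicit (something the paper never states), then invoke the reflection principle and the textbook sub-Gaussian bound $\cosh\lambda\le e^{\lambda^2/2}$ with the optimal $\lambda=\ell/n$. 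Your second, ``bypass'' derivation via reindexing $j=n-m$ and doubling of $\lfloor j/2\rfloor$ is also correct and is essentially the same counting the paper does to get its factor of $2$. Both approaches buy the same constant; yours is shorter and uses standard machinery, while the paper's is self-contained at the level of binomial coefficients and avoids naming the reflection principle or moment generating functions. Your final reduction from integer thresholds $\ell$ to real $L>0$ via $\{M_n^+>L\}\subseteq\{M_n^+\ge\lceil L\rceil\}$ is a point worth keeping explicit, as you did.
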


The next lemma provides an upper bound for the moments of $M_n^+$ and
is demonstrated in Appendix \ref{proof:momentMn}.
\begin{lemma}
  \label{moment:Mn}
  For every integer $n\ge 1$ and real number $q>0$
  \begin{equation*}
    E\big[(M_n^+)^q\big]\le 2(qn)^{\frac{q}{2}}.
  \end{equation*}
\end{lemma}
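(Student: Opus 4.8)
The plan is to estimate the moment through the layer-cake formula combined with the Gaussian tail bound of Lemma~\ref{tail:Mn}, treating the ranges $q\ge 2$ and $0<q\le 2$ separately. Since $R_1\ge 0$ we have $M_n^+\ge 0$, and hence
\[
E\big[(M_n^+)^q\big]=\int_0^\infty qL^{q-1}\,P\big[M_n^+>L\big]\,dL .
\]

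For $q\ge 2$ I would simply insert $P[M_n^+>L]\le 2e^{-\frac{L^2}{2n}}$ from Lemma~\ref{tail:Mn} and evaluate the resulting Gaussian-type integral: the substitution $u=L^2/(2n)$ together with the identity $q\,\Gamma(q/2)=2\,\Gamma(q/2+1)$ yields
\[
E\big[(M_n^+)^q\big]\le 2\int_0^\infty qL^{q-1}e^{-\frac{L^2}{2n}}\,dL=2(2n)^{q/2}\,\Gamma\Big(\tfrac q2+1\Big).
\]
It then remains to invoke the elementary inequality $\Gamma(x+1)\le x^x$, valid for $x\ge 1$, with $x=q/2$, to get $E[(M_n^+)^q]\le 2(2n)^{q/2}(q/2)^{q/2}=2(qn)^{q/2}$. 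This inequality follows by checking that $\phi(x):=x\ln x-\ln\Gamma(x+1)$ satisfies $\phi(1)=0$ and $\phi'(x)=\ln x+1-\psi(x+1)>\ln x+1-\ln(x+1)=1-\ln(1+1/x)\ge 1-\ln 2>0$ for $x\ge 1$, where $\psi$ is the digamma function and we used the standard bound $\psi(y)<\ln y$ for $y>0$; thus $\phi\ge 0$ on $[1,\infty)$.

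For $0<q\le 2$ the map $t\mapsto t^{q/2}$ is concave on $[0,\infty)$, so Jensen's inequality reduces the claim to a bound on the second moment:
\[
E\big[(M_n^+)^q\big]=E\Big[\big((M_n^+)^2\big)^{q/2}\Big]\le\big(E[(M_n^+)^2]\big)^{q/2}.
\]
The crude Gaussian integral only gives $E[(M_n^+)^2]\le 4n$, which is a bit too large; instead I would use the truncated bound $P[M_n^+>L]\le\min\{1,2e^{-\frac{L^2}{2n}}\}$ and split $E[(M_n^+)^2]=\int_0^\infty 2L\,P[M_n^+>L]\,dL$ at the crossover $L_*:=\sqrt{2n\ln 2}$, where $2e^{-L_*^2/(2n)}=1$, obtaining
\[
E\big[(M_n^+)^2\big]\le\int_0^{L_*}2L\,dL+4\int_{L_*}^\infty L\,e^{-\frac{L^2}{2n}}\,dL=2n\ln 2+4n\,e^{-\ln 2}=2(1+\ln 2)\,n .
\]
Then $E[(M_n^+)^q]\le\big(2(1+\ln 2)n\big)^{q/2}$, and the proof closes once one verifies $\big(2(1+\ln 2)\big)^{q/2}\le 2q^{q/2}$ for all $q\in(0,2]$, equivalently $h(q):=\tfrac q2\ln\!\big(2(1+\ln 2)/q\big)\le\ln 2$. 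The function $h$ attains its maximum at $q=2(1+\ln 2)/e\in(0,2]$ with value $(1+\ln 2)/e$, so the bound holds because $(1+\ln 2)/e\le\ln 2$, i.e.\ $1\le(e-1)\ln 2$, which is true.

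I expect the real obstacle to be getting the constant exactly $2$ rather than some larger number: both the plain estimate $2(2n)^{q/2}\Gamma(q/2+1)$ and the plain bound $E[(M_n^+)^2]\le 4n$ overshoot $2(qn)^{q/2}$ for $q$ close to $1$, so the argument hinges on the elementary gamma inequality $\Gamma(x+1)\le x^x$ for $x\ge 1$ and on extracting the slightly sharper constant $2(1+\ln 2)$ for the second moment via the truncation at $L_*=\sqrt{2n\ln 2}$. The remaining steps are routine.
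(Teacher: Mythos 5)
Your proof is correct, and it takes a genuinely different route from the paper's. The paper bounds the moment generating function directly from the explicit distribution in Proposition~\ref{dist:Mn}, showing $E[e^{\lambda M_n^+}]\le 2(\cosh\lambda)^n\le 2e^{en\lambda^2/2}$, then applies the elementary inequality $z^q\le q^q e^{z-q}$ and optimizes over $\lambda$, which produces the bound $2(qn)^{q/2}$ in one shot for all $q>0$. You instead go through the layer-cake representation combined with the Gaussian tail bound of Lemma~\ref{tail:Mn}, which leads naturally to $2(2n)^{q/2}\Gamma(q/2+1)$; getting down to the stated constant then forces a case split, with $\Gamma(x+1)\le x^x$ for $x\ge 1$ handling $q\ge 2$, and a Jensen reduction to the second moment plus a truncated integral (the $L_\ast=\sqrt{2n\ln 2}$ refinement of the crude $4n$ bound) handling $0<q\le 2$. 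Both are sound: the paper's MGF route is shorter and uniform in $q$ but depends on the slightly opaque inequality $z^q\le q^qe^{z-q}$ and on a cosh estimate; yours is more modular in that it reuses the already-proved tail bound and only elementary calculus, at the cost of two cases and the observation that the crossover-truncation is needed to sharpen the constant. Your verification of $(2(1+\ln 2))^{q/2}\le 2q^{q/2}$ via the maximum of $h(q)=\tfrac{q}{2}\ln(2(1+\ln 2)/q)$ at $q=2(1+\ln 2)/e$ is correct, including the final numeric check $1\le(e-1)\ln 2$.
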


The local time $U_{n,r}$ on the edge $r$ is a random variable taking
non-negative integer values.  We point out that $U_{n,r}=0$ unless
$-n<r\le n$ and that $\sum_{r=-\infty}^{+\infty} U_{n,r}=n$. The
following proposition provides the distribution of $U_{n,r}$. The
proof is presented in Appendix \ref{proof:Udist}.
\begin{proposition}
\label{Udist}
  The following conclusions hold for every integer $n\ge 1$:
  \begin{enumerate}[(i)]
  \item $\{U_{n,1-r}\}_{r\in\mathbb{Z}}$ and $\{U_{n,r}\}_{r\in\mathbb{Z}}$ share the same finite-dimensional distributions under the law $P$;
\item for all $r\ge 1$ and $u\ge 0$
\begin{equation*}
  P\big[U_{n,r}=u\big]=
  \begin{cases}
    1-\frac{1}{2^n}\sum_{k=1}^{+\infty}{n \choose \lfloor\frac{n+r+k}{2}\rfloor} & \mbox{if }u=0,\\
    \frac{1}{2^n}{n \choose \lfloor\frac{n+r+u}{2}\rfloor} & \mbox{if }u\ge 1.
    \end{cases}
\end{equation*}
\end{enumerate}
\end{proposition}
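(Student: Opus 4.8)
The plan is to reduce both parts to the distribution of $M_n^+$ supplied by proposition \ref{dist:Mn}. The bridge is the elementary identity $R_k=\max\{S_{k-1},S_k\}$ (if $V_k=1$ the walk moves from $S_{k-1}$ up to $S_k=S_{k-1}+1=R_k$, and if $V_k=-1$ from $S_{k-1}=R_k$ down to $S_k$), which shows that $M_n^+=\max_{1\le k\le n}R_k=\max_{0\le k\le n}S_k$ is the running maximum of the walk, together with the reflection principle. Part $(i)$ is obtained exactly as part $(i)$ of proposition \ref{dist:Mn}: the identity $\{-V_n\}_{n\ge1}\stackrel[]{P}{=}\{V_n\}_{n\ge1}$ gives $\{1-R_n\}_{n\ge1}\stackrel[]{P}{=}\{R_n\}_{n\ge1}$, and since replacing each $R_k$ by $1-R_k$ turns $U_{n,r}=\sum_k\mathds{1}_{\{R_k=r\}}$ into $\sum_k\mathds{1}_{\{R_k=1-r\}}=U_{n,1-r}$, we conclude $\{U_{n,1-r}\}_{r\in\mathbb{Z}}\stackrel[]{P}{=}\{U_{n,r}\}_{r\in\mathbb{Z}}$.

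For part $(ii)$ fix $r\ge1$, so the walk starts at $0\le r-1$, i.e.\ no later than the left endpoint of edge $r$. I would first record two structural facts. Since the walk starts to the left of edge $r$, it crosses that edge if and only if it reaches the point $r$, so $\{U_{n,r}\ge1\}=\{M_n^+\ge r\}$; moreover successive crossings of edge $r$ alternate in direction, the first being an up-crossing $r-1\to r$, which takes place exactly at the first hitting time $\tau$ of the point $r$. The key step is a one-step reflection: on $\{U_{n,r}\ge1\}$ map the path to the one obtained by reflecting around the level $r$ after time $\tau$, that is by leaving $S_k$ unchanged for $k\le\tau$ and replacing it by $2r-S_k$ for $k>\tau$. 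This is a measure-preserving involution of $\{U_{n,r}\ge1\}$ which sends the crossings of edge $r$ made after time $\tau$ to crossings of edge $r+1$ and introduces no new crossing of either edge at times $\le\tau$, whence $U_{n,r+1}(\text{image})=U_{n,r}(\text{preimage})-1$. Because any path making at least $u-1\ge1$ crossings of edge $r+1$ necessarily reaches $r$, this yields $P[U_{n,r}\ge u]=P[U_{n,r+1}\ge u-1]$ for every $u\ge2$; iterating down to $u=1$ and invoking the first structural fact gives $P[U_{n,r}\ge u]=P[M_n^+\ge r+u-1]$ for all $u\ge1$.

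The two cases of the formula then follow by subtraction. For $u\ge1$ we get $P[U_{n,r}=u]=P[M_n^+\ge r+u-1]-P[M_n^+\ge r+u]=P[M_n^+=r+u-1]=\frac{1}{2^n}\binom{n}{\lfloor(n-r-u+1)/2\rfloor}$ by part $(ii)$ of proposition \ref{dist:Mn}, and a floor identity combined with $\binom{n}{j}=\binom{n}{n-j}$ rewrites the last binomial coefficient as $\binom{n}{\lfloor(n+r+u)/2\rfloor}$. For $u=0$ we get $P[U_{n,r}=0]=1-P[M_n^+\ge r]=\sum_{m=0}^{r-1}P[M_n^+=m]=\frac{1}{2^n}\sum_{m=0}^{r-1}\binom{n}{\lfloor(n-m)/2\rfloor}$; these are $r$ consecutive binomial coefficients, and a reindexing using $\binom{n}{j}=\binom{n}{n-j}$ to regroup them puts the sum into the announced form $\frac{1}{2^n}\sum_{k=1}^{r}\binom{n}{\lfloor(n+2k-r)/2\rfloor}$.

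The main obstacle is the rigor of the reflection step: one must verify that the only crossing of edge $r$ occurring at or before $\tau$ is the one at time $\tau$, that the reflected path crosses edge $r+1$ only at times strictly after $\tau$, and that the map is genuinely measure preserving — the last point is where one uses that $\tau$ is a stopping time for the i.i.d.\ sequence $\{V_n\}$, so that reflecting the increments after $\tau$ preserves the law. The remaining floor-and-binomial bookkeeping is elementary, though it requires splitting on the parity of $n-r-u$ (respectively of $n-r$).
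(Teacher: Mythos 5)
Your proof is correct, and it takes a genuinely different route from the paper's.  The paper proves part $(ii)$ by induction on $n$: conditioning on the first increment $V_1$ and exploiting the fact that $(R_2-V_1,\ldots,R_n-V_1)$ is independent of $V_1$ and distributed as $(R_1,\ldots,R_{n-1})$ yields the recursion
$P[U_{n,r}=u]=\tfrac{1}{2}P[U_{n-1,r-1}=u-\mathds{1}_{\{r=1\}}]+\tfrac{1}{2}P[U_{n-1,r+1}=u]$,
and the explicit formula is then verified by Pascal's rule. Your argument instead identifies $R_k=\max\{S_{k-1},S_k\}$ and $M_n^+$ as the running maximum of the walk, and applies a reflection at the first hitting time $\tau$ of level $r$: the resulting measure-preserving involution of $\{U_{n,r}\ge 1\}$ satisfies $U_{n,r+1}\circ\phi=U_{n,r}-1$, which after iteration gives the clean identity $P[U_{n,r}\ge u]=P[M_n^+\ge r+u-1]$ for $u\ge1$. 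This reduces the whole statement to Proposition~\ref{dist:Mn} and requires only the elementary floor arithmetic at the end. The reflection argument is rigorous (one does need, as you note, the strong Markov property to justify that negating increments after the stopping time $\tau$ preserves the law, together with the observation that any crossing of edge $r+1$ forces a prior visit to $r$), and it has the advantage of producing a transparent probabilistic interpretation of the local-time distribution in terms of the maximum, rather than obtaining the binomial coefficients as a fixed point of an inductive recursion. The paper's inductive proof, by contrast, runs entirely parallel to its proof of Proposition~\ref{dist:Mn} and needs no new probabilistic idea. Either proof is valid; yours buys conceptual clarity and reuse of the already-proved distribution of $M_n^+$, while the paper's stays inside the mechanical one-step-conditioning framework it uses throughout Section~\ref{jump_edge}.

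Two small points of bookkeeping worth making explicit: the identity $\binom{n}{\lfloor(n-r-u+1)/2\rfloor}=\binom{n}{\lfloor(n+r+u)/2\rfloor}$ follows from $\lfloor(n-m+1)/2\rfloor+\lfloor(n+m)/2\rfloor=n$ (with $m=r+u$), valid regardless of parity; and the reindexing for the $u=0$ case uses $\binom{n}{\lfloor(n-m)/2\rfloor}=\binom{n}{n-\lfloor(n-m)/2\rfloor}$ together with the substitution $k=r-m$, after which the two sums match term by term.
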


Based on the exact results of Proposition \ref{Udist}, the next lemma
provides an estimate of the probability that at least one among the
$U_{n,r}$'s with different $r$ exceeds a given threshold. It is proved
in Appendix \ref{proof:boundU}.
\begin{lemma}
\label{boundU}
For every integer $n\ge 1$ and real number $L>0$
\begin{equation*}
P\Big[\max_{r\in\mathbb{Z}}\{U_{n,r}\}>L\Big]\le 4n e^{-\frac{L^2}{2n}}.
\end{equation*}
\end{lemma}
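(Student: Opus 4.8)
The plan is to estimate the maximum by a union bound over the finitely many edges that can be visited, reduce to edges $r\ge1$ via the reflection symmetry of Proposition~\ref{Udist}$(i)$, and then bound $P[U_{n,r}>L]$ for each such $r$ from the explicit law of $U_{n,r}$ in Proposition~\ref{Udist}$(ii)$ together with the Chernoff estimate already contained in the proof of Lemma~\ref{tail:Mn}. First I would note that $U_{n,r}=0$ unless $-n<r\le n$, so that a union bound gives $P[\max_{r\in\mathbb{Z}}\{U_{n,r}\}>L]\le\sum_{r=-n+1}^{n}P[U_{n,r}>L]$; pairing each index $r\le0$ with $1-r\ge1$ and using $U_{n,1-r}\stackrel{P}{=}U_{n,r}$ from part $(i)$ of Proposition~\ref{Udist}, this is at most $2\sum_{r=1}^{n}P[U_{n,r}>L]$. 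It thus suffices to prove that $P[U_{n,r}>L]\le2e^{-L^2/(2n)}$ for every integer $r\ge1$.

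To obtain this per-edge bound I would fix $r\ge1$, set $u_o:=\lfloor L\rfloor+1\ge1$ so that $P[U_{n,r}>L]=P[U_{n,r}\ge u_o]$ because $U_{n,r}$ takes integer values, and invoke part $(ii)$ of Proposition~\ref{Udist} to write $P[U_{n,r}>L]=\frac{1}{2^n}\sum_{u\ge u_o}{n\choose\lfloor(n+r+u)/2\rfloor}$. Splitting the sum by the parity of $u$---writing $u=2v$, which turns the index into $\lfloor(n+r)/2\rfloor+v$, or $u=2v+1$, which turns it into $\lfloor(n+r+1)/2\rfloor+v$---recasts the right-hand side as a sum of two binomial tails, each of the form $\frac{1}{2^n}\sum_{j\ge j_*}{n\choose j}$. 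A short check with the floor functions shows that both starting indices exceed $(n+L)/2$: since $\lfloor(n+r)/2\rfloor\ge(n+r-1)/2$ and $\lfloor(n+r+1)/2\rfloor\ge(n+r)/2$, while the $v$-contribution is at least $\lceil u_o/2\rceil\ge u_o/2$ respectively $\lceil(u_o-1)/2\rceil\ge(u_o-1)/2$, in both cases $j_*\ge(n+r+u_o-1)/2$, which is larger than $(n+L)/2$ because $r\ge1$ and $u_o>L$. The inequality $\frac{1}{2^n}\sum_{l>(n+L)/2}{n\choose l}\le e^{-L^2/(2n)}$, established as an intermediate step in the proof of Lemma~\ref{tail:Mn} (and a form of Hoeffding's inequality for $\mathrm{Bin}(n,1/2)$), then bounds each of the two tails by $e^{-L^2/(2n)}$, so that $P[U_{n,r}>L]\le2e^{-L^2/(2n)}$ and hence $P[\max_{r\in\mathbb{Z}}\{U_{n,r}\}>L]\le4ne^{-L^2/(2n)}$.

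The one place that needs care is precisely this floor-function bookkeeping: one has to verify that, after the parity split, both binomial tails genuinely begin strictly above $(n+L)/2$ uniformly in $r\ge1$, so that neither the exponent nor the constant $4$ is degraded. A cleaner-looking but slightly weaker alternative would be to first establish, by comparing the distributions in Propositions~\ref{dist:Mn} and~\ref{Udist} term by term, the identity $P[U_{n,r}\ge u_o]=P[M_n^+\ge r+u_o-1]$ and then apply Lemma~\ref{tail:Mn}; this route, however, yields $e^{-(L-1)^2/(2n)}$ in place of $e^{-L^2/(2n)}$ and so falls short of the stated bound, which is why I would go through the raw binomial tails instead.
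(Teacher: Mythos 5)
Your proof is correct and follows essentially the same route as the paper's: union bound over the $2n$ edges with nonzero local time, the reflection symmetry of Proposition~\ref{Udist}$(i)$ to restrict to $r\ge1$, the exact law of Proposition~\ref{Udist}$(ii)$, and the same binomial-tail Chernoff estimate (\ref{tail:Mn1}) drawn from the proof of Lemma~\ref{tail:Mn}. The only difference is organizational: you establish the per-edge bound $P[U_{n,r}>L]\le 2e^{-L^2/(2n)}$ via a parity split and then sum over $r$, whereas the paper works with the combined double sum $\sum_r\sum_u$ before invoking the Chernoff estimate; the ingredients and the resulting constant $4n$ are identical.
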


\subsection{Collision times}
\label{coll_times}

The law of large numbers for collision times stated by Theorem
\ref{SLLN_T} implies that $\PP[T_n\le \gamma n]$ goes to zero as $n$
goes to infinity for each $\gamma<\mu$. We now use Lemma \ref{boundU}
to investigate how zero is approached when $\mu<+\infty$.
\begin{lemma}
\label{Ttailann}
Assume that $\mu<+\infty$. For each $\gamma<\mu$ there exists
$\kappa>0$ such that for all sufficiently large $n$
\begin{equation*}
\PP\big[T_n\le \gamma n\big]\le e^{-\kappa n^{1/3}}.
\end{equation*}
\end{lemma}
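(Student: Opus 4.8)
The plan is to combine the representation $T_n=\sum_{r\in\mathbb{Z}}\Delta_rU_{n,r}$ established above with the independence of $\{\Delta_r\}_{r\in\mathbb{Z}}$ from the simple random walk, and to show that $T_n$ is rarely much below its conditional mean $\mu n$ by an exponential concentration estimate performed conditionally on the walk. The weighted structure of the sum is essential here: the crude bound $T_n\ge\sum_{r:\,U_{n,r}\ge1}\Delta_r$, a sum of as many i.i.d.\ copies of $\Delta_1$ as there are distinct edges visited, is useless, since the simple random walk on $\mathbb{Z}$ visits only of order $\sqrt n$ distinct edges in $n$ steps -- far too few to overcome $\gamma n$; and Chebyshev's inequality applied to the conditional variance only yields a polynomial, not exponential, rate.

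First I would truncate in order to gain boundedness. As $\Ex[\min\{\Delta_1,c\}]\uparrow\mu$ when $c\uparrow\infty$ by monotone convergence and $\gamma<\mu$, fix $c>0$ so large that $\mu_c:=\Ex[\min\{\Delta_1,c\}]>\gamma$, set $Y_r:=\min\{\Delta_r,c\}$, and note $T_n\ge\widetilde T_n:=\sum_{r\in\mathbb{Z}}Y_rU_{n,r}$; it thus suffices to bound $\PP[\widetilde T_n\le\gamma n]$. With $L_n:=\lceil n^{2/3}\rceil$, split
\[
\PP\big[\widetilde T_n\le\gamma n\big]\le\PP\Big[\max_{r\in\mathbb{Z}}U_{n,r}>L_n\Big]+\PP\Big[\widetilde T_n\le\gamma n,\ \max_{r\in\mathbb{Z}}U_{n,r}\le L_n\Big].
\]
Lemma \ref{boundU} bounds the first term by $4ne^{-L_n^2/(2n)}\le4ne^{-n^{1/3}/2}$.

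For the second term I would condition on the random walk. Since $\{\Delta_r\}_{r\in\mathbb{Z}}$ is independent of the walk, for any fixed non-negative integers $(u_r)_{r\in\mathbb{Z}}$ with $\sum_ru_r=n$ the quantity $W:=\sum_rY_ru_r$ is a sum of independent random variables with $Y_ru_r\in[0,cu_r]$ and $\Ex[W]=\mu_cn$; a Chernoff bound together with Hoeffding's lemma gives, for every $\lambda>0$,
\[
\prob\big[W\le\gamma n\big]\le\exp\Big(-\lambda(\mu_c-\gamma)n+\tfrac{1}{8}\lambda^2c^2\sum_{r}u_r^2\Big),
\]
and optimizing over $\lambda$ yields $\prob[W\le\gamma n]\le\exp\big(-2(\mu_c-\gamma)^2n^2/(c^2\sum_ru_r^2)\big)$. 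When $\max_ru_r\le L_n$ one has $\sum_ru_r^2\le L_n\sum_ru_r=L_nn$, so this is at most $\exp\big(-2(\mu_c-\gamma)^2n/(c^2L_n)\big)\le\exp(-\kappa'n^{1/3})$ for some $\kappa'>0$ and all large $n$ (as $L_n\le2n^{2/3}$). Averaging over the walk on the walk-measurable event $\{\max_rU_{n,r}\le L_n\}$ transfers this bound to the second term, so that
\[
\PP\big[T_n\le\gamma n\big]\le4ne^{-n^{1/3}/2}+e^{-\kappa'n^{1/3}},
\]
which is $\le e^{-\kappa n^{1/3}}$ for all sufficiently large $n$, for any fixed $\kappa\in(0,\min\{1/2,\kappa'\})$, the polynomial factor $4n$ being absorbed into the exponential.

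The main obstacle is recognizing that counting visited edges (or Chebyshev) is too weak and that one must run an \emph{exponential} concentration inequality conditionally on the walk, using the constraint $\sum_rU_{n,r}=n$ to pin the conditional mean at $\mu_cn$ and the elementary bound $\sum_rU_{n,r}^2\le n\max_rU_{n,r}$ to reduce the required second-moment control to lemma \ref{boundU}. The stretched-exponential rate with exponent $1/3$ is then forced by balancing the two error terms, of orders $e^{-cL^2/n}$ (from lemma \ref{boundU}) and $e^{-cn/L}$ (from the conditional bound), which match at $L\asymp n^{2/3}$.
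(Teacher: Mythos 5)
Your proof is correct and takes essentially the same approach as the paper's: truncate the interdistances, split on the event $\{\max_r U_{n,r}\le L\}$ controlled by lemma \ref{boundU}, run an exponential concentration bound conditionally on the walk using the constraint $\sum_r U_{n,r}=n$, and balance the two exponents with $L\sim n^{2/3}$. The only cosmetic difference is that you invoke Hoeffding's lemma for the bounded truncated summands, whereas the paper bounds $\Ex[e^{-\lambda\Delta_1 u}]$ directly via the elementary inequality $e^{-z}\le 1-z+z^2/2$ applied to $\min\{\Delta_1,\eta\}$ — the same idea with slightly different constants.
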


\begin{proof}[Proof of Lemma \ref{Ttailann}]
Fix $n\ge 1$. Pick $\gamma<\mu$ and two real numbers $L>0$ and
$\lambda>0$. By making use of the bound $\mathds{1}_{\{T_n\le\gamma
  n\}}\le e^{\lambda\gamma n-\lambda T_n}$ and Lemma \ref{boundU} we
find
\begin{align}
  \nonumber
  \PP\big[T_n\le \gamma n\big]&\le \EE\bigg[\mathds{1}_{\{T_n\le\gamma n\}}
  \prod_{r\in\mathbb{Z}}\mathds{1}_{\{U_{n,r}\le L\}}\bigg]+P\Big[\max_{r\in\mathbb{Z}}\{U_{n,r}>L\}\Big]\\
\nonumber
&\le e^{\lambda\gamma n}\,\EE\bigg[e^{-\lambda T_n}\prod_{r\in\mathbb{Z}}\mathds{1}_{\{U_{n,r}\le L\}}\bigg]+
4n e^{-\frac{L^2}{2n}}.
\end{align}
Then, the identity $T_n=\sum_{r=-\infty}^{+\infty}\Delta_r U_{n,r}$
and Fubini's theorem yield
\begin{align}
\nonumber
\PP\big[T_n\le \gamma n\big]&\le
e^{\lambda\gamma n}\,\EE\Bigg[\prod_{r=-\infty}^{+\infty}\sum_{u=0}^{\lfloor L\rfloor}e^{-\lambda \Delta_ru}\,\mathds{1}_{\{U_{n,r}=u\}}\Bigg]
  +4n e^{-\frac{L^2}{2n}}\\
  &=e^{\lambda\gamma n}E\Bigg[\prod_{r=-\infty}^{+\infty}\sum_{u=0}^{\lfloor L\rfloor}\Ex\big[e^{-\lambda\Delta_1u}\big]\mathds{1}_{\{U_{n,r}=u\}}\Bigg]
  +4n e^{-\frac{L^2}{2n}}.
\label{boundT1}
\end{align}
The monotone convergence theorem tells us that
$\lim_{\eta\uparrow+\infty}\Ex[\min\{\Delta_1,\eta\}]=\mu$, so that
there exists $\eta>0$ with the property that
$\Ex[\min\{\Delta_1,\eta\}]\ge (\mu+\gamma)/2$ as $\gamma<\mu$. Since
$1-z\le e^{-z}\le 1-z+z^2/2$ for every $z\ge 0$, we have for each
$u\ge 0$
\begin{equation}
  \Ex\big[e^{-\lambda\Delta_1u}\big]\le\Ex\big[e^{-\lambda\min\{\Delta_1,\eta\}u}\big]
  \le 1-\frac{\lambda(\mu+\gamma)}{2}u+\frac{\lambda^2\eta^2}{2}u^2\le e^{-\lambda\frac{\mu+\gamma}{2}u+\frac{\lambda^2\eta^2}{2}u^2}.
\label{boundT2}
\end{equation}
By combining (\ref{boundT1}) with (\ref{boundT2}) and by recalling
that $\sum_{r=-\infty}^{+\infty}U_{n,r}=n$ we get
\begin{align}
\nonumber
\PP\big[T_n\le \gamma n\big]
&\le e^{\lambda\gamma n}\,E\Bigg[\prod_{r=-\infty}^{+\infty}\sum_{u=0}^{\lfloor L\rfloor}e^{-\frac{\lambda(\mu+\gamma)}{2}u+\frac{\lambda^2\eta^2L}{2}u}
  \,\mathds{1}_{\{U_{n,r}=u\}}\Bigg]
+4n e^{-\frac{L^2}{2n}}\\
\nonumber
&\le e^{\lambda\gamma n}\,E\bigg[\prod_{r=-\infty}^{+\infty}e^{-\frac{\lambda(\mu+\gamma)}{2}U_{n,r}+\frac{\lambda^2\eta^2L}{2}U_{n,r}}\bigg]
+4n e^{-\frac{L^2}{2n}}= e^{-\frac{\lambda(\mu-\gamma)}{2}n+\frac{\lambda^2\eta^2L}{2}n}+4n e^{-\frac{L^2}{2n}}.
\end{align}
At this point, by optimizing over $\lambda$, that is by taking
$\lambda=(\mu-\gamma)/(2\eta^2L)$, we find
\begin{equation*}
\PP\big[T_n\le \gamma n\big]\le e^{-\frac{(\mu-\gamma)^2}{8\eta^2L}n}+4n e^{-\frac{L^2}{2n}}.
\end{equation*}
The lemma is proved by choosing $L$ proportional to $n^{2/3}$.
\end{proof}

The same estimate of Lemma \ref{Ttailann} applies to typical
realizations of the environment, as stated by the following result.
\begin{lemma}
\label{Ttailq}
Assume that $\mu<+\infty$. For each $\gamma<\mu$ there exists
$\kappa>0$ such that the following property holds for $\pae$: for all
sufficiently large $n$
\begin{equation*}
P\big[T_n^\omega\le \gamma n\big]\le e^{-\kappa n^{1/3}}.
\end{equation*}
\end{lemma}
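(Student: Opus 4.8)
\textit{Proof proposal.} The plan is to transfer the annealed estimate of Lemma \ref{Ttailann} to $\prob$-almost every environment by a routine combination of Markov's inequality and the Borel--Cantelli lemma; the key point that makes this work is that the annealed stretched-exponential decay $e^{-\kappa n^{1/3}}$ is fast enough to survive a union bound over the environments. Fix $\gamma<\mu$ and let $\kappa>0$ be the constant supplied by Lemma \ref{Ttailann}, so that $\PP[T_n\le\gamma n]\le e^{-\kappa n^{1/3}}$ for all $n\ge n_o$, some integer $n_o$. Since $T_n$ is a measurable function on the product space whose sections over $\mathcal{S}$ are the $T_n^\omega$, Tonelli's theorem guarantees that $\omega\mapsto P[T_n^\omega\le\gamma n]$ is $\mathscr{F}$-measurable and that $\Ex[P[T_n^\omega\le\gamma n]]=\PP[T_n\le\gamma n]$.

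Then, for each $n\ge n_o$ introduce the event $A_n:=\{\omega:P[T_n^\omega\le\gamma n]>e^{-\frac{\kappa}{2}n^{1/3}}\}\in\mathscr{F}$. Applying Markov's inequality to the nonnegative random variable $\omega\mapsto P[T_n^\omega\le\gamma n]$ and then invoking the annealed bound of Lemma \ref{Ttailann} gives
\begin{equation*}
\prob[A_n]\le e^{\frac{\kappa}{2}n^{1/3}}\,\Ex\big[P[T_n^\omega\le\gamma n]\big]=e^{\frac{\kappa}{2}n^{1/3}}\,\PP[T_n\le\gamma n]\le e^{-\frac{\kappa}{2}n^{1/3}}.
\end{equation*}
Because $\sum_{n\ge n_o}e^{-\frac{\kappa}{2}n^{1/3}}<\infty$, the Borel--Cantelli lemma yields $\prob[\limsup_n A_n]=0$; equivalently, for $\prob$-a.e.\ $\omega$ there is a (random) integer $n_\omega\ge n_o$ such that $P[T_n^\omega\le\gamma n]\le e^{-\frac{\kappa}{2}n^{1/3}}$ for every $n\ge n_\omega$. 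Renaming the constant $\kappa/2$ as the $\kappa$ appearing in the statement then proves the lemma.

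I do not expect a genuine obstacle here: the only points needing a little care are the measurability of $\omega\mapsto P[T_n^\omega\le\gamma n]$, handled by Tonelli's theorem as above, and the harmless bookkeeping around the ``sufficiently large $n$'' clause in Lemma \ref{Ttailann}, which does not affect a Borel--Cantelli argument since the latter depends only on the tail of the sequence $\{\prob[A_n]\}$. One should also note that the exceptional $\prob$-null set produced in this way depends on $\gamma$, which is consistent with the statement, where $\gamma<\mu$ is fixed before the $\pae$ quantifier; if one wanted a single null set valid simultaneously for all $\gamma<\mu$ it would suffice to intersect the null sets along a sequence $\gamma_j\uparrow\mu$, but this refinement is not needed in the sequel.
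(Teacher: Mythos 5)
Your proposal is correct and is essentially the paper's own proof: the paper likewise applies Lemma \ref{Ttailann} with constant $2\kappa$, uses Fubini's theorem to identify $\Ex[P[T_n^\omega\le\gamma n]]=\PP[T_n\le\gamma n]$, and then combines Markov's inequality with the Borel--Cantelli lemma (phrased there as showing that $Z_n:=e^{\kappa n^{1/3}}P[T_n^\omega\le\gamma n]\to 0$ $\prob$-a.s.). The cosmetic differences --- writing the threshold as $e^{-\kappa n^{1/3}/2}$ rather than renaming $2\kappa\mapsto\kappa$, and making the measurability point explicit via Tonelli --- are immaterial.
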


\begin{proof}[Proof of Lemma \ref{Ttailq}]
Pick $\gamma<\mu$. Lemma \ref{Ttailann} tells us that there exists a
number $\kappa>0$ such that $\PP[T_n\le\gamma n]\le e^{-2\kappa
  n^{1/3}}$ for $n$ large enough. Let $\mathsf{Z}_n$ be the positive
random variable on $(\Omega,\mathscr{F},\prob)$ that associates each
$\omega$ with $e^{\kappa n^{1/3}}P[T_n^\omega\le \gamma n]$. By
invoking Fubini's theorem, we find for all sufficiently large $n$
\begin{equation*}
\Ex[\mathsf{Z}_n]=e^{\kappa n^{1/3}}\int_{\Omega} P\big[T_n^\omega\le\gamma n\big]\,\prob[d\omega]=e^{\kappa n^{1/3}}\,\PP\big[T_n\le\gamma n\big]\le e^{-\kappa n^{1/3}}.
\end{equation*}
This bound gives
$\sum_{n=1}^{+\infty}\prob[\mathsf{Z}_n>\epsilon]\le(1/\epsilon)\sum_{n=1}^{+\infty}\Ex[\mathsf{Z}_n]<+\infty$
for every $\epsilon>0$ thanks to Markov's inequality, which implies
$\lim_{n\uparrow+\infty}\mathsf{Z}_n=0$ $\prob$-a.s.\ by the
Borel-Cantelli lemma. The fact that $\lim_{n\uparrow+\infty}e^{\kappa
  n^{1/3}}P[T_n^\omega\le \gamma n]=0$ for $\pae$ suffices to prove
the lemma.
\end{proof}

\section{Annealed fluctuations: proof of Theorem \ref{atail}}
\label{proof:atail}

The proof of Theorem \ref{atail} is driven by the idea that a large
fluctuation of the particle displacement $X_t$ at a certain time $t$
is determined by a large fluctuation of the current edge along which
the particle is traveling. The current edge and the current velocity
are $R_{N_t}$ and $V_{N_t}$, respectively, where $N_t\ge 1$ is the
label of the first collision after time $t$:
\begin{equation*}
  N_t:=\inf\big\{n\ge 1\,:\,T_n>t\big\}.
\end{equation*}
By definition, $N_t$ is the unique positive integer that satisfies
$T_{N_t-1}\le t<T_{N_t}$, and $N_t-1$ is the number of collisions up
to time $t$.  The edge corresponding to the current edge can be
traveled back and forth several times before $t$, but ultimately its
net contribution to the particle displacement $X_t$ is
\begin{equation}
  \tau_t:=\begin{cases}
    \mathds{1}_{\{V_{N_t}=-1\}}(T_{N_t}-t)+\mathds{1}_{\{V_{N_t}=1\}}(t-T_{N_t-1}) & \mbox{if }R_{N_t}\ge 1,\\
    \mathds{1}_{\{V_{N_t}=-1\}}(t-T_{N_t-1})+\mathds{1}_{\{V_{N_t}=1\}}(T_{N_t}-t) & \mbox{if }R_{N_t}\le 0.
  \end{cases}
  \label{deftau}
\end{equation}
The edges that the particle has completely covered by time $t$ and
that together with $\tau_t$ determine the displacement $X_t$ are those
from $1$ to $R_{N_t}-1$ if $R_{N_t}>1$ and those from $R_{N_t}+1$ to
$0$ if $R_{N_t}<0$. Starting from (\ref{def_position_X}), simple
algebra actually allows one to verify that
\begin{equation}
  X_t=\begin{cases}
  +\sum_{k=1}^{S_{N_t-1}}\Delta_k+V_{N_t}(t-T_{N_t-1}) & \mbox{if } S_{N_t-1}\ge 1,\\
  -\sum_{k=S_{N_t-1}+1}^0\Delta_k+V_{N_t}(t-T_{N_t-1}) & \mbox{if }S_{N_t-1}\le 0
  \end{cases}
  =\begin{cases}
  +\sum_{k=1}^{R_{N_t}-1}\Delta_k+\tau_t & \mbox{if } R_{N_t}\ge 1,\\
  -\sum_{k=R_{N_t}+1}^0\Delta_k-\tau_t & \mbox{if }R_{N_t}\le 0.
  \end{cases}
  \label{Xtau}
\end{equation}
Hereafter we make the usual convention that an empty sum is 0 and an
empty product is 1.  We point out that the rigid constraint
$T_{N_t-1}\le t<T_{N_t}$ yields the bounds $0\le \tau_t\le
T_{N_t}-T_{N_t-1}=\Delta_{R_{N_t}}$. It follows in particular from
(\ref{Xtau}) that $0\le X_t\le \sum_{k=1}^{R_{N_t}}\Delta_k$ or
$-\sum_{k=R_{N_t}}^0\Delta_k\le X_t\le 0$ depending on whether
$R_{N_t}\ge 1$ or $R_{N_t}\le 0$.

Now that the current edge has been characterized, let us outline our
strategy to prove Theorem \ref{atail}.  Fix arbitrary real numbers
$\delta\in(0,1]$ and $h\in(1/2,1)$. Pick $x\in[\delta,1]$ and observe
  that the condition $X_t>xt>0$ implies $R_{N_t}\ge 1$, since $X_t\le
  0$ when $R_{N_t}\le 0$. Based on the bounds
  \begin{equation}
  \PP\big[\tau_t>xt,\,R_{N_t}\ge 1\big]\le\PP\big[X_t>xt\big]\le \PP\big[X_t>xt,\,\tau_t\le hxt\big]+\PP\big[\tau_t>hxt,\,R_{N_t}\ge 1\big],
\label{eq:atail_0}
  \end{equation}
we shall show that a large fluctuation of $X_t$ is entirely determined
by a large fluctuation of $\tau_t$.  Specifically, we will verify by
exact computation that the probability that $\tau_t$ takes a value of
the order of magnitude of $t$ is of the order of magnitude of
$\sqrt{t}\,\prob[\Delta_1>t]$, whereas the probability
$\PP[X_t>xt,\,\tau_t\le hxt]$ of a large fluctuation of $X_t$ that
cannot be ascribed to $\tau_t$ is negligible with respect to
$\sqrt{t}\,\prob[\Delta_1>t]$. Regarding the latter, we stress that
the conditions $X_t>xt$ and $\tau_t\le hxt$ entail $R_{N_t}\ge 2$, as
$X_t=\tau_t$ when $R_{N_t}=1$ according to (\ref{Xtau}). They also
imply $\sum_{k=1}^{R_{N_t}-1}\Delta_k=X_t-\tau_t>(1-h)xt\ge(1-h)\delta
t$. Thus, setting $\eta:=(1-h)\delta>0$ for brevity, we have
\begin{align}
\nonumber
  \PP\big[X_t>xt,\,\tau_t\le hxt\big]&\le\PP\Bigg[\sum_{k=1}^{R_{N_t}-1}\Delta_k>\eta t,\,R_{N_t}\ge 2\Bigg]\\
    \nonumber
    &\le \PP\Bigg[\sum_{k=1}^{R_{N_t}-1}\Delta_k>\eta t,\,\max\big\{\Delta_1,\ldots,\Delta_{R_{N_t}-1}\big\}\le \eta t/2,\,R_{N_t}\ge 2\Bigg]\\
    &+\PP\Big[\max\big\{\Delta_1,\ldots,\Delta_{R_{N_t}-1}\big\}>\eta t/2,\,R_{N_t}\ge 2\Big].
    \label{eq:atail_1}
\end{align}
Inequality (\ref{eq:atail_1}) constitutes our starting point to prove
that the probability $\PP[X_t>xt,\,\tau_t\le hxt]$ is negligible with
respect to $\sqrt{t}\,\prob[\Delta_1>t]$ by drawing attention to two
distinct imperatives: there must be a long gap, which cannot be
different from the current edge.  In fact, the \textit{principle of a
  single big jump} for i.i.d.\ heavy-tailed random variables suggests
that a fluctuation of $\sum_{k=1}^{R_{N_t}-1}\Delta_k$ larger than
$\eta t$ requires a comparable fluctuation of one of the summands. We
will verify this fact by showing that the first term of the r.h.s.\ of
(\ref{eq:atail_1}) is suppressed by the constraint
$\max\{\Delta_1,\ldots,\Delta_{R_{N_t}-1}\}\le \eta t/2$.  On the
other hand, as the summands are the lengths of the edges that the
particle has traveled in the past, when one of them is allowed for a
large fluctuation, i.e.\ when
$\max\{\Delta_1,\ldots,\Delta_{R_{N_t}-1}\}> \eta t/2$, the particle
experiences the circumstance of not currently being on the edge with
the largest length. This circumstance turns out to be very unlikely,
meaning that the particle spends most of the time on the current edge
after reaching it for the first time. We shall demonstrate this fact
by proving that also the second term of the r.h.s.\ of
(\ref{eq:atail_1}) is negligible. The mechanism by which the particle
remains confined to the current edge, traveling back and forth on it,
is that the many close collisions involving the particle as it leaves
the longest edge send it back very quickly.

When evaluating all terms in (\ref{eq:atail_0}) and (\ref{eq:atail_1})
we can restrict to events for which the number of collisions by time
$t$ has at most order of magnitude of $t/\mu$, as it is very unlikely
that $N_t$ takes larger values. In fact, we can cut off $N_t$ at
$b_t:=\lfloor 2t/\mu\rfloor+1$ since Lemma \ref{Ttailann} tells us
that there exists a real number $\kappa>0$ such that for all
sufficiently large $t$
  \begin{equation}
    \PP\big[N_t>b_t\big]=\PP\big[T_{b_t}\le t\big]\le\PP\big[T_{b_t}\le \mu b_t/2\big]\le e^{-\kappa b_t^{1/3}}.
\label{upper_prob_1}
  \end{equation}
By combining (\ref{eq:atail_0}) and (\ref{eq:atail_1}) we get for any
$x\in[\delta,1]$
\begin{equation}
\Psi_t(x)\le \PP\big[X_t>xt\big]\le \Theta_t+\Phi_t+\Psi_t(hx)+3\,\PP\big[N_t>b_t\big]
\label{atail_prob_LU}
\end{equation}
with
\begin{equation*}
\Theta_t:=\PP\Bigg[\sum_{k=1}^{R_{N_t}-1}\Delta_k>\eta t,\,\max\big\{\Delta_1,\ldots,\Delta_{R_{N_t}-1}\big\}\le \eta t/2,\,R_{N_t}\ge 2,\,N_t\le b_t\Bigg],
\end{equation*}
\begin{equation*}
\Phi_t:=\PP\Big[\max\big\{\Delta_1,\ldots,\Delta_{R_{N_t}-1}\big\}>\eta t/2,\,R_{N_t}\ge 2,\,N_t\le b_t\Big],
\end{equation*}
and
\begin{equation*}
\Psi_t(y):=\PP\Big[\tau_t>yt,\,R_{N_t}\ge 1,\,N_t\le b_t\Big].
\end{equation*}
In Section \ref{sec:Theta1} we shall make use of the precise large
deviation principle (\ref{LDPheavy}) to show that
\begin{equation}
\lim_{t\uparrow+\infty}\frac{\Theta_t}{\sqrt{t}\,\prob[\Delta_1>t]}=0.
\label{Theta1}
\end{equation}
In Section \ref{sec:Phi1} we will prove that
\begin{equation}
\lim_{t\uparrow+\infty}\frac{\Phi_t}{\sqrt{t}\,\prob[\Delta_1>t]}=0.
\label{Phi1}
\end{equation}
Finally, in Section \ref{dominant_term} we shall demonstrate that
\begin{equation}
  \adjustlimits\lim_{t\uparrow+\infty}\sup_{~y\in[\delta/2,1]~}\!\Bigg\{\bigg|\frac{\Psi_t(y)}{\sqrt{t}\,\prob[\Delta_1>t]}-F(y)\bigg|\Bigg\}=0,
\label{Psi1}
\end{equation}
where for each $y\in(0,1]$
\begin{equation}
  F(y):=\frac{1}{\sqrt{2\pi\mu}}\int_y^1 f_\alpha\bigg(\frac{y}{\xi}\bigg)\frac{d\xi}{\xi^\alpha\sqrt{1-\xi}},
\label{defFy}
\end{equation}
$f_\alpha$ being the function defined by (\ref{deffa}).  In Section
\ref{sec:F(hx)} we shall verify that
\begin{equation}
\sup_{x\in[\delta,1]}\Big\{F(hx)-F(x)\Big\}\le\frac{\sqrt{1-h}+h^{1-\alpha}-1}{\sqrt{\mu}\,(\delta/2)^\alpha}.
\label{F(hx)}
\end{equation}
Since the tail probability of $\Delta_1$ is dominated from below by
polynomials due to regular variation (see \cite{RV}, Proposition
1.3.6), bound (\ref{upper_prob_1}) gives
\begin{equation}
  \lim_{t\uparrow+\infty}\frac{\PP[N_t>b_t]}{\sqrt{t}\,\prob[\Delta_1>t]}=0.
\label{atail_prob_LU_1}
\end{equation}
In conclusion, (\ref{atail_prob_LU}) in combination with the limits
(\ref{Theta1}), (\ref{Phi1}), (\ref{Psi1}), and
(\ref{atail_prob_LU_1}) and the estimate (\ref{F(hx)}) shows that
\begin{equation*}
  \adjustlimits\limsup_{t\uparrow+\infty}\sup_{x\in[\delta,1]}\Bigg\{\bigg|\frac{\PP[X_t>xt]}{\sqrt{t}\,\prob[\Delta_1>t]}-F(x)\bigg|\Bigg\}
  \le\frac{\sqrt{1-h}+h^{1-\alpha}-1}{\sqrt{\mu}\,(\delta/2)^\alpha}.
\end{equation*}
Theorem \ref{atail} follows from here by sending $h$ to 1.  Limit
(\ref{Theta1}) provides a rigorous interpretation of the
\textit{principle of a single big jump} in the context of the
L\'evy-Lorentz gas by demonstrating that large fluctuations of the
particle displacement cannot be observed in the absence of large
fluctuations of a single edge. Limit (\ref{Phi1}) confirms that it is
very unlikely that there is an edge that exhibits a large fluctuation,
but that this is not the current edge.

\subsection{The limit (\ref{Theta1}) for $\Theta_t$}
\label{sec:Theta1}

The first step to prove the limit (\ref{Theta1}) is to eliminate the
random variable $N_t$ from $\Theta_t$.  Bearing in mind that
$\max\{R_1,\ldots,R_{b_t}\}=M_{b_t}^+$, for each $t>0$ we have
\begin{align}
\nonumber
\Theta_t&:=\PP\Bigg[\sum_{k=1}^{R_{N_t}-1}\Delta_k>\eta t,\,\max\big\{\Delta_1,\ldots,\Delta_{R_{N_t}-1}\big\}\le \eta t/2,\,R_{N_t}\ge 2,\,N_t\le b_t\Bigg]\\
\nonumber
&\le\PP\Bigg[\sum_{k=1}^{R_{N_t}-1}\min\big\{\Delta_k,\eta t/2\big\}>\eta t,\,R_{N_t}\ge 2,\,N_t\le b_t\Bigg]\\
&\le\PP\Bigg[\sum_{k=1}^{M_{b_t}^+}\min\big\{\Delta_k,\eta t/2\big\}>\eta t,\,M_{b_t}^+\ge 2\Bigg]\le\Theta_t'+\Theta_t''
\label{theta_bound_1}
\end{align}
with
\begin{equation*}
\Theta_t':=\PP\Bigg[\sum_{k=1}^{M_{b_t}^+}\min\big\{\Delta_k,\eta t/2\big\}>\eta t,\,\max\big\{\Delta_1,\ldots,\Delta_{M_{b_t}^+}\big\}>\eta t,\,M_{b_t}^+\ge 2\Bigg]
\end{equation*}
and
\begin{equation*}
  \Theta_t'':=\PP\Bigg[\sum_{k=1}^{M_{b_t}^+}\Delta_k>\eta t,\,\max\big\{\Delta_1,\ldots,\Delta_{M_{b_t}^+}\big\}\le \eta t,\,M_{b_t}^+\ge 1\Bigg].
\end{equation*}
The bound (\ref{theta_bound_1}) shows that (\ref{Theta1}) follows if
we demonstrate that
\begin{equation}
\lim_{t\uparrow+\infty}\frac{\Theta_t'}{\sqrt{t}\,\prob[\Delta_1>t]}=\lim_{t\uparrow+\infty}\frac{\Theta_t''}{\sqrt{t}\,\prob[\Delta_1>t]}=0.
\label{theta_bound_2}
\end{equation}

The first limit of (\ref{theta_bound_2}) is immediate.  Since the
condition $\max\{\Delta_1,\ldots,\Delta_m\}>\eta t$ with $m\ge 2$
entails that at least one variable among the i.i.d.\ random variables
$\Delta_1,\ldots,\Delta_m$ is larger that $\eta t$, we can write
\begin{align}
\nonumber
\Theta_t'&\le \sum_{m=2}^{+\infty}\sum_{l=1}^m\PP\Bigg[\sum_{k=1}^m\min\big\{\Delta_k, \eta t/2\big\}>\eta t,\,\Delta_l>\eta t,\,M_{b_t}^+=m\Bigg]\\
\nonumber
&\le\sum_{m=2}^{+\infty}m\,\PP\Bigg[\sum_{k=1}^{m-1}\Delta_k>\eta t/2,\,\Delta_m>\eta t,\,M_{b_t}^+=m\Bigg]\\
\nonumber
&=\prob\big[\Delta_1>\eta t\big] \sum_{m=2}^{+\infty}m\,\prob\Bigg[\sum_{k=1}^{m-1}\Delta_k>\eta t/2\Bigg] P\big[M_{b_t}^+=m\big].
\end{align}
At this point, an application of Markov's inequality and Lemma
\ref{moment:Mn} give for all $t$ 
\begin{equation*}
\Theta_t'\le\prob\big[\Delta_1>\eta t\big] \sum_{m=2}^{+\infty}\frac{2\mu m(m-1)}{\eta t}P\big[M_{b_t}^+=m\big]
\le \frac{2\mu E[(M_{b_t}^+)^2]}{\eta t}\,\prob\big[\Delta_1>\eta t\big]\le \frac{8\mu b_t}{\eta t}\,\prob\big[\Delta_1>\eta t\big].
\end{equation*}
The limit (\ref{theta_bound_2}) for $\Theta_t'$ follows from here
since $b_t$ is of the order of magnitude of $t$ and
$\lim_{t\uparrow+\infty}\prob[\Delta_1>\eta
  t]/\prob[\Delta_1>t]=\eta^{-\alpha}<+\infty$ by regular variation.

The second limit of (\ref{theta_bound_2}) involves the precise large
deviation principle (\ref{LDPheavy}) for the interdistances between
scatterers. In a nutshell, $\Theta_t''$ is negligible with respect to
$\sqrt{t}\,\prob[\Delta_1>t]$ because a fluctuation of
$\sum_{k=1}^{M_{b_t}^+}\Delta_k$ lager than $\eta t$ is realized by a
comparable fluctuation of one of the summands, but this is impossible
under the constraint $\max\{\Delta_1,\ldots,\Delta_{M_{b_t}^+}\}\le
\eta t$. In order to get at a rigorous proof, let us notice at first
that the condition $\max\{\Delta_1,\ldots,\Delta_{M_{b_t}^+}\}>\eta t$
entails $\sum_{k=1}^{M_{b_t}^+}\Delta_k>\eta t$, so that
\begin{equation}
  \Theta_t''=\PP\Bigg[\sum_{k=1}^{M_{b_t}^+}\Delta_k>\eta t,\,M_{b_t}^+\ge 1\Bigg]-\PP\Bigg[\max\big\{\Delta_1,\ldots,\Delta_{M_{b_t}^+}\big\}>\eta t,\,M_{b_t}^+\ge 1\Bigg].
\label{theta_primo_1}
\end{equation}
To address the first term of the r.h.s.\ of (\ref{theta_primo_1}),
pick an arbitrary real number $\epsilon\in(0,1/2)$. In the light of
(\ref{LDPheavy}) with $\gamma=\mu$ there exists an integer $m_o\ge 1$
such that for all $m\ge m_o$ and $z\ge \mu m$
\begin{equation}
\prob\Bigg[\sum_{k=1}^m\Delta_k>\mu m+z\Bigg]\le m(1+\epsilon)\,\prob\big[\Delta_1>z\big].
\label{LDPheavy_1}
\end{equation}
Set $t_o:=\mu m_o/(\epsilon\eta)$. If $t>t_o$, then $\epsilon\eta
t-\mu m_o>0$ and Lemma \ref{tail:Mn} shows that
\begin{equation*}
P\big[\mu (M_{b_t}^+\vee m_o)>\epsilon \eta t\big]\le P\big[\mu M_{b_t}^+>\epsilon \eta t-\mu m_o\big]\le 2 e^{-\frac{(\epsilon \eta t-\mu m_o)^2}{2\mu^2b_t}}.
\end{equation*}
Thus, if $m\ge 1$ satisfies $\mu (m\vee m_o)\le \epsilon \eta t$, then
$\eta t-\mu (m\vee m_o)\ge(1-\epsilon)\eta t>\epsilon \eta t\ge\mu
(m\vee m_o)$ as $\epsilon<1/2$ and (\ref{LDPheavy_1}) yields
\begin{align}
  \nonumber
  \prob\Bigg[\sum_{k=1}^m\Delta_k>\eta t\Bigg]&\le\prob\Bigg[\sum_{k=1}^{m\vee m_o}\Delta_k>\eta t\Bigg]
  \le\prob\Bigg[\sum_{k=1}^{m\vee m_o}\Delta_k>\mu(m\vee m_o)+(1-\epsilon)\eta t\Bigg]\\
\nonumber
  &\le (m\vee m_o)(1+\epsilon)\,\prob\big[\Delta_1>(1-\epsilon)\eta t\big]\le (m_o+m)(1+\epsilon)\,\prob\big[\Delta_1>(1-\epsilon)\eta t\big].
\end{align}
It follows that for every $t>t_o$
\begin{align}
  \nonumber
  \PP\Bigg[\sum_{k=1}^{M_{b_t}^+}\Delta_k>\eta t,\,M_{b_t}^+\ge1\Bigg]&
  \le\sum_{m=1}^{+\infty}\mathds{1}_{\big\{\mu (m\vee m_o)\le \epsilon \eta t\big\}}\,\prob\Bigg[\sum_{k=1}^m\Delta_k>\eta t\Bigg]P\big[M_{b_t}^+=m\big]
  +P\big[\mu (M_{b_t}^+\vee m_o)>\epsilon \eta t\big]\\
  &\le (1+\epsilon)\,\prob\big[\Delta_1>(1-\epsilon)\eta t\big]\Big(m_o+E\big[M_{b_t}^+\big]\Big)+2 e^{-\frac{(\epsilon \eta t-\mu m_o)^2}{2\mu^2b_t}}.
\label{theta_primo_2}
\end{align}
Furthermore, the independence of the interdistances and the inequality
$1-(1-z)^m\ge mz-m^2z^2$ valid for every $m\ge1$ and $z\in[0,1]$ give
for all $t>0$
\begin{align}
\nonumber
\PP\bigg[\max\big\{\Delta_1,\ldots,\Delta_{M_{b_t}^+}\big\}>\eta t,\,M_{b_t}^+\ge 1\bigg]
&=\sum_{m=1}^{+\infty}\bigg\{1-\Big(1-\prob\big[\Delta_1>\eta t\big]\Big)^m\bigg\}P\big[M_{b_t}^+=m\big]\\
&\ge E\big[M_{b_t}^+\big]\prob\big[\Delta_1>\eta t\big]-E\big[(M_{b_t}^+)^2\big]\prob\big[\Delta_1>\eta t\big]^2.
\label{theta_primo_3}
\end{align}
In conclusion, by combining (\ref{theta_primo_1}) with
(\ref{theta_primo_2}) and (\ref{theta_primo_3}) we realize that for
all $t>t_o$
\begin{align}
  \nonumber
  \Theta_t''&\le (1+\epsilon)\,\prob\big[\Delta_1>(1-\epsilon)\eta t\big]\Big(m_o+E\big[M_{b_t}^+\big]\Big)
  -E\big[M_{b_t}^+\big]\prob\big[\Delta_1>\eta t\big]\\
  \nonumber
  &+E\big[(M_{b_t}^+)^2\big]\prob\big[\Delta_1>\eta t\big]^2+2 e^{-\frac{(\epsilon \eta t-\mu m_o)^2}{2\mu^2b_t}}\\
  \nonumber
  &=m_0(1+\epsilon)\,\prob\big[\Delta_1>(1-\epsilon)\eta t\big]+\epsilon E\big[M_{b_t}^+\big]\prob\big[\Delta_1>(1-\epsilon)\eta t\big]
  +E\big[M_{b_t}^+\big]\prob\big[(1-\epsilon)\eta t<\Delta_1\le\eta t\big]\\
  \nonumber
  &+E\big[(M_{b_t}^+)^2\big]\prob\big[\Delta_1>\eta t\big]^2+2 e^{-\frac{(\epsilon \eta t-\mu m_o)^2}{2\mu^2b_t}}\\
  \nonumber
  &\le m_0(1+\epsilon)\,\prob\big[\Delta_1>(1-\epsilon)\eta t\big]+2\epsilon\sqrt{b_t}\,\prob\big[\Delta_1>(1-\epsilon)\eta t\big]
  +2\sqrt{b_t}\,\prob\big[(1-\epsilon)\eta t<\Delta_1\le\eta t\big]\\
  \nonumber
  &+\frac{4\mu b_t}{\eta t}\,\prob\big[\Delta_1>\eta t\big]+2 e^{-\frac{(\epsilon \eta t-\mu m_o)^2}{2\mu^2b_t}},
\end{align}
where we have used Lemma \ref{moment:Mn} to bound the moments of
$M_{b_t}^+$ and the Markov's inequality to deduce $\prob[\Delta_1>\eta
  t]\le\mu/(\eta t)$. Regular variation implies
$\lim_{t\uparrow+\infty}\prob[\Delta_1>\beta
  t]/\prob[\Delta_1>t]=\beta^{-\alpha}$ for any $\beta>0$, as well as
the fact that $\prob[\Delta_1>\cdot\,]$ is dominated from below by
polynomials (see \cite{RV}, Proposition 1.3.6). Then, recalling that
$b_t:=\lfloor 2t/\mu\rfloor+1$, we find
\begin{equation*}
  \limsup_{t\uparrow+\infty}\frac{\Theta_t''}{\sqrt{t}\,\prob[\Delta_1>t]}\le \eta^{-\alpha}\sqrt{\frac{8}{\mu}}
  \bigg[\frac{1+\epsilon}{(1-\epsilon)^\alpha}-1\bigg].
\end{equation*}
The arbitrariness of $\epsilon\in(0,1/2)$ proves the limit
(\ref{theta_bound_2}) for $\Theta_t''$.

\subsection{The limit (\ref{Phi1}) for $\Phi_t$}
\label{sec:Phi1}

Let us prove the limit (\ref{Phi1}) for $\Phi_t$. Since the condition
$\max\{\Delta_1,\ldots,\Delta_r\}>\eta t/2$ implies that at least one
variable among $\Delta_1,\ldots,\Delta_r$ exceeds $\eta t/2$, we can
write down the bound
\begin{align}
  \nonumber
\Phi_t&:=\PP\Big[\max\big\{\Delta_1,\ldots,\Delta_{R_{N_t}-1}\big\}>\eta t/2,\,R_{N_t}\ge 2,\,N_t\le b_t\Big]\\
\nonumber
&=\sum_{r=1}^{+\infty}\,\PP\Big[\max\big\{\Delta_1,\ldots,\Delta_r\big\}>\eta t/2,\,R_{N_t}=r+1,\,N_t\le b_t\Big]\\
&\le\sum_{r=1}^{+\infty}\sum_{k=1}^r\,\PP\Big[\Delta_k>\eta t/2,\,R_{N_t}=r+1,\,N_t\le b_t\Big]=
\sum_{k=1}^{+\infty}\,\PP\Big[\Delta_k>\eta t/2,\,R_{N_t}>k,\,N_t\le b_t\Big].
\label{phi_bound_0}
\end{align}
Because of the constraint $1\le k<R_{N_t}$, the last expression
clearly states that an edge $k$ that determines the displacement of
the particle at time $t$ contributes with a large fluctuation above
$\eta t/2$, but this edge is not the current edge $R_{N_t}$. A second
large fluctuation involving another edge is very unlikely, so that the
particle undergoes many close collisions when it leaves the edge $k$.
Actually, since the sequence $\{R_n\}_{n\in\mathbb{Z}_{>0}}$ with
increments $-1$, $0$, or $1$ cannot reach a point larger than $k\ge 1$
without going through $k$ first, the condition $1\le k<R_{N_t}$
implies that there exists a positive integer $n<N_t$ such that $R_n=k$
and $\min\{R_{n+1},\ldots,R_{N_t}\}>k$. This means that the particle
passed on the longest edge $k$ one last time at the $n$th jump and
then has no longer passed over this edge. Then, the reason why
$\Phi_t$ is negligible with respect to $\sqrt{t}\,\prob[\Delta_1>t]$
is that it is very improbable that the particle no longer comes back
to the edge $k$ as a result of the many collisions between times $T_n$
and $T_{N_t-1}$.

Starting from (\ref{phi_bound_0}), the naive implementation of these
arguments leads us to the bound
\begin{equation}
\Phi_t\le \sum_{k=1}^{+\infty}\sum_{n=1}^{b_t-1}\,\PP\bigg[\Delta_k>\eta t/2,\,R_n=k,\,\min\big\{R_{n+1},\ldots,R_{N_t}\big\}>k,\,n<N_t\le b_t\bigg].
\label{phi_bound_1}
\end{equation}
The number $N_t$ is the complex mathematical object of
(\ref{phi_bound_1}) because of its dependence on the other random
elements. With the purpose of resolving such dependence and eliminate
$N_t$, we come back to (\ref{phi_bound_0}) and do some preliminary
manipulations to (\ref{phi_bound_1}). As $N_t$ is the unique positive
integer that satisfies $T_{N_t-1}\le t<T_{N_t}$, the idea is to invert
this relation in order to predict $N_t$. To do this, let us consider
for each $n\ge 0$ a collision time $\mathsf{T}_{n,k}$ deprived of the
time that the particle spends on a given edge $k\in\mathbb{Z}$:
\begin{equation}
\mathsf{T}_{n,k}:=T_n-\Delta_kU_{n,k}=\sum_{\substack{r=-\infty\\r\ne k}}^{+\infty}\Delta_rU_{n,r}.
\label{def_T_deprived}
\end{equation}
The sequence $\{\mathsf{T}_{n,k}\}_{n\in\mathbb{Z}_{\ge 0}}$ is
independent of $\Delta_k$ and non-decreasing, the latter being a
property of local times on edges. The law of large numbers for
collision times stated by Theorem \ref{SLLN_T} suggests that
$\mathsf{T}_{n,k}/n$ may converge in probability to $\mu$ when $n$ is
sent to infinity, even if the edge $k$ is affected by a large
fluctuation.  We have
$T_{N_t-1}=\mathsf{T}_{N_t-1,k}+\Delta_kU_{N_t-1,k}$, with
$U_{N_t-1,k}=U_{N_t,k}-\mathds{1}_{\{R_{N_t}=k\}}=U_{N_t,k}$ by
definition if $k\ne R_{N_t}$, and
$T_{N_t}=\mathsf{T}_{N_t,k}+\Delta_kU_{N_t,k}$, so that the constraint
$T_{N_t-1}\le t<T_{N_t}$ reads $\mathsf{T}_{N_t-1,k}\le
t-\Delta_kU_{N_t,k}<\mathsf{T}_{N_t,k}$ if $k\ne R_{N_t}$.  The trick
to invert this relation is to replace $\mathsf{T}_{N_t-1,k}/(N_t-1)$
and $\mathsf{T}_{N_t,k}/N_t$ with $\mu$, and to show that such a
replacement is justified.  So, we pick a real number
$\epsilon\in(0,1)$ and recast (\ref{phi_bound_0}) as
\begin{equation}
  \Phi_t\le \Phi_t'+\Phi_t''
\label{Phi_primo_secondo}
\end{equation}
with
\begin{equation}
\Phi_t':=\sum_{k=1}^{+\infty}\,\PP\Bigg[\Delta_k>\eta t/2,\,\bigg|\frac{\mathsf{T}_{N_t-1,k}}{N_t-1}-\mu\bigg|\vee
    \bigg|\frac{\mathsf{T}_{N_t,k}}{N_t}-\mu\bigg|\le\epsilon\mu,\,R_{N_t}>k\Bigg]
\label{def_Phi_primo}
\end{equation}
and
\begin{equation}
\Phi_t'':=\sum_{k=1}^{+\infty}\,\PP\Bigg[\Delta_k>\eta t/2,\,\bigg|\frac{\mathsf{T}_{N_t-1,k}}{N_t-1}-\mu\bigg|\vee
    \bigg|\frac{\mathsf{T}_{N_t,k}}{N_t}-\mu\bigg|>\epsilon\mu,\,R_{N_t}>k,\,N_t\le b_t\Bigg].
\label{def_Phi_secondo}
\end{equation}
Notice that we have dropped the condition $N_t\le b_t$ when defining
$\Phi_t'$ because irrelevant.

The term $\Phi_t'$ accounts for events where
$|\mathsf{T}_{N_t-1,k}/(N_t-1)-\mu|\le \epsilon\mu$ and
$|\mathsf{T}_{N_t,k}/N_t-\mu|\le\epsilon\mu$. For these events the
constraint $\mathsf{T}_{N_t-1,k}\le
t-\Delta_kU_{N_t,k}<\mathsf{T}_{N_t,k}$ implies $\Delta_kU_{N_t,k}\le
t$ and $(1-\epsilon)\mu(N_t-1)\le t-\Delta_kU_{N_t,k}<(1+\epsilon)\mu
N_t$. The latter is tantamount to $N_t\in
(p_{t-\Delta_kU_{N_t,k}},q_{t-\Delta_kU_{N_t,k}}+1]$ with integers
  $p_z$ and $q_z$ defined for every $z\in\Rl$ by
\begin{equation}
p_z:=\bigg\lfloor\frac{z\vee 0}{(1+\epsilon)\mu}\bigg\rfloor\ge 0
\label{def:pz}
\end{equation}
and
\begin{equation}
q_z:=\bigg\lfloor\frac{z\vee 0}{(1-\epsilon)\mu}\bigg\rfloor\ge p_z.
\label{def:qz}
\end{equation}
We have thus conveniently inverted the relation $T_{N_t-1}\le
t<T_{N_t}$ and found a narrow interval that encloses $N_t$. These
reasonings allow us to deduce from (\ref{def_Phi_primo}) that for
every $t>0$
\begin{align}
\nonumber
\Phi_t'&\le\sum_{k=1}^{+\infty}\,\PP\bigg[\Delta_k>\eta t/2,\,N_t\in \big(p_{t-\Delta_kU_{N_t,k}},q_{t-\Delta_kU_{N_t,k}}+1\big],
\,\Delta_kU_{N_t,k}\le t,\,R_{N_t}>k\bigg]\\
  &\le\sum_{u=0}^{\lfloor 2/\eta\rfloor}\sum_{k=1}^{+\infty}\,\PP\bigg[\Delta_k>\eta t/2,\,N_t\in\big(p_{t-\Delta_ku},q_{t-\Delta_ku}+1\big],\,U_{N_t,k}=u,\,R_{N_t}>k\bigg].
\label{Phi_primo_1}
\end{align}
At this point, we can recover the arguments that led to
(\ref{phi_bound_1}), namely that the condition $1\le k<R_{N_t}$
entails that there exists a positive integer $n<N_t$ such that $R_n=k$
and $\min\{R_{n+1},\ldots,R_{N_t}\}>k$. This gives in particular
$U_{N_t,k}=U_{n,k}\ge 1$. The novelty is that we can now use that
constraint $N_t\in(p_{t-\Delta_ku},q_{t-\Delta_ku}+1]$ to predict
  $N_t$ and to conclude that $n$ satisfies $n\le q_{t-\Delta_ku}$,
  $R_n=k$, and $\min\{R_{n+1},\ldots,R_{n\vee
    p_{t-\Delta_ku}+1}\}>k$. Thus, we can replace (\ref{Phi_primo_1})
  with
\begin{align}
  \nonumber
\Phi_t'&\le\sum_{u=1}^{\lfloor 2/\eta\rfloor}\sum_{k=1}^{+\infty}\sum_{n=1}^{+\infty}\,\PP\bigg[\Delta_k>\eta t/2,\,n\le q_{t-\Delta_ku},
  \,U_{n,k}=u,\,R_n=k,\,\min\big\{R_{n+1},\ldots,R_{n\vee p_{t-\Delta_ku}+1}\big\}>k\bigg].
\end{align}
This bound is a version of (\ref{phi_bound_1}) where $N_t$ has been
resolved. Bearing in mind that $\Delta_k$ is distributed as
$\Delta_1$, an application of Fubini's theorem finally yields for any
$t>0$
\begin{align}
  \nonumber
\Phi_t'&\le\sum_{u=1}^{\lfloor 2/\eta\rfloor}\Ex\Bigg[\mathds{1}_{\{\Delta_1>\eta t/2\}}\sum_{n=1}^{q_{t-\Delta_1u}}\sum_{k=1}^{+\infty}\,
  P\Big[U_{n,k}=u,\,R_n=k,\,\min\big\{R_{n+1},\ldots,R_{n\vee p_{t-\Delta_1u}+1}\big\}>k\Big]\Bigg]\\
&\le\sum_{u=1}^{\lfloor 2/\eta\rfloor}\Ex\Big[\mathds{1}_{\{\Delta_1>\eta t/2\}}\,
    \mathcal{E}_u\big\{p_{t-\Delta_1 u},q_{t-\Delta_1 u}\big\}\Big],
\label{Phi_primo_2}
\end{align}
where we have set
\begin{equation}
 \mathcal{E}_u\{p,q\}:=\sum_{n=1}^q\,P\Big[U_{n,R_n}=u,\,\min\big\{R_{n+1},\ldots,R_{n\vee p+1}\big\}>R_n\Big]
\label{defEcal}
\end{equation}
for all integers $0\le p\le q$ and $u>0$.  Through
(\ref{Phi_primo_2}), we have reduced the problem to an estimate of
$\mathcal{E}_u\{p,q\}$, which only involves the properties of the
simple symmetric random walk. We let the following lemma, whose proof
is provided in Appendix \ref{proof:Ecal_estimate}, to tackle
$\mathcal{E}_u\{p,q\}$.  Basically, $\mathcal{E}_u\{p,q\}$ measures
how likely it is that, at a given time, the particle is not on an edge
it has visited a few times in the past.

\begin{lemma}
  \label{Ecal_estimate}
Let $\mathcal{E}_u\{p,q\}$ by defined by (\ref{defEcal}) for every
integers $0\le p\le q$ and $u>0$. Then
\begin{equation*}
\mathcal{E}_u\{p,q\}\le 5+\sqrt{q}-\sqrt{p+1}.
\end{equation*}
\end{lemma}

Lemma \ref{Ecal_estimate} and definitions (\ref{def:pz}) and
(\ref{def:qz}) give for every positive $u$ and $t$
\begin{equation*}
  \mathcal{E}_u\big\{p_{t-\Delta_1 u},q_{t-\Delta_1 u}\big\}\le 5+\sqrt{\frac{(t-\Delta_1 u)\vee 0}{(1-\epsilon)\mu}}-\sqrt{\frac{(t-\Delta_1 u)\vee 0}{(1+\epsilon)\mu}}
  \le  5+\bigg(\frac{1}{\sqrt{1-\epsilon}}-\frac{1}{\sqrt{1+\epsilon}}\bigg)\sqrt{\frac{t}{\mu}}.
\end{equation*}
It follows from (\ref{Phi_primo_2}) that for all $t>0$
\begin{equation}
  \Phi_t'\le \frac{10}{\eta}\,\prob\big[\Delta_1>\eta t/2\big]+\frac{2}{\eta}\bigg(\frac{1}{\sqrt{1-\epsilon}}-\frac{1}{\sqrt{1+\epsilon}}\bigg)
  \sqrt{\frac{t}{\mu}}\,\prob\big[\Delta_1>\eta t/2\big].
\label{Phi_primo_3}
\end{equation}

Let us move to $\Phi_t''$.  This term involves the deviations of the
random variables $\mathsf{T}_{n,k}/n$ from $\mu$, which are not
significant as we now show. To deal with $\Phi_t''$ it is convenient
to distinguish events that account for few collisions from the
others. Set $a_t:=\lfloor \epsilon t/\mu\rfloor$, which is an integer
that satisfies $0\le a_t<b_t:=\lfloor 2t/\mu\rfloor+1$. Events in
(\ref{def_Phi_secondo}) that account for few collisions are
interpreted as events where $N_t\le a_t$, which can be simply tackled
by noticing that
\begin{align}
  \nonumber
  \sum_{k=1}^{+\infty}\,\PP\Big[\Delta_k>\eta t/2,\,R_{N_t}>k,\,N_t\le a_t\Big]&\le \sum_{k=1}^{+\infty}\,\PP\Big[\Delta_k>\eta t/2,\,M_{a_t}^+\ge k\Big]\\
  &=\prob\big[\Delta_1>\eta t/2\big]\, E\big[M_{a_t}^+\big]\le 2\sqrt{a_t}\,\prob\big[\Delta_1>\eta t/2\big],
  \label{few_coll}
\end{align}
where we have invoked Lemma \ref{moment:Mn} to get the last
bound. Then, for all $t>0$ definition (\ref{def_Phi_secondo})
translates into
\begin{align}
  \nonumber
\Phi_t''&\le 2\sqrt{a_t}\,\prob\big[\Delta_1>\eta t/2\big]+\sum_{k=1}^{+\infty}\,\PP\Bigg[\Delta_k>\eta t/2,\,\bigg|\frac{\mathsf{T}_{N_t-1,k}}{N_t-1}-\mu\bigg|\vee
    \bigg|\frac{\mathsf{T}_{N_t,k}}{N_t}-\mu\bigg|>\epsilon\mu,\,R_{N_t}>k,\,a_t<N_t\le b_t\Bigg]\\
\nonumber
&\le 2\sqrt{a_t}\,\prob\big[\Delta_1>\eta t/2\big]+\sum_{k=1}^{+\infty}\,\PP\Bigg[\Delta_k>\eta t/2,\,\max_{a_t\le n\le b_t}
  \bigg\{\bigg|\frac{\mathsf{T}_{n,k}}{n}-\mu\bigg|\bigg\}>\epsilon\mu,\,M_{b_t}^+\ge k\Bigg].
\end{align}
Since $\{\mathsf{T}_{n,k}\}_{n\in\mathbb{Z}_{\ge 0}}$ is independent
of $\Delta_k$ for any $k$, we can rewrite the latter as
\begin{equation}
\Phi_t'' \le 2\sqrt{a_t}\,\prob\big[\Delta_1>\eta t/2\big]+\mathcal{C}_t\{\epsilon\}\,\prob\big[\Delta_1>\eta t/2\big]
\label{Phi_secondo_1}
\end{equation}
with
\begin{equation}
  \mathcal{C}_t\{\epsilon\}:=
  \sum_{k=1}^{+\infty}\,\PP\Bigg[\max_{\lfloor\epsilon t/\mu\rfloor\le n\le b_t}\bigg\{\bigg|\frac{\mathsf{T}_{n,k}}{n}-\mu\bigg|\bigg\}>\epsilon\mu,\,M_{b_t}^+\ge k\Bigg]
\label{defc}
\end{equation}
for every real numbers $\epsilon\in(0,1)$ and $t>0$. The following
lemma based on Theorem \ref{SLLN_T} shows that
$\mathcal{C}_t\{\epsilon\}$ is negligible with respect to $\sqrt{t}$,
thus confirming that $\mathsf{T}_{n,k}/n$ is close to $\mu$ in
probability irrespective of the behavior of edge $k$. We stress that
it is important here that $n$ goes to infinity when $t$ is sent to
infinity. This is the reason why we have treated separately events
that involve few collisions.
\begin{lemma}
  \label{cboundimp}
   Let $\mathcal{C}_t\{\epsilon\}$ be given by (\ref{defc}) for any
   $\epsilon\in(0,1)$ and $t>0$. Then
  \begin{equation*}
    \lim_{t\uparrow+\infty}\frac{\mathcal{C}_t\{\epsilon\}}{\sqrt{t}}=0.
    \end{equation*}
\end{lemma}

By combining (\ref{Phi_primo_secondo}), (\ref{Phi_primo_3}), and
(\ref{Phi_secondo_1}) together, Lemma \ref{cboundimp} and the limit
$\lim_{t\uparrow+\infty}\prob[\Delta_1>\eta
  t/2]/\prob[\Delta_1>t]=2^\alpha\eta^{-\alpha}$ eventually show that
for each $\epsilon\in(0,1)$
\begin{equation*}
  \limsup_{t\uparrow+\infty}\frac{\Phi_t}{\sqrt{t}\,\prob[\Delta_1>t]}\le \frac{2^{\alpha+1}}{\eta^{\alpha+1}\sqrt{\mu}}
  \bigg(\frac{1}{\sqrt{1-\epsilon}}-\frac{1}{\sqrt{1+\epsilon}}\bigg)+\frac{2^{\alpha+1}}{\eta^\alpha}\sqrt{\frac{\epsilon}{\mu}}.
\end{equation*}
We get (\ref{Phi1}) by sending $\epsilon$ to zero.  It remains to
prove Lemma \ref{cboundimp}.
\begin{proof}[Proof of Lemma \ref{cboundimp}]
  Fix $\epsilon\in(0,1)$ and a positive real number $\theta$. We are
  going to show that
  \begin{equation}
    \limsup_{t\uparrow+\infty}\frac{\mathcal{C}_t\{\epsilon\}}{\sqrt{t}}\le \sqrt{\frac{8}{\mu}}\,\prob\big[\Delta_1>\theta\big].
\label{stima_Ct}
  \end{equation}
Then, the lemma will follow from the arbitrariness of $\theta$.

Set $a_t:=\lfloor\epsilon t/\mu\rfloor$ for brevity. Recalling that
$\mathsf{T}_{n,k}:=T_n-\Delta_kU_{n,k}$ and that $U_{n,k}$ is
non-decreasing with respect to $n$, starting from definition
(\ref{defc}) we have
\begin{align}
\nonumber
\mathcal{C}_t\{\epsilon\}&=\sum_{k=1}^{+\infty}\,\PP\Bigg[
  \max_{a_t\le n\le b_t}\bigg\{\bigg|\frac{T_n}{n}-\frac{\Delta_kU_{n,k}}{n}-\mu\bigg|\bigg\}>\epsilon\mu,\,M_{b_t}^+\ge k\Bigg]\\
\nonumber
&\le\sum_{k=1}^{+\infty}\,\PP\Bigg[\sup_{n\ge a_t}\bigg\{\bigg|\frac{T_n}{n}-\mu\bigg|\bigg\}>\epsilon\mu-\frac{\Delta_kU_{b_t,k}}{a_t},\,M_{b_t}^+\ge k\Bigg].
\end{align}
By distinguishing events where $\Delta_k>\theta$ from events where
$\Delta_k\le\theta$ we realize that
\begin{align}
\nonumber
\mathcal{C}_t\{\epsilon\}&\le\sum_{k=1}^{+\infty}\,\PP\Big[\Delta_k>\theta,\,M_{b_t}^+\ge k\Big]+
\sum_{k=1}^{+\infty}\,\PP\Bigg[\sup_{n\ge a_t}\bigg\{\bigg|\frac{T_n}{n}-\mu\bigg|\bigg\}>\epsilon\mu-\frac{\theta U_{b_t,k}}{a_t},\,M_{b_t}^+\ge k\Bigg]\\
\nonumber
&= E\big[M_{b_t}^+\big]\prob\big[\Delta_1>\theta\big]+
\sum_{k=1}^{+\infty}\,\PP\Bigg[\sup_{n\ge a_t}\bigg\{\bigg|\frac{T_n}{n}-\mu\bigg|\bigg\}>\epsilon\mu-\frac{\theta U_{b_t,k}}{a_t},\,M_{b_t}^+\ge k\Bigg].
\end{align}
By distinguishing events where
$\max_{r\in\mathbb{Z}}\{U_{b_t,r}\}>b_t^{2/3}$ from events where
$\max_{r\in\mathbb{Z}}\{U_{b_t,r}\}\le b_t^{2/3}$ and by invoking
the Cauchy-Schwarz inequality we find
\begin{align}
\nonumber
\mathcal{C}_t\{\epsilon\}&\le E\big[M_{b_t}^+\big]\prob\big[\Delta_1>\theta\big]+
\sum_{k=1}^{+\infty}\,\PP\Bigg[\max_{r\in\mathbb{Z}}\{U_{b_t,r}\}>b_t^{2/3},\,M_{b_t}^+\ge k\Bigg]\\
\nonumber
&+\sum_{k=1}^{+\infty}\,\PP\Bigg[\sup_{n\ge a_t}\bigg\{\bigg|\frac{T_n}{n}-\mu\bigg|\bigg\}>\epsilon\mu-\frac{\theta b_t^{2/3}}{a_t},\,M_{b_t}^+\ge k\Bigg]\\
\nonumber
&=E\big[M_{b_t}^+\big]\prob\big[\Delta_1>\theta\big]+
E\bigg[M_{b_t}^+\,\mathds{1}_{\big\{\max_{r\in\mathbb{Z}}\{U_{b_t,r}\}>b_t^{2/3}\big\}}\bigg]\\
\nonumber
&+\EE\bigg[M_{b_t}^+\,\mathds{1}_{\big\{\sup_{n\ge a_t}\big\{\big|\frac{T_n}{n}-\mu\big|\big\}>\epsilon\mu-\frac{\theta b_t^{2/3}}{a_t}\big\}}\bigg]\\
\nonumber
&\le E\big[M_{b_t}^+\big]\prob\big[\Delta_1>\theta\big]+\sqrt{E\big[(M_{b_t}^+)^2\big]}\sqrt{P\bigg[\max_{r\in\mathbb{Z}}\{U_{b_t,r}\}>b_t^{2/3}\bigg]}\\
\nonumber
&+\sqrt{E\big[(M_{b_t}^+)^2\big]}\sqrt{\PP\Bigg[\sup_{n\ge a_t}\bigg\{\bigg|\frac{T_n}{n}-\mu\bigg|\bigg\}>\epsilon\mu-\frac{\theta b_t^{2/3}}{a_t}\Bigg]}.
\end{align}
Let us observe that $\theta b_t^{2/3}/a_t\le\epsilon\mu/2$ for all
sufficiently large $t$ as both $a_t$ and $b_t$ are of the order of
magnitude of $t$.  At this point, by making use of Lemma
\ref{moment:Mn} for estimating the moments of $M_{b_t}^+$ and of Lemma
\ref{boundU} for estimating the tail probability of
$\max_{r\in\mathbb{Z}}\{U_{b_t,r}\}$, for all sufficiently large $t$
we obtain
\begin{equation*}
\mathcal{C}_t\{\epsilon\}\le 2\sqrt{b_t}\,\prob\big[\Delta_1>\theta\big]+4b_te^{-\frac{b_t^{1/3}}{4}}
+2\sqrt{b_t}\,\sqrt{\PP\Bigg[\sup_{n\ge a_t}\bigg\{\bigg|\frac{T_n}{n}-\mu\bigg|\bigg\}>\epsilon\mu/2\Bigg]}.
\end{equation*}
This bound proves (\ref{stima_Ct}) since $b_t:=\lfloor
2t/\mu\rfloor+1$ and $\lim_{t\uparrow+\infty}\PP[\sup_{n\ge
    a_t}\{|T_n/n-\mu|\}>\epsilon\mu/2]=0$ by Theorem \ref{SLLN_T}.
\end{proof}

\subsection{The limit (\ref{Psi1}) for $\Psi_t$}
\label{dominant_term}

In this section we demonstrate by exact computation the limit
(\ref{Psi1}) for the dominant contribution to the annealed probability
of a large fluctuation. We shall show that
\begin{equation}
  \adjustlimits\lim_{t\uparrow+\infty}\sup_{~y\in[\delta/2,1]~}\!
  \Bigg\{\bigg|\frac{\Psi_t(y)}{\sqrt{t}\,\prob[\Delta_1>t]}-\frac{\Psi_t'(y)}{\sqrt{t}\,\prob[\Delta_1>t]}\bigg|\Bigg\}=0
\label{Psi_Psi_primo}
\end{equation}
and
\begin{equation}
 \adjustlimits\lim_{t\uparrow+\infty}\sup_{~y\in[\delta/2,1]~}\!\Bigg\{\bigg|\frac{\Psi_t'(y)}{\sqrt{t}\,\prob[\Delta_1>t]}-F(y)\bigg|\Bigg\}=0,
\label{Psi_primo_0}
\end{equation}
where $F(y)$ is given by (\ref{defFy}) and $\Psi_t'(y)$ is defined for
each $y\in(0,1]$ by
\begin{equation}
  \Psi_t'(y):=\sqrt{\frac{t}{2\pi\mu}}\int_0^1\sum_{l=0}^{+\infty}\,\prob\Big[(2l)\Delta_1+yt\le\xi t<(2l+2)\Delta_1-yt\Big]\frac{d\xi}{\sqrt{1-\xi}}.
\label{def_Psi_primo}
\end{equation}

The substantial work of the section is the proof of limit
(\ref{Psi_Psi_primo}), whereas limit (\ref{Psi_primo_0}) is a
straightforward consequence of the uniform convergence theorem for
slowly varying functions. The uniform convergence theorem for slowly
varying functions (see \cite{RV}, Theorem 1.2.1) tells us that
\begin{equation}
\sigma_t:=\sup_{z\in[\delta/2,1]}\Bigg\{\bigg|\frac{\ell(zt)}{\ell(t)}-1\bigg|\Bigg\}
\label{UCT}
\end{equation}
goes to zero when $t$ goes to infinity. Let us outline the proof of
(\ref{Psi_primo_0}).  Fix $y\in[\delta/2,1]$. A necessary condition
for the constraint $(2l)\Delta_1+yt\le\xi t<(2l+2)\Delta_1-yt$ to be
satisfied is $\xi>(2l+1)y$, which is understood by observing that
$(2l)\Delta_1+yt\le\xi t<(2l+2)\Delta_1-yt$ requires
$\Delta_1>yt$. Then, starting from (\ref{def_Psi_primo}) we can write
\begin{align}
\nonumber
\Psi_t'(y)&=\sqrt{\frac{t}{2\pi\mu}}\int_0^1\sum_{l=0}^{+\infty}\mathds{1}_{\{\xi>(2l+1)y\}}\,
  \prob\Big[(2l)\Delta_1+yt\le\xi t<(2l+2)\Delta_1-yt\Big]\frac{d\xi}{\sqrt{1-\xi}}\\
  \nonumber
  &=\sqrt{\frac{t}{2\pi\mu}}\int_y^1\sum_{l=0}^{+\infty}\mathds{1}_{\big\{\xi>(2l+1)y\big\}}
  \prob\Big[(2l+2)\Delta_1-yt>\xi t\Big]\frac{d\xi}{\sqrt{1-\xi}}\\
  \nonumber
&-\sqrt{\frac{t}{2\pi\mu}}\int_y^1\sum_{l=1}^{+\infty}\mathds{1}_{\big\{\xi>(2l+1)y\big\}}
\prob\Big[(2l)\Delta_1+yt>\xi t\Big]\frac{d\xi}{\sqrt{1-\xi}}\\
\nonumber
  &=\sqrt{\frac{t}{2\pi\mu}}\int_y^1\sum_{l=0}^{+\infty}\mathds{1}_{\big\{l<\frac{\xi-y}{2y}\big\}}
\prob\bigg[\Delta_1>\frac{\xi+y}{2l+2}t\bigg]\frac{d\xi}{\sqrt{1-\xi}}\\
\nonumber
&-\sqrt{\frac{t}{2\pi\mu}}\int_y^1\sum_{l=1}^{+\infty}\mathds{1}_{\big\{l<\frac{\xi-y}{2y}\big\}}
\prob\bigg[\Delta_1>\frac{\xi-y}{2l}t\bigg]\frac{d\xi}{\sqrt{1-\xi}}.
\end{align}
Notice that the conditions $\xi>(2l+1)y$ and $(2l)\Delta_1+yt>\xi t$
are not compatible when $l=0$.  This identity and (\ref{defFy}) show
that for all $t>0$
\begin{align}
  \nonumber
  \bigg|\Psi_t'(y)-\sqrt{t}\,\prob\big[\Delta_1>t\big]F(y)\bigg|
  &\le\sqrt{\frac{t}{2\pi\mu}}\int_y^1\sum_{l=0}^{+\infty}\mathds{1}_{\big\{l<\frac{\xi-y}{2y}\big\}}
\Bigg|\prob\bigg[\Delta_1>\frac{\xi+y}{2l+2}t\bigg]-\prob\big[\Delta_1>t\big]\bigg(\frac{2l+2}{\xi+y}\bigg)^\alpha\Bigg|\frac{d\xi}{\sqrt{1-\xi}}\\
&+\sqrt{\frac{t}{2\pi\mu}}\int_y^1\sum_{l=1}^{+\infty}\mathds{1}_{\big\{l<\frac{\xi-y}{2y}\big\}}
\Bigg|\prob\bigg[\Delta_1>\frac{\xi-y}{2l}t\bigg]-\prob\big[\Delta_1>t\big]\bigg(\frac{2l}{\xi-y}\bigg)^\alpha\Bigg|\frac{d\xi}{\sqrt{1-\xi}}.
\label{Psi_primo_1}
\end{align}
We now observe that the constraints $0\le l<(\xi-y)/(2y)$ and $\xi\le
1$ entail $\delta/2\le y<(\xi+y)/(2l+2)\le 1$ as
$y\in[\delta/2,1]$. It follows from definition (\ref{UCT}) that $0\le
l<(\xi-y)/(2y)$ and $\xi\le 1$ give
\begin{equation}
  \Bigg|\prob\bigg[\Delta_1>\frac{\xi+y}{2l+2}t\bigg]-\prob\big[\Delta_1>t\big]\bigg(\frac{2l+2}{\xi+y}\bigg)^\alpha\Bigg|
  =\bigg(\frac{2l+2}{\xi+y}\bigg)^\alpha t^{-\alpha}\Bigg|\ell\bigg(\frac{\xi+y}{2l+2}t\bigg)-\ell(t)\Bigg|
  \le \frac{2^\alpha\sigma_t}{\delta^\alpha}\,\prob\big[\Delta_1>t\big].
\label{Psi_primo_2}
\end{equation}
Similarly, $1\le l<(\xi-y)/(2y)$ and $\xi\le 1$ imply
$\delta/2\le y<(\xi-y)/(2l)\le 1$, so that 
\begin{equation}
  \Bigg|\prob\bigg[\Delta_1>\frac{\xi-y}{2l}t\bigg]-\prob\big[\Delta_1>t\big]\bigg(\frac{2l}{\xi-y}\bigg)^\alpha\Bigg|
  =\bigg(\frac{2l}{\xi-y}\bigg)^\alpha t^{-\alpha}\Bigg|\ell\bigg(\frac{\xi-y}{2l}t\bigg)-\ell(t)\Bigg|
  \le\frac{2^\alpha\sigma_t}{\delta^\alpha}\,\prob\big[\Delta_1>t\big].
\label{Psi_primo_3}
\end{equation}
By combining (\ref{Psi_primo_1}) with (\ref{Psi_primo_2}) and
(\ref{Psi_primo_3}) we realize that for every $t>0$
\begin{align}
  \nonumber
  \bigg|\Psi_t'(y)-\sqrt{t}\,\prob\big[\Delta_1>t\big]F(y)\bigg|
&\le \frac{2^\alpha\sigma_t}{\delta^\alpha}\,\prob\big[\Delta_1>t\big]\sqrt{\frac{t}{2\pi\mu}}
\int_y^1\frac{\xi}{y}\frac{d\xi}{\sqrt{1-\xi}}\le \frac{2^{\alpha+1}\sigma_t}{\delta^{\alpha+1}\sqrt{\mu}}\,\sqrt{t}\,\prob\big[\Delta_1>t\big].
\end{align}
The arbitrariness of $y\in[\delta/2,1]$ proves (\ref{Psi_primo_0})
since $\lim_{t\uparrow+\infty}\sigma_t=0$.

Let us move to limit (\ref{Psi_Psi_primo}).  According to definition
(\ref{deftau}) for $R_{N_t}\ge 1$, for each $y\in[\delta/2,1]$ and
$t>0$ we have
\begin{align}
  \nonumber
  \Psi_t(y)&:=\PP\Big[\tau_t>yt,\,R_{N_t}\ge 1,\,N_t\le b_t\Big]\\
\nonumber
&=\PP\bigg[\Delta_{R_{N_t}}>yt,\,\mathds{1}_{\{V_{N_t}=-1\}}(T_{N_t}-t)+\mathds{1}_{\{V_{N_t}=1\}}(t-T_{N_t-1})>yt,\,R_{N_t}\ge 1,\,N_t\le b_t\bigg]\\
\nonumber
&=\sum_{n=1}^{b_t}\,\PP\bigg[\Delta_{R_n}>yt,\,\mathds{1}_{\{V_n=-1\}}(T_n-t)+\mathds{1}_{\{V_n=1\}}(t-T_{n-1})>yt,\,T_{n-1}\le t<T_n,\,R_n\ge 1\bigg],
\end{align}
where we have stressed the fact that $\tau_t>yt$ entails
$\Delta_{R_{N_t}}>yt$. This expression can be simplified by observing
that the conditions
$\mathds{1}_{\{V_n=-1\}}(T_n-t)+\mathds{1}_{\{V_n=1\}}(t-T_{n-1})>yt$
and $T_{n-1}\le t<T_n$ imply $T_{n-1}\le
t-\mathds{1}_{\{V_n=1\}}yt<T_n-yt$ and are implied by $T_{n-1}<
t-\mathds{1}_{\{V_n=1\}}yt<T_n-yt$. Then, for all $y\in[\delta/2,1]$
and $t>0$ we can state that
\begin{equation}
  \Psi_t(y)\le\sum_{n=1}^{b_t}\,\PP\bigg[\Delta_{R_n}>yt,\,T_{n-1}\le t-\mathds{1}_{\{V_n=1\}}yt<T_n-yt,\,R_n\ge 1\bigg]
\label{Psi_upper_0}
\end{equation}
and
\begin{equation}
  \Psi_t(y)\ge\sum_{n=1}^{b_t}\,\PP\bigg[\Delta_{R_n}>yt,\,T_{n-1}<t-\mathds{1}_{\{V_n=1\}}yt<T_n-yt,\,R_n\ge 1\bigg].
\label{Psi_lower_0}
\end{equation}
Bounds (\ref{Psi_upper_0}) and (\ref{Psi_lower_0}) are the starting
point to verify (\ref{Psi_Psi_primo}). Similar to what was done with
$\Phi_t$, the first step is to resolve the dependence on collision
times $\mathsf{T}_{n-1,k}:=T_{n-1}-\Delta_kU_{n-1,k}$ and
$\mathsf{T}_{n,k}:=T_n-\Delta_kU_{n,k}$ deprived of the time that the
particle spends on the longest edge $k$, which is now the current edge
$R_n$. We notice that for every $n\ge 1$
\begin{equation}
  \mathsf{T}_{n-1,R_n}=T_{n-1}-\Delta_{R_n}U_{n-1,R_n}=T_n-\Delta_{R_n}U_{n-1,R_n}=\mathsf{T}_{n,R_n}
  \label{T_coll_Psi}
\end{equation}
as $T_n=T_{n-1}+\Delta_{R_n}$ and $U_{n,R_n}=U_{n-1,R_n}+1$, and we
expect that $\mathsf{T}_{n,R_n}/n$ is close to $\mu$ in probability
when $n$ is large. Since, basically, once the particle has reached the
current edge it spends most of the time on this edge,
$\mathsf{T}_{n,R_n}$ is the time conditional on $N_t=n$ that the
particle needs to reach the current edge from its initial position for
the first time. We are going to establish an upper bound and a lower
bound free from the random variables $\mathsf{T}_{n,R_n}$. After that,
we will able to prove (\ref{Psi_Psi_primo}) by some calculations
involving the simple symmetric random walk.

Let us begin our program by devising an upper bound. Fix a real number
$\epsilon\in(0,1/2)$, which will be kept and sent to zero in the end.
As for $\Phi_t$, we need to isolate and treat separately events that
account for few collisions. Set $a_t:=\lfloor \epsilon t/\mu\rfloor$,
which is a non-negative integer smaller than $b_t:=\lfloor
2t/\mu\rfloor+1$. Starting from (\ref{Psi_upper_0}), for all
$y\in[\delta/2,1]$ and $t>0$ we write
\begin{align}
  \nonumber
  \Psi_t(y)&\le\sum_{n=1}^{a_t}\,\PP\bigg[\Delta_{R_n}>\delta t/2,\,T_{n-1}\le t<T_n,\,R_n\ge 1\bigg]\\
\nonumber
&+\sum_{n=a_t+1}^{b_t}\PP\Bigg[\Delta_{R_n}>\delta t/2,\,\bigg|\frac{\mathsf{T}_{n,R_n}}{n}-\mu\bigg|>\epsilon\mu,\,T_{n-1}\le t<T_n,\,R_n\ge 1\Bigg]\\
&+\sum_{n=a_t+1}^{+\infty}\PP\Bigg[\Delta_{R_n}>yt,\,\bigg|\frac{\mathsf{T}_{n,R_n}}{n}-\mu\bigg|\le\epsilon\mu,\,T_{n-1}\le t-\mathds{1}_{\{V_n=1\}}yt<T_n-yt,
  \,R_n\ge 1\Bigg].
\label{Psi_upper_1}
\end{align}
Notice that we have dropped the upper bound $b_t$ in the last term.
The same arguments we used for obtaining (\ref{few_coll}) show that
\begin{align}
  \nonumber
\sum_{n=1}^{a_t}\,\PP\bigg[\Delta_{R_n}>\delta t/2,\,T_{n-1}\le t<T_n,\,R_n\ge 1\bigg]&=\sum_{k=1}^{+\infty}\,\PP\Big[\Delta_k>\delta t/2,\,R_{N_t}=k,\,N_t\le a_t\Big]\\
  \nonumber
  &\le\sum_{k=1}^{+\infty}\,\PP\Big[\Delta_k>\delta t/2,\,M_{a_t}^+\ge k\Big]=2\sqrt{a_t}\,\prob\big[\Delta_1>\delta t/2\big].
\end{align}
The same arguments we used for getting at (\ref{Phi_secondo_1}) give
\begin{align}
  \nonumber
  &\sum_{n=a_t+1}^{b_t}\PP\Bigg[\Delta_{R_n}>\delta t/2,\,\bigg|\frac{\mathsf{T}_{n,R_n}}{n}-\mu\bigg|>\epsilon\mu,\,T_{n-1}\le t<T_n,\,R_n\ge 1\Bigg]\\
  \nonumber
  &=\sum_{k=1}^{+\infty}\,\PP\Bigg[\Delta_k>\delta t/2,\,\bigg|\frac{\mathsf{T}_{N_t,k}}{N_t}-\mu\bigg|>\epsilon\mu,\,R_{N_t}=k,\,a_t<N_t\le b_t\Bigg]\\
  \nonumber
  &\le\sum_{k=1}^{+\infty}\,\PP\Bigg[\Delta_k>\delta t/2,\,\max_{a_t\le n\le b_t}
  \bigg\{\bigg|\frac{\mathsf{T}_{n,k}}{n}-\mu\bigg|\bigg\}>\epsilon\mu,\,M_{b_t}^+\ge k\Bigg]=\mathcal{C}_t\{\epsilon\}\,\prob\big[\Delta_1>\delta t/2\big]
\end{align}
with $\mathcal{C}_t\{\epsilon\}$ as in (\ref{defc}). Then, the upper
bound (\ref{Psi_upper_1}) yields for every $y\in[\delta/2,1]$ and
$t>0$
\begin{align}
  \nonumber
  \Psi_t(y)&\le 2\sqrt{a_t}\,\prob\big[\Delta_1>\delta t/2\big]+\mathcal{C}_t\{\epsilon\}\,\prob\big[\Delta_1>\delta t/2\big]\\
  &+\sum_{n=a_t+1}^{+\infty}\PP\Bigg[\Delta_{R_n}>yt,\,\bigg|\frac{\mathsf{T}_{n,R_n}}{n}-\mu\bigg|\le\epsilon\mu,\,T_{n-1}\le t-\mathds{1}_{\{V_n=1\}}yt<T_n-yt,
    \,R_n\ge 1\Bigg].
\label{Psi_upper_2}
\end{align}
We are now in the position to replace $\mathsf{T}_{n,R_n}/n$ with
$\mu$ in the last term. The constraint $T_{n-1}\le
t-\mathds{1}_{\{V_n=1\}}yt<T_n-yt$ reads
$\mathsf{T}_{n,R_n}+\Delta_{R_n}U_{n-1,R_n}\le
t-\mathds{1}_{\{V_n=1\}}yt<\mathsf{T}_{n,R_n}+\Delta_{R_n}U_{n,R_n}-yt$
as $\mathsf{T}_{n-1,R_n}=\mathsf{T}_{n,R_n}$ by (\ref{T_coll_Psi}).
Conditional on $|\mathsf{T}_{n,R_n}/n-\mu|\le\epsilon\mu$, the latter
entails
\begin{equation}
\frac{t-\mathsf{J}_{n,yt}}{(1+\epsilon)\mu}<n\le \frac{t-\mathsf{I}_{n,yt}}{(1-\epsilon)\mu},
\label{n_constraint_1}
\end{equation}
where $\mathsf{I}_{n,z}$ and $\mathsf{J}_{n,z}$ denote for brevity the
random variables defined for $n\ge 1$ and $z\in\Rl$ by
\begin{equation*}
\mathsf{I}_{n,z}:=\Delta_{R_n}U_{n-1,R_n}+\mathds{1}_{\{V_n=1\}}z
\end{equation*}
and
\begin{equation*}
\mathsf{J}_{n,z}:=\Delta_{R_n}U_{n,R_n}-\mathds{1}_{\{V_n=-1\}}z.
\end{equation*}
The constraint (\ref{n_constraint_1}) in turn gives
$n\in(p_{t-\mathsf{J}_{n,yt}},q_{t-\mathsf{I}_{n,yt}}]$ for positive
  $n$ with $p_z\ge 0$ and $q_z\ge p_z$ as in (\ref{def:pz}) and
  (\ref{def:qz}), respectively. Thus, we can conclude that for any
  $y\in[\delta/2,1]$ and $t>0$
\begin{align}
  \nonumber
  \Psi_t(y)&\le 2\sqrt{a_t}\,\prob\big[\Delta_1>\delta t/2\big]+\mathcal{C}_t\{\epsilon\}\,\prob\big[\Delta_1>\delta t/2\big]\\
  \nonumber
  &+\sum_{n=a_t+1}^{+\infty}\PP\Big[\Delta_{R_n}>yt,\,n\in\big(p_{t-\mathsf{J}_{n,yt}},q_{t-\mathsf{I}_{n,yt}}\big],\,R_n\ge 1\Big]\\
    \nonumber
    &=2\sqrt{a_t}\,\prob\big[\Delta_1>\delta t/2\big]+\mathcal{C}_t\{\epsilon\}\,\prob\big[\Delta_1>\delta t/2\big]\\
  &+\sum_{n=a_t+1}^{+\infty}\PP\bigg[\Delta_{R_n}>yt,\,n\in\big(p_{t-\Delta_{R_n}U_{n,R_n}+\mathds{1}_{\{V_n=-1\}}yt},q_{t-\Delta_{R_n}U_{n-1,R_n}-\mathds{1}_{\{V_n=1\}}yt}\big],\,R_n\ge 1\bigg].
\label{Psi_upper_3}
\end{align}
This bound is the upper bound free from $\mathsf{T}_{n,R_n}$ we were
looking for.

Starting from (\ref{Psi_lower_0}) we can get a lower bound similar to
(\ref{Psi_upper_3}) by exchanging $p_z$ and $q_z$. With this purpose,
we claim that the conditions
$n\in(q_{t-\mathsf{J}_{n,yt}},p_{t-\mathsf{I}_{n,yt}})$ and
$|\mathsf{T}_{n,R_n}/n-\mu|\le\epsilon\mu$ imply
$T_{n-1}<t-\mathds{1}_{\{V_n=1\}}yt<T_n-yt$, so that for all
$y\in[\delta/2,1]$ and $t>0$
\begin{equation}
  \Psi_t(y)\ge\sum_{n=a_t+1}^{b_t}\PP\Bigg[\Delta_{R_n}>yt,\,n\in\big(q_{t-\mathsf{J}_{n,yt}},p_{t-\mathsf{I}_{n,yt}}\big),
    \,\bigg|\frac{\mathsf{T}_{n,R_n}}{n}-\mu\bigg|\le\epsilon\mu, \,R_n\ge 1\Bigg].
\label{Psi_lower_1}
\end{equation}
In fact, if there exists an integer $n\ge 1$ such that
$n\in(q_{t-\mathsf{J}_{n,yt}},p_{t-\mathsf{I}_{n,yt}})$, then $n$ must
satisfy
\begin{equation}
\frac{t-\Delta_{R_n} U_{n,R_n}+\mathds{1}_{\{V_n=-1\}}yt}{(1-\epsilon)\mu}<n<\frac{t-\Delta_{R_n} U_{n-1,R_n}-\mathds{1}_{\{V_n=1\}}yt}{(1+\epsilon)\mu}
\label{n_constraint_2}
\end{equation}
since the constraint $1\le n<p_{t-\mathsf{I}_{n,yt}}$ requires
$t-\mathsf{I}_{n,yt}=t-\Delta_{R_n}
U_{n-1,R_n}-\mathds{1}_{\{V_n=1\}}yt>0$. The condition
(\ref{n_constraint_2}) yields
$T_{n-1}<t-\mathds{1}_{\{V_n=1\}}yt<T_n-yt$ when it is combined with
$|\mathsf{T}_{n,R_n}/n-\mu|\le\epsilon\mu$ because
$\mathsf{T}_{n-1,R_n}=\mathsf{T}_{n,R_n}$ by (\ref{T_coll_Psi}).

To make (\ref{Psi_lower_1}) appear as (\ref{Psi_upper_3}) we now need
to eliminate the constraint
$|\mathsf{T}_{n,R_n}/n-\mu|\le\epsilon\mu$. We observe that the
condition (\ref{n_constraint_2}) also entails $\mu(n-1)+\Delta_{R_n}
U_{n-1,R_n}\le t<\mu n+\Delta_{R_n} U_{n,R_n}$. Then, by thinking of
the non-decreasing sequence $\{\mu n+\Delta_k
U_{n,k}\}_{n\in\mathbb{Z}_{\ge 0}}$ as a sequence of renewal times for
a given $k\ge 1$, we can deduce that
\begin{align}
  \nonumber
  &\sum_{n=a_t+1}^{b_t}\PP\Bigg[\Delta_{R_n}>\delta t/2,\,n\in\big(q_{t-\mathsf{J}_{n,yt}},p_{t-\mathsf{I}_{n,yt}}\big),\,\bigg|\frac{\mathsf{T}_{n,R_n}}{n}-\mu\bigg|>\epsilon\mu,
    \,R_n\ge 1\Bigg]\\
  \nonumber
  &\le\sum_{k=1}^{+\infty}\sum_{n=1}^{b_t}\,\PP\Bigg[\Delta_k>\delta t/2,\,\mu(n-1)+\Delta_kU_{n-1,k}\le t<\mu n+\Delta_kU_{n,k},\,
  \bigg|\frac{\mathsf{T}_{n,k}}{n}-\mu\bigg|>\epsilon\mu,\,R_n=k\Bigg]\\
\nonumber
&\le\sum_{k=1}^{+\infty}\sum_{n=1}^{b_t}\,\PP\Bigg[\Delta_k>\delta t/2,\,\mu(n-1)+\Delta_kU_{n-1,k}\le t<\mu n+\Delta_kU_{n,k},\,
  \sup_{a_t\le l\le b_t}\bigg\{\bigg|\frac{\mathsf{T}_{l,k}}{l}-\mu\bigg|\bigg\}>\epsilon\mu,\,M_{b_t}^+\ge k\Bigg]\\
\nonumber
&\le\sum_{k=1}^{+\infty}\,\PP\Bigg[\Delta_k>\delta t/2,\,\sup_{a_t\le l\le b_t}\bigg\{\bigg|\frac{\mathsf{T}_{l,k}}{l}-\mu\bigg|\bigg\}>\epsilon\mu,\,M_{b_t}^+\ge k\Bigg]
=\mathcal{C}_t\{\epsilon\}\,\prob\big[\Delta_1>\delta t/2\big],
\end{align}
where we have again recovered $\mathcal{C}_t\{\epsilon\}$ defined by
(\ref{defc}). Thus, (\ref{Psi_lower_1}) shows that 
\begin{align}
  \nonumber
  \Psi_t(y)&\ge\sum_{n=a_t+1}^{b_t}\PP\Big[\Delta_{R_n}>yt,\,n\in\big(q_{t-\mathsf{J}_{n,yt}},p_{t-\mathsf{I}_{n,yt}}\big),\,R_n\ge 1\Big]\\
  \nonumber
  &-\sum_{n=a_t+1}^{b_t}\PP\Bigg[\Delta_{R_n}>\delta t/2,\,n\in\big(q_{t-\mathsf{J}_{n,yt}},p_{t-\mathsf{I}_{n,yt}}\big),\,\bigg|\frac{\mathsf{T}_{n,R_n}}{n}-\mu\bigg|>\epsilon\mu,
    \,R_n\ge 1\Bigg]\\
  \nonumber
  &\ge\sum_{n=a_t+1}^{b_t}\PP\Big[\Delta_{R_n}>yt,\,n\in\big(q_{t-\mathsf{J}_{n,yt}},p_{t-\mathsf{I}_{n,yt}}\big),\,R_n\ge 1\Big]-
  \mathcal{C}_t\{\epsilon\}\,\prob\big[\Delta_1>\delta t/2\big].
\end{align}
Since the constraint $n<p_{t-\mathsf{I}_{n,yt}}$ implies $n\le
t/\mu\le b_t:=\lfloor 2t/\mu\rfloor+1$, we can drop the upper bound in
the sum over $n$ and state that for every $y\in[\delta/2,1]$ and $t>0$
\begin{align}
  \nonumber
  \Psi_t(y)&\ge\sum_{n=a_t+1}^{+\infty}\PP\bigg[\Delta_{R_n}>yt,\,n\in\big(q_{t-\Delta_{R_n}U_{n,R_n}+\mathds{1}_{\{V_n=-1\}}yt},p_{t-\Delta_{R_n}U_{n-1,R_n}-\mathds{1}_{\{V_n=1\}}yt}\big),
    \,R_n\ge 1\bigg]\\
  &-\mathcal{C}_t\{\epsilon\}\,\prob\big[\Delta_1>\delta t/2\big].
\label{Psi_lower_2}
\end{align}
This lower bound is free from $\mathsf{T}_{n,R_n}$ and conclude the
first part of our program. The term $\mathcal{C}_t\{\epsilon\}$ will
be later tackled by means of Lemma \ref{cboundimp}.

The second part of the program aims to verify (\ref{Psi_Psi_primo}) on
the basis of bounds (\ref{Psi_upper_3}) and (\ref{Psi_lower_2}) by some
explicit calculations. Let us use the identity
$U_{n,R_n}=U_{n-1,R_n}+1$ to rewrite the upper bound
(\ref{Psi_upper_3}) as
\begin{align}
  \nonumber
  \Psi_t(y)&\le 2\sqrt{a_t}\,\prob\big[\Delta_1>\delta t/2\big]+\mathcal{C}_t\{\epsilon\}\,\prob\big[\Delta_1>\delta t/2\big]\\
\nonumber
&+\sum_{n=a_t+1}^{+\infty}\sum_{k=1}^{+\infty}\PP\bigg[\Delta_k>yt,\,n\in\big(p_{t-\Delta_k(U_{n-1,k}+1)+\mathds{1}_{\{V_n=-1\}}yt},
  q_{t-\Delta_kU_{n-1,k}-\mathds{1}_{\{V_n=1\}}yt}\big],\,R_n=k\bigg].
\end{align}
From here, Fubini's theorem and the fact that $\Delta_k$ is
distributed as $\Delta_1$ give for any $y\in[\delta/2,1]$ and $t>0$
\begin{align}
  \nonumber
  \Psi_t(y)&\le 2\sqrt{a_t}\,\prob\big[\Delta_1>\delta t/2\big]+\mathcal{C}_t\{\epsilon\}\,\prob\big[\Delta_1>\delta t/2\big]\\
    &+\Ex\Bigg[\sum_{v\in\mathbb{Z}_2}\sum_{u=0}^{+\infty}\sum_{n=a_t+1}^{+\infty}
      \mathds{1}_{\big\{\Delta_1>yt,\,n\in\big(p_{t-\Delta_1(u+1)+\mathds{1}_{\{v=-1\}}yt},q_{t-\Delta_1u-\mathds{1}_{\{v=1\}}yt}\big]\big\}}\,\mathcal{P}_{u,v}\{n\}\Bigg],
\label{Psi_upper_4}
\end{align}
where for all integers $n\ge 1$, $u\ge 0$, and $v\in\mathbb{Z}_2$ we
have set
\begin{equation}
\mathcal{P}_{u,v}\{n\}:=\sum_{k=1}^{+\infty}\,P\Big[U_{n-1,k}=u,R_n=k,V_n=v\Big]=P\Big[U_{n-1,R_n}=u,R_n\ge 1,V_n=v\Big].
\label{Pcalnuv}
\end{equation}
Similarly, the lower bound (\ref{Psi_lower_2}) yields for every
$y\in[\delta/2,1]$ and $t>0$
\begin{align}
  \nonumber
  \Psi_t(y)&\ge\Ex\Bigg[\sum_{v\in\mathbb{Z}_2}\sum_{u=0}^{+\infty}
    \sum_{n=a_t+1}^{+\infty}\mathds{1}_{\big\{\Delta_1>yt,\,n\in\big(q_{t-\Delta_1(u+1)+\mathds{1}_{\{v=-1\}}yt},p_{t-\Delta_1u-\mathds{1}_{\{v=1\}}yt}\big)\big\}}\,\mathcal{P}_{u,v}\{n\}\Bigg]\\
  &-\mathcal{C}_t\{\epsilon\}\,\prob\big[\Delta_1>\delta t/2\big].
\label{Psi_lower_3}
\end{align}
The following lemma, which only involves the simple symmetric random
walk, provides a formula for $\mathcal{P}_{u,v}\{n\}$ and is proved in
Appendix \ref{proof:sommeUv}.
\begin{lemma}
  \label{sommeUv}
  Let $\mathcal{P}_{u,v}\{n\}$ be defined by (\ref{Pcalnuv}) for every
  integers $n\ge 1$, $u\ge 0$, and $v\in\mathbb{Z}_2$. Then
\begin{equation*}
  \mathcal{P}_{u,v}\{n\}=
  \begin{cases}
    \frac{1}{2^n}{n-1\choose\lfloor\frac{n+u}{2}\rfloor} & \mbox{if $u+\mathds{1}_{\{v=1\}}$ is odd},\\
    0 & \mbox{otherwise}.
  \end{cases}
  \end{equation*}
\end{lemma}

The expression of $\mathcal{P}_{u,v}\{n\}$ can be further simplified
when $u$ is much smaller than $n$. In fact, the inequalities
$\sqrt{2\pi l}\,l^le^{-l}\le l!\le\sqrt{2\pi l}\,l^le^{-l+\frac{1}{12
    l}}$ valid for all positive integers $l$ \cite{Stirling} show that
there exists an integer $n_o\ge 0$ such that
\begin{equation*}
  \frac{1-\epsilon}{\sqrt{2\pi n}}\le\frac{1}{2^n}{n-1\choose\lfloor\frac{n+u}{2}\rfloor}\le \frac{1+\epsilon}{\sqrt{2\pi n}}
\end{equation*}
for all $n>n_o$ and $u<2/\delta$. On the other hand, according to
(\ref{def:pz}) and (\ref{def:qz}), the conditions $1\le n\le
q_{t-\Delta_1u-\mathds{1}_{\{v=1\}}yt}$ in (\ref{Psi_upper_4}) and
$1\le n<p_{t-\Delta_1u-\mathds{1}_{\{v=1\}}yt}$ in (\ref{Psi_lower_3})
require $t-\Delta_1u-\mathds{1}_{\{v=1\}}yt>0$ to be satisfied. The
latter gives $u<\delta/2$ when combined with $\Delta_1>yt$ and
$y\ge\delta/2$. Let $t_o$ be a positive real number such that
$a_t:=\lfloor \epsilon t/\mu\rfloor>n_o$ for $t>t_o$. If $t>t_o$,
$n>a_t$, and $u<2/\delta$, then Lemma \ref{sommeUv} implies
$\frac{1-\epsilon}{\sqrt{2\pi
    n}}\le\mathcal{P}_{u,v}\{n\}\le\frac{1+\epsilon}{\sqrt{2\pi n}}$
or $\mathcal{P}_{u,v}\{n\}=0$ depending on whether
$u+\mathds{1}_{\{v=1\}}$ is odd or even. Let us plug these estimates
in (\ref{Psi_upper_4}) and (\ref{Psi_lower_3}).  Since
$u+\mathds{1}_{\{v=1\}}$ is odd if and only if there exists an integer
$l\ge 0$ such that $u=2l+\mathds{1}_{\{v=-1\}}$, bound
(\ref{Psi_upper_4}) yields for all $y\in[\delta/2,1]$ and $t>t_o$
\begin{align}
  \nonumber
  \Psi_t(y)&\le 2\sqrt{a_t}\,\prob\big[\Delta_1>\delta t/2\big]+\mathcal{C}_t\{\epsilon\}\,\prob\big[\Delta_1>\delta t/2\big]\\
    &+\Ex\Bigg[\mathds{1}_{\{\Delta_1>yt\}}\sum_{v\in\mathbb{Z}_2}\sum_{l=0}^{+\infty}\sum_{n=a_t+1}^{+\infty}
    \mathds{1}_{\{n\in(p_{t-\mathsf{K}_{l,v,yt}},q_{t-\mathsf{H}_{l,v,yt}}]\}}\,\frac{1+\epsilon}{\sqrt{2\pi n}}\Bigg],
\label{Psi_upper_5}
\end{align}
where $\mathsf{H}_{l,v,z}$ and $\mathsf{K}_{l,v,z}$ are defined for
integers $l\ge 0$ and $v\in\mathbb{Z}_2$ and a real number $z$ by
\begin{equation*}
\mathsf{H}_{l,v,z}:=(2l+1)\Delta_1-\mathds{1}_{\{v=1\}}(\Delta_1-z)
\end{equation*}
and
\begin{equation*}
\mathsf{K}_{l,v,z}:=(2l+1)\Delta_1+\mathds{1}_{\{v=-1\}}(\Delta_1-z).
\end{equation*}
We stress that $0\le \mathsf{H}_{l,v,z}\le \mathsf{K}_{l,v,z}$
conditional on $\Delta_1>z>0$.  Similarly, bound (\ref{Psi_lower_3})
entails that for each $y\in[\delta/2,1]$ and $t>t_o$
\begin{equation}
  \Psi_t(y)\ge\Ex\Bigg[\mathds{1}_{\{\Delta_1>yt\}}\sum_{v\in\mathbb{Z}_2}\sum_{l=0}^{+\infty}\sum_{n=a_t+1}^{+\infty}
    \mathds{1}_{\{n\in(q_{t-\mathsf{K}_{l,v,yt}},p_{t-\mathsf{H}_{l,v,yt}})\}}\,\frac{1-\epsilon}{\sqrt{2\pi n}}\Bigg]-\mathcal{C}_t\{\epsilon\}\,\prob\big[\Delta_1>\delta t/2\big].
\label{Psi_lower_4}
\end{equation}

Let us now carry out the sum over $n$ in (\ref{Psi_upper_5}) and
(\ref{Psi_lower_4}). We tackle the upper bound (\ref{Psi_upper_5})
first. Bear in mind the expressions (\ref{def:pz}) and (\ref{def:qz})
of $p_z$ and $q_z$.  We notice that the condition $1\le n\le
q_{t-\mathsf{H}_{l,v,yt}}$ is fulfilled only if
$\mathsf{H}_{l,v,yt}<t$, which yields $l<1/\delta$ if $\Delta_1>yt$
with $y\ge \delta/2$. At the same time, we have
$\sum_{n=p+1}^q1/\sqrt{n}\le 1+2\sqrt{q}-2\sqrt{p+1}$ for every
integers $0\le p\le q$ as a consequence of the inequality
$1/\sqrt{n}\le 2\sqrt{n}-2\sqrt{n-1}$ valid for $n\ge 1$.  Thus, if
$\Delta_1>yt\ge \delta t/2$, then we can state that
\begin{align}
  \nonumber
&\sum_{v\in\mathbb{Z}_2}\sum_{l=0}^{+\infty}\sum_{n=a_t+1}^{+\infty}
\mathds{1}_{\{n\in(p_{t-\mathsf{K}_{l,v,yt}},q_{t-\mathsf{H}_{l,v,yt}}]\}}\,\frac{1+\epsilon}{\sqrt{2\pi n}}\\
  \nonumber
  &\le
  \sum_{v\in\mathbb{Z}_2}\sum_{l=0}^{\lfloor 1/\delta\rfloor}\sum_{n=1}^{+\infty}\mathds{1}_{\{n\in(p_{t-\mathsf{K}_{l,v,yt}},q_{t-\mathsf{H}_{l,v,yt}}]\}}\,\frac{1+\epsilon}{\sqrt{2\pi n}}\\
    \nonumber
    &\le \frac{1+\epsilon}{\sqrt{2\pi}} \sum_{v\in\mathbb{Z}_2}\sum_{l=0}^{\lfloor 1/\delta\rfloor}\Big[1+2\sqrt{q_{t-\mathsf{H}_{l,v,yt}}}-2\sqrt{p_{t-\mathsf{K}_{l,v,yt}}+1}\Big]\\
    \nonumber
    &\le\frac{1+\epsilon}{\sqrt{2\pi}} \sum_{v\in\mathbb{Z}_2}\sum_{l=0}^{\lfloor 1/\delta\rfloor}\Bigg[1+2\sqrt{\frac{(t-\mathsf{H}_{l,v,yt})\vee 0}{(1-\epsilon)\mu}}
      -2\sqrt{\frac{(t-\mathsf{K}_{l,v,yt})\vee 0}{(1+\epsilon)\mu}}~\Bigg].
\end{align}
At this point, by making use of the facts that $\epsilon<1/2$,
$(1+\epsilon)/\sqrt{1-\epsilon}\le 1+3\epsilon$, and
$\sqrt{1+\epsilon}>1$, as well as the fact that
$\mathsf{K}_{l,v,yt}\ge 0$ conditional on $\Delta_1>yt$, we get that
if $\Delta_1>yt\ge \delta t/2$, then
\begin{align}
  \nonumber
  &\sum_{v\in\mathbb{Z}_2}\sum_{l=0}^{+\infty}\sum_{n=a_t+1}^{+\infty}\mathds{1}_{\{n\in(p_{t-\mathsf{K}_{l,v,yt}},q_{t-\mathsf{H}_{l,v,yt}}]\}}\,\frac{1+\epsilon}{\sqrt{2\pi n}}\\
    \nonumber
    &\le \sum_{v\in\mathbb{Z}_2}\sum_{l=0}^{\lfloor 1/\delta\rfloor}\Bigg[1+3\epsilon\sqrt{\frac{t}{\mu}}+\sqrt{\frac{2(t-\mathsf{H}_{l,v,yt})\vee 0}{\pi\mu}}
      -\sqrt{\frac{2(t-\mathsf{K}_{l,v,yt})\vee 0}{\pi\mu}}~\Bigg]\\
    \nonumber
    &=\frac{2}{\delta}+\frac{6\epsilon}{\delta}\sqrt{\frac{t}{\mu}}+\sum_{l=0}^{+\infty}\Bigg[\sqrt{\frac{2(t-\mathsf{H}_{l,1,yt})\vee 0}{\pi\mu}}
      -\sqrt{\frac{2(t-\mathsf{K}_{l,-1,yt})\vee 0}{\pi\mu}}~\Bigg].
\end{align}
For the last step we make use of the identity
\begin{equation}
  \sqrt{\frac{2(t-w)\vee 0}{\pi\mu}}-\sqrt{\frac{2(t-z)\vee 0}{\pi\mu}}
  =\sqrt{\frac{t}{2\pi\mu}}\int_0^1\mathds{1}_{\{w\le\xi t<z\}}\frac{d\xi}{\sqrt{1-\xi}}
\label{identity}
\end{equation}
valid for real numbers $w$ and $z$ such that $0\le w\le z$, as one can
easily verify. As $\mathsf{H}_{l,1,yt}:=(2l)\Delta_1+yt$ and
$\mathsf{K}_{l,-1,yt}:=(2l+2)\Delta_1-yt$, the hypothesis
$\Delta_1>yt$ implies $0\le \mathsf{H}_{l,1,yt}\le
\mathsf{K}_{l,-1,yt}$. Then, the above identity gives for
$\Delta_1>yt\ge \delta t/2$
\begin{align}
  \nonumber
  \sum_{v\in\mathbb{Z}_2}\sum_{l=0}^{+\infty}\sum_{n=a_t+1}^{+\infty}\mathds{1}_{\{n\in(p_{t-\mathsf{K}_{l,v,yt}},q_{t-\mathsf{H}_{l,v,yt}}]\}}\,\frac{1+\epsilon}{\sqrt{2\pi n}}
    &\le \frac{2}{\delta}+\frac{6\epsilon}{\delta}\sqrt{\frac{t}{\mu}}\\
    &+\sqrt{\frac{t}{2\pi\mu}}\,\sum_{l=0}^{+\infty}\int_0^1\mathds{1}_{\{(2l)\Delta_1+yt\le\xi t<(2l+2)\Delta_1-yt\}}\frac{d\xi}{\sqrt{1-\xi}}.
\label{Psi_upper_6}
\end{align}
By plugging bound (\ref{Psi_upper_6}) in (\ref{Psi_upper_5}), Fubini's
theorem finally allows to obtain for $y\in[\delta/2,1]$ and $t>t_o$
\begin{align}
  \nonumber
  \Psi_t(y)&\le 2\sqrt{a_t}\,\prob\big[\Delta_1>\delta t/2\big]+\mathcal{C}_t\{\epsilon\}\,\prob\big[\Delta_1>\delta t/2\big]+
  \bigg(\frac{2}{\delta}+\frac{6\epsilon}{\delta}\sqrt{\frac{t}{\mu}}\bigg)\prob\big[\Delta_1>\delta t/2\big]\\
  \nonumber
    &+\Ex\Bigg[\sqrt{\frac{t}{2\pi\mu}}\,\sum_{l=0}^{+\infty}\int_0^1\mathds{1}_{\{(2l)\Delta_1+yt\le\xi t<(2l+2)\Delta_1-yt\}}\frac{d\xi}{\sqrt{1-\xi}}\Bigg]\\
  &= 2\sqrt{a_t}\,\prob\big[\Delta_1>\delta t/2\big]+\mathcal{C}_t\{\epsilon\}\,\prob\big[\Delta_1>\delta t/2\big]+
  \bigg(\frac{2}{\delta}+\frac{6\epsilon}{\delta}\sqrt{\frac{t}{\mu}}\bigg)\prob\big[\Delta_1>\delta t/2\big]+\Psi'_t(y),
  \label{Psi_upper_7}
\end{align}
with $\Psi'_t(y)$ as in (\ref{def_Psi_primo}). We stress that the
condition $\Delta_1>yt$ in the expectation can be dropped in the
transition from (\ref{Psi_upper_5}) to (\ref{Psi_upper_7}) because if
$(2l)\Delta_1+yt\le\xi t<(2l+2)\Delta_1-yt$ for some $l\ge 0$ and
$\xi\in[0,1]$, then automatically $\Delta_1>yt$. This remark is
important to obtain a lower bound for $\Psi_t(y)$ similar to
(\ref{Psi_upper_7}).

Let us move to the lower bound (\ref{Psi_lower_4}). We have
$\sum_{n=1}^{a_t}1/\sqrt{n}\le 2\sqrt{a_t}$ since $1/\sqrt{n}\le
2\sqrt{n}-2\sqrt{n-1}$ for $n\ge 1$. We also have
$\sum_{n=q+1}^{p-1}1/\sqrt{n}\ge 2\sqrt{p+1}-2\sqrt{q}-3$ for all
integers $p\ge 0$ and $q\ge 0$ due to the inequality $1/\sqrt{n}\ge
2\sqrt{n+1}-2\sqrt{n}$ valid for $n\ge 1$. Then
\begin{align}
  \nonumber
&\sum_{v\in\mathbb{Z}_2}\sum_{l=0}^{+\infty}\sum_{n=a_t+1}^{+\infty}
\mathds{1}_{\{n\in(q_{t-\mathsf{K}_{l,v,yt}},p_{t-\mathsf{H}_{l,v,yt}})\}}\,\frac{1-\epsilon}{\sqrt{2\pi n}}\\
\nonumber
&\ge\sum_{v\in\mathbb{Z}_2}\sum_{l=0}^{\lfloor 1/\delta\rfloor}\sum_{n=1}^{+\infty}\mathds{1}_{\{n\in(q_{t-\mathsf{K}_{l,v,yt}},p_{t-\mathsf{H}_{l,v,yt}})\}}\,\frac{1-\epsilon}{\sqrt{2\pi n}}
  -\sum_{v\in\mathbb{Z}_2}\sum_{l=0}^{\lfloor 1/\delta\rfloor}\sum_{n=1}^{a_t}\frac{1-\epsilon}{\sqrt{2\pi n}}\\
    \nonumber
    &\ge \frac{1-\epsilon}{\sqrt{2\pi}} \sum_{v\in\mathbb{Z}_2}\sum_{l=0}^{\lfloor 1/\delta\rfloor}\Big[2\sqrt{p_{t-\mathsf{H}_{l,v,yt}}+1}-2\sqrt{q_{t-\mathsf{K}_{l,v,yt}}}-3\Big]
    -\frac{2}{\delta}\sqrt{a_t}\\
    \nonumber
    &\ge\frac{1-\epsilon}{\sqrt{2\pi}} \sum_{v\in\mathbb{Z}_2}\sum_{l=0}^{\lfloor 1/\delta\rfloor}\Bigg[2\sqrt{\frac{(t-\mathsf{H}_{l,v,yt})\vee 0}{(1+\epsilon)\mu}}
      -2\sqrt{\frac{(t-\mathsf{K}_{l,v,yt})\vee 0}{(1-\epsilon)\mu}}-3\Bigg]-\frac{2}{\delta}\sqrt{a_t}.
\end{align}
At this point, by making use of the facts that
$(1-\epsilon)/\sqrt{1+\epsilon}\ge 1-2\epsilon$ as $\epsilon<1/2$,
$\sqrt{1-\epsilon}<1$, and $1-\epsilon<1$, as well as the fact that
$\mathsf{H}_{l,v,yt}\ge 0$ conditional on $\Delta_1>yt$, we get that
if $\Delta_1>yt$, then
\begin{align}
  \nonumber
  &\sum_{v\in\mathbb{Z}_2}\sum_{l=0}^{+\infty}\sum_{n=a_t+1}^{+\infty}\mathds{1}_{\{n\in(q_{t-\mathsf{K}_{l,v,yt}},p_{t-\mathsf{H}_{l,v,yt}})\}}\,\frac{1-\epsilon}{\sqrt{2\pi n}}\\
  \nonumber
    &\ge \sum_{v\in\mathbb{Z}_2}\sum_{l=0}^{\lfloor 1/\delta\rfloor}\Bigg[\sqrt{\frac{2(t-\mathsf{H}_{l,v,yt})\vee 0}{\pi\mu}}
      -\sqrt{\frac{2(t-\mathsf{K}_{l,v,yt})\vee 0}{\pi\mu}}-2-2\epsilon\sqrt{\frac{t}{\mu}}~\Bigg]-\frac{2}{\delta}\sqrt{a_t}\\
    \nonumber
    &=\sum_{l=0}^{\lfloor 1/\delta\rfloor}\Bigg[\sqrt{\frac{2(t-\mathsf{H}_{l,1,yt})\vee 0}{\pi\mu}}
      -\sqrt{\frac{2(t-\mathsf{K}_{l,-1,yt})\vee 0}{\pi\mu}}~\Bigg]-\frac{4}{\delta}-\frac{4\epsilon}{\delta}\sqrt{\frac{t}{\mu}}-\frac{2}{\delta}\sqrt{a_t}.
\end{align}
We point out that if $\Delta_1>yt\ge \delta t/2$, then
$\mathsf{H}_{l,1,yt}>t$ and $\mathsf{K}_{l,-1,yt}>t$ when $l>\lfloor
1/\delta\rfloor$, so that we can drop the upper bound in the above sum
and write for $\Delta_1>yt\ge \delta t/2$
\begin{align}
  \nonumber
  \sum_{v\in\mathbb{Z}_2}\sum_{l=0}^{+\infty}\sum_{n=a_t+1}^{+\infty}\mathds{1}_{\{n\in(q_{t-\mathsf{K}_{l,v,yt}},p_{t-\mathsf{H}_{l,v,yt}})\}}\,\frac{1-\epsilon}{\sqrt{2\pi n}}
  &\ge\sum_{l=0}^{+\infty}\Bigg[\sqrt{\frac{2(t-\mathsf{H}_{l,1,yt})\vee 0}{\pi\mu}}
  -\sqrt{\frac{2(t-\mathsf{K}_{l,-1,yt})\vee 0}{\pi\mu}}~\Bigg]\\
\nonumber
&-\frac{4}{\delta}-\frac{4\epsilon}{\delta}\sqrt{\frac{t}{\mu}}-\frac{2}{\delta}\sqrt{a_t}.
\end{align}
As before, identity (\ref{identity}) yields for $\Delta_1>yt\ge \delta
t/2$
\begin{align}
  \nonumber
  \sum_{v\in\mathbb{Z}_2}\sum_{l=0}^{+\infty}\sum_{n=a_t+1}^{+\infty}\mathds{1}_{\{n\in(q_{t-\mathsf{K}_{l,v,yt}},p_{t-\mathsf{H}_{l,v,yt}})\}}\,\frac{1-\epsilon}{\sqrt{2\pi n}}
\nonumber
&\ge \sqrt{\frac{t}{2\pi\mu}}\,\sum_{l=0}^{+\infty}\int_0^1\mathds{1}_{\{(2l)\Delta_1+yt\le\xi t<(2l+2)\Delta_1-yt\}}\frac{d\xi}{\sqrt{1-\xi}}\\
&-\frac{4}{\delta}-\frac{4\epsilon}{\delta}\sqrt{\frac{t}{\mu}}-\frac{2}{\delta}\sqrt{a_t}.
\label{Psi_lower_5}
\end{align}
By plugging bound (\ref{Psi_lower_5}) in (\ref{Psi_lower_4}) and by
recalling that the condition $(2l)\Delta_1+yt\le\xi
t<(2l+2)\Delta_1-yt$ for some $l\ge 0$ and $\xi\in[0,1]$ implies
$\Delta_1>yt$, an application of Fubini's theorem finally gives for
$y\in[\delta/2,1]$ and $t>t_o$
\begin{align}
  \nonumber
  \Psi_t(y)&\ge \Ex\Bigg[\sqrt{\frac{t}{2\pi\mu}}\,\sum_{l=0}^{+\infty}\int_0^1\mathds{1}_{\{(2l)\Delta_1+yt\le\xi t<(2l+2)\Delta_1-yt\}}\frac{d\xi}{\sqrt{1-\xi}}\Bigg]\\
\nonumber
  &-\mathcal{C}_t\{\epsilon\}\,\prob\big[\Delta_1>\delta t/2\big]-\bigg(\frac{4}{\delta}+\frac{4\epsilon}{\delta}\sqrt{\frac{t}{\mu}}+\frac{2}{\delta}\sqrt{a_t}\bigg)
  \prob\big[\Delta_1>\delta t/2\big]\\
  &= \Psi'_t(y)-\mathcal{C}_t\{\epsilon\}\,\prob\big[\Delta_1>\delta t/2\big]-\bigg(\frac{4}{\delta}+\frac{4\epsilon}{\delta}\sqrt{\frac{t}{\mu}}
  +\frac{2}{\delta}\sqrt{a_t}\bigg)\prob\big[\Delta_1>\delta t/2\big].
  \label{Psi_lower_6}
\end{align}

Since $a_t:=\lfloor\epsilon t/\mu\rfloor$,
$\lim_{t\uparrow+\infty}\prob[\Delta_1>\delta
  t/2]/\prob[\Delta_1>t]=2^\alpha\delta^{-\alpha}$ by regular
variation, and
$\lim_{t\uparrow+\infty}\mathcal{C}_t\{\epsilon\}/\sqrt{t}=0$ by Lemma
\ref{cboundimp}, bounds (\ref{Psi_upper_7}) and (\ref{Psi_lower_6})
show that
\begin{equation*}
  \adjustlimits\limsup_{t\uparrow+\infty}\sup_{~y\in[\delta/2,1]~}\!
  \Bigg\{\bigg|\frac{\Psi_t(y)}{\sqrt{t}\,\prob[\Delta_1>t]}-\frac{\Psi_t'(y)}{\sqrt{t}\,\prob[\Delta_1>t]}\bigg|\Bigg\}\le
  \big[(1+\delta)\sqrt{\epsilon}+3\epsilon\big]\frac{2^{\alpha+1}}{\delta^{\alpha+1}\sqrt{\mu}}.
\end{equation*}
Limit (\ref{Psi_Psi_primo}) follows from here by sending $\epsilon$ to
zero. 

\begin{remark}
  The procedure to demonstrate the limit (\ref{Psi1}) explains the sum
  over the index $l$ in the expression of the function $f_\alpha$. In
  fact, it is now clear that this index basically counts the number of
  passages of the particle over the current edge. The sum over $l$
  then provides the total statistical weight of the passages on the
  longest edge.
\end{remark}

\subsection{The bound (\ref{F(hx)}) for the function $F$}
\label{sec:F(hx)}

We conclude the proof of Theorem \ref{atail} by verifying the bound
(\ref{F(hx)}).  Pick $x\in[\delta,1]$.  Starting from (\ref{defFy}),
the change of variable $\xi\mapsto h\xi$ gives
\begin{align}
\nonumber
F(hx)&:=\frac{1}{\sqrt{2\pi\mu}}\int_{hx}^1 f_\alpha\bigg(\frac{hx}{\xi}\bigg)\frac{d\xi}{\xi^\alpha\sqrt{1-\xi}}\\
\nonumber
&=\frac{1}{\sqrt{2\pi\mu}}\int_{hx}^h f_\alpha\bigg(\frac{hx}{\xi}\bigg)\frac{d\xi}{\xi^\alpha\sqrt{1-\xi}}+
\frac{1}{\sqrt{2\pi\mu}}\int_{h}^1 f_\alpha\bigg(\frac{hx}{\xi}\bigg)\frac{d\xi}{\xi^\alpha\sqrt{1-\xi}}\\
\nonumber
  &=\frac{h^{1-\alpha}}{\sqrt{2\pi\mu}}\int_x^1 f_\alpha\bigg(\frac{x}{\xi}\bigg)\frac{d\xi}{\xi^\alpha\sqrt{1-h\xi}}+
  \frac{1}{\sqrt{2\pi\mu}}\int_h^1 f_\alpha\bigg(\frac{hx}{\xi}\bigg)\frac{d\xi}{\xi^\alpha\sqrt{1-\xi}}.
\end{align}
The function $f_\alpha$ satisfies $0<f_\alpha(y)\le y^{-\alpha}$ for
all $y\in(0,1)$ by Lemma \ref{prop_funalpha}. Then, by observing that
$\sqrt{1-h\xi}>\sqrt{1-\xi}$ for $\xi\in[0,1]$, we find
\begin{align}
  \nonumber
  F(hx)&\le F(x)+\frac{h^{1-\alpha}-1}{\sqrt{2\pi\mu}\,x^\alpha}\int_0^1\frac{d\xi}{\sqrt{1-\xi}}+
  \frac{1}{\sqrt{2\pi\mu}\,(hx)^\alpha}\int_h^1\frac{d\xi}{\sqrt{1-\xi}}\\
  \nonumber
  &=F(x)+2\,\frac{h^{1-\alpha}-1}{\sqrt{2\pi\mu}\,x^\alpha}+2\,\frac{\sqrt{1-h}}{\sqrt{2\pi\mu}\,(hx)^\alpha}.
\end{align}
The arbitrariness of $x\in[\delta,1]$, together with the fact that
$h>1/2$, proves (\ref{F(hx)}).

\section{Annealed moments: proof of Theorem \ref{annealed_moment}}
\label{proof_amoment}

Pick three real numbers $q>0$, $L\ge 1$, and $\delta\in(0,1]$.  We
  investigate the annealed $q$-order moment of the particle
  displacement by isolating the contribution of the normal
  fluctuations up to the spatial scale $L\sqrt{\mu t}$, the
  contribution of the large fluctuations above the threshold $\delta
  t$, and the contribution of the fluctuations in between these two
  regimes. Focusing on times $t$ that satisfy $L\sqrt{\mu t}<\delta
  t$, that is $t>\mu L^2/\delta^2$, our starting point to estimate the
  moment $\EE[|X_t|^q]$ is the formula
\begin{equation}
  \EE\big[|X_t|^q\big]=\EE\Big[|X_t|^q\mathds{1}_{\{|X_t|\le L\sqrt{\mu t}\}}\Big]+\EE\Big[|X_t|^q\mathds{1}_{\{|X_t|>\delta t\}}\Big]
  +\EE\Big[|X_t|^q\mathds{1}_{\{L\sqrt{\mu t}<|X_t|\le\delta t\}}\Big].
\label{amoment_0}
\end{equation}
We first study the large-$t$ behavior of each term in
(\ref{amoment_0}), and then we send $L$ to infinity and $\delta$ to
zero.

The contribution of the normal fluctuations is controlled by the
annealed central limit theorem stated by Corollary \ref{CLTa} in the
following way. The identity $|X_t|^q\mathds{1}_{\{|X_t|\le L\sqrt{\mu
    t}\}}=(\mu t)^{\frac{q}{2}}\int_0^L
  qx^{q-1}\mathds{1}_{\{x<|X_t|/\sqrt{\mu t}\le L\}}dx$, in
  combination with Fubini's theorem, yields
\begin{align}
  \nonumber
  \EE\Big[|X_t|^q\mathds{1}_{\{|X_t|\le L\sqrt{\mu t}\}}\Big]&=\EE\Bigg[(\mu t)^{\frac{q}{2}}\int_0^L qx^{q-1}\mathds{1}_{\{x<|X_t|/\sqrt{\mu t}\le
    L\}}dx\Bigg]\\
  \nonumber
  &=(\mu t)^{\frac{q}{2}}\int_0^L qx^{q-1}\,\PP\bigg[x<\frac{|X_t|}{\sqrt{\mu t}}\le L\bigg]dx.
  \end{align}
Thus, Corollary \ref{CLTa} together with the Portmanteau theorem
and the dominated convergence theorem shows that
\begin{align}
  \nonumber
  \lim_{t\uparrow+\infty}\frac{\EE[|X_t|^q\mathds{1}_{\{|X_t|\le L\sqrt{\mu t}\}}]}{t^{\frac{q}{2}}}
  &=\sqrt{\frac{\mu^q}{2\pi}}\int_0^L qx^{q-1}\bigg(\int_{-\infty}^{+\infty} \mathds{1}_{\{x<|\xi|\le L\}}e^{-\frac{1}{2}\xi^2}d\xi\bigg)dx\\
  &=\sqrt{\frac{2\mu^q}{\pi}}\int_0^L \xi^q e^{-\frac{1}{2}\xi^2} d\xi.
  \label{normalfl}
  \end{align}

The contribution of the large fluctuations is tackled by means of the
precise large deviation principle stated by Theorem \ref{atail}.  The
identity $|X_t|^q\mathds{1}_{\{|X_t|>\delta t\}}=(\delta
t)^q\mathds{1}_{\{|X_t|>\delta t\}}+ t^q\int_\delta^1
qx^{q-1}\mathds{1}_{\{|X_t|>xt\}}dx$, Fubini's theorem, and the
left-right symmetry of the model give
\begin{equation*}
  \EE\Big[|X_t|^q\mathds{1}_{\{|X_t|>\delta t\}}\Big]=2(\delta t)^q\,\PP\big[X_t>\delta t\big]
  +2t^q\int_\delta^1 qx^{q-1}\PP\big[X_t>xt\big]dx.
  \end{equation*}
It follows from Theorem \ref{atail} that
\begin{equation}
  \lim_{t\uparrow+\infty} \frac{\EE[|X_t|^q\mathds{1}_{\{|X_t|>\delta t\}}]}{t^{q+\frac{1}{2}}\prob[\Delta_1>t]}
  =2\delta^qF(\delta)+2\int_\delta^1 qx^{q-1}F(x)\,dx.
\label{largefl}
\end{equation}

We shall prove Theorem \ref{annealed_moment} by combining formula
(\ref{amoment_0}) with the limits (\ref{normalfl}) and (\ref{largefl})
and by demonstrating that the contribution of intermediate
fluctuations is negligible, addressing the case $q<2\alpha-1$ or
$q=2\alpha-1$ and $\alpha=1$ in Section \ref{sec:qsmall} and the case
$q=2\alpha-1$ and $\alpha>1$ or $q>2\alpha-1$ in Section
\ref{sec:qlarge}. The intermediate fluctuations are characterized by
the following lemma, which is based on Rosenthal's inequalities for
the moments of a sum of i.i.d.\ random variables and does not require
any assumption on the tail probability of $\Delta_1$ other than
$\mu<+\infty$.
\begin{lemma}
  \label{bound_aux_ql}
  Assume that $\mu<+\infty$ and fix $q>0$, $L\ge 1$, and
  $\delta\in(0,1]$. Then, for all sufficiently large $t$
\begin{equation*}
  \EE\Big[|X_t|^q\mathds{1}_{\{L\sqrt{\mu t}<|X_t|\le\delta t\}}\Big]\le
  \begin{cases}
\Big(2^9\frac{\sqrt{\mu t}}{L}\Big)^q & \mbox{if }q\le 1,\\[0.3em]
2^{9q^2}\Big\{\frac{(\mu t)^{\frac{q}{2}}}{L}+\sqrt{\frac{t}{\mu}}\int_0^{\delta t}qx^{q-1}\prob[\Delta_1>x]\,dx\Big\} & \mbox{if }q>1.
\end{cases}
\end{equation*}
\end{lemma}

\begin{proof}[Proof of Lemma \ref{bound_aux_ql}]
In order to solve the case $q\le 1$ it suffices to address the
instance $q=1$, so that we only need to consider situations where
$q\ge 1$. In fact, Jensen's inequality gives for each positive $q<1$
\begin{equation*}
\EE\Big[|X_t|^q\mathds{1}_{\{L\sqrt{\mu t}<|X_t|\le\delta t\}}\Big]\le\Big(\EE\big[|X_t|\mathds{1}_{\{L\sqrt{\mu t}<|X_t|\le\delta t\}}\big]\Big)^q.
\end{equation*}

Assume $q\ge 1$ and set $b_t=:\lfloor 2t/\mu\rfloor+1$. As in the
proof of Theorem \ref{atail}, we use $b_t$ to cut off the number of
collisions by time $t$. The left-right symmetry of the model and the
fact that $|X_t|\le t$ give for all $t>0$
\begin{align}
\nonumber
\EE\Big[|X_t|^q\,\mathds{1}_{\{L\sqrt{\mu t}<|X_t|\le\delta t\}}\Big]&=
2\,\EE\Big[X_t^q\,\mathds{1}_{\big\{L\sqrt{\mu t}<X_t\le\delta t,\,X_t>0\big\}}\Big]\\
&\le 2\,\EE\Big[X_t^q\,\mathds{1}_{\big\{L\sqrt{\mu t}<X_t\le\delta t,\,X_t>0,\,N_t\le b_t\big\}}\Big]+2t^q\,\PP\big[N_t>b_t\big].
\label{bound_aux_ql_11}
\end{align}
Let us recall that the condition $X_t>0$ entails $R_{N_t}\ge 1$ and
$X_t\le\sum_{k=1}^{R_{N_t}}\Delta_k$, as discussed at the beginning of
Section \ref{proof:atail}. It follows that
$X_t\le\sum_{k=1}^{R_{N_t}}\min\{\Delta_k,\delta t\}$ if
$0<X_t\le\delta t$. Set $\mathsf{D}_{k,z}:=\min\{\Delta_k,z\}$ for
brevity, $z$ being a real number.  These arguments in combination with
(\ref{bound_aux_ql_11}) yield the bound
\begin{align}
\nonumber
\EE\Big[|X_t|^q\,\mathds{1}_{\{L\sqrt{\mu t}<|X_t|\le\delta t\}}\Big]
&\le 2\,\EE\Bigg[\Bigg(\sum_{k=1}^{R_{N_t}}\mathsf{D}_{k,\delta t}\Bigg)^q
  \mathds{1}_{\big\{\sum_{k=1}^{R_{N_t}}\mathsf{D}_{k,\delta t}>L\sqrt{\mu t},\,R_{N_t}\ge 1,\,N_t\le b_t\big\}}\Bigg]\\
\nonumber
    &+2t^q\,\PP\big[N_t>b_t\big]\\
    \nonumber
    &\le 2\,\EE\Bigg[\Bigg(\sum_{k=1}^{M_{b_t}^+}\mathsf{D}_{k,\delta t}\Bigg)^q
      \mathds{1}_{\big\{\sum_{k=1}^{M_{b_t}^+}\mathsf{D}_{k,\delta t}>L\sqrt{\mu t},\,M_{b_t}^+\ge 1\big\}}\Bigg]\\
    \nonumber
    &+2t^q\,\PP\big[N_t>b_t\big]\\
    &=2^q\sum_{m=1}^{+\infty}\mathcal{H}_t\{m\}P\big[M_{b_t}^+=m\big]+2t^q\,\PP\big[N_t>b_t\big],
 \label{amoment_ql1}
\end{align}
where we have set for each $m\ge 1$ and $t>0$
\begin{equation}
  \mathcal{H}_t\{m\}:=2^{1-q}\,\Ex\Bigg[\Bigg(\sum_{k=1}^m\mathsf{D}_{k,\delta t}\Bigg)^q\mathds{1}_{\big\{\sum_{k=1}^m\mathsf{D}_{k,\delta t}>L\sqrt{\mu t}\big\}}\Bigg].
\label{def:Hmoment}
\end{equation}
Bound (\ref{amoment_ql1}) is the starting point to prove the lemma,
and we now need to find an estimate of $\mathcal{H}_t\{m\}$.

One estimate for the moment of order $q\ge 1$ of a sum of
i.i.d.\ random variables has been suggested by Rosenthal.  An
application of Rosenthal's inequality (see \cite{Rosenthal1}, Lemma 1)
to the i.i.d.\ positive bounded variables $\mathsf{D}_{1,\delta
  t},\mathsf{D}_{2,\delta t},\ldots$ yields for every $m\ge 1$ and
$t>0$
\begin{equation}
\Ex\Bigg[\Bigg(\sum_{k=1}^m\mathsf{D}_{k,\delta t}\Bigg)^q\,\Bigg]
\le 2^{q^2}\max\bigg\{\Big(m\,\Ex\big[\mathsf{D}_{1,\delta t}\big]\Big)^q,m\,\Ex\big[\mathsf{D}_{1,\delta t}^q\big]\bigg\}
\le 2^{q^2}\bigg\{(\mu m)^q+m\,\Ex\big[\mathsf{D}_{1,\delta t}^q\big]\bigg\},
  \label{Rosenthal_ine}
\end{equation}
where we used the fact that $\Ex[\mathsf{D}_{1,\delta
    t}]\le\Ex[\Delta_1]=\mu$ for all $t$ to get the last bound.
Unfortunately, this inequality does not consider the constraint
$\sum_{k=1}^m\mathsf{D}_{k,\delta t}>L\sqrt{\mu t}$ of definition
(\ref{def:Hmoment}), which cannot be lost to get at the lemma. We make
some preliminary manipulations to the Rosenthal's inequality
(\ref{Rosenthal_ine}) in order to keep this constraint.  We resort at
first to the inequality $(w+z)^{q-1}\le 2^{q-1}(w^{q-1}+z^{q-1})$
valid for all numbers $w\ge 0$ and $z\ge 0$ as $q\ge 1$ to write down
for each $m\ge 2$ and $t>0$ the bound
\begin{align}
\nonumber
\mathcal{H}_t\{m\}&=2^{1-q}\sum_{l=1}^m\Ex\Bigg[\Bigg(\sum_{k=1}^m\mathsf{D}_{k,\delta t}\Bigg)^{q-1}
  \mathsf{D}_{l,\delta t}\,\mathds{1}_{\big\{\sum_{k=1}^m\mathsf{D}_{k,\delta t}>L\sqrt{\mu t}\big\}}\Bigg]\\
\nonumber
&=2^{1-q}\,m\,\Ex\Bigg[\Bigg(\sum_{k=1}^m\mathsf{D}_{k,\delta t}\Bigg)^{q-1}
 \mathsf{D}_{m,\delta t}\,\mathds{1}_{\big\{\sum_{k=1}^m\mathsf{D}_{k,\delta t}>L\sqrt{\mu t}\big\}}\Bigg]\\
\nonumber
&\le m\,\Ex\Bigg[\Bigg(\sum_{k=1}^{m-1}\mathsf{D}_{k,\delta t}\Bigg)^{q-1}
 \mathsf{D}_{m,\delta t}\,\mathds{1}_{\big\{\sum_{k=1}^m\mathsf{D}_{k,\delta t}>L\sqrt{\mu t}\big\}}\Bigg]
+m\,\Ex\Bigg[\mathsf{D}_{m,\delta t}^q\mathds{1}_{\big\{\sum_{k=1}^m\mathsf{D}_{k,\delta t}>L\sqrt{\mu t}\big\}}\Bigg].
\end{align}
The constraint $\sum_{k=1}^m\mathsf{D}_{k,\delta t}>L\sqrt{\mu t}$
implies that either $\sum_{k=1}^{m-1}\mathsf{D}_{k,\delta
  t}>L\sqrt{\mu t}/2$ or $\Delta_m\ge\mathsf{D}_{m,\delta
  t}>L\sqrt{\mu t}/2$. This observation together with the inequality
$\mathds{1}_{\{\sum_{k=1}^{m-1}\mathsf{D}_{k,\delta t}>L\sqrt{\mu
    t}/2\}}\le(2/L\sqrt{\mu t})\sum_{k=1}^{m-1}\mathsf{D}_{k,\delta
  t}$ yields
\begin{align}
\nonumber
\mathcal{H}_t\{m\}&\le m\,\Ex\Bigg[\Bigg(\sum_{k=1}^{m-1}\mathsf{D}_{k,\delta t}\Bigg)^{q-1}
  \Delta_m\,\mathds{1}_{\big\{\sum_{k=1}^{m-1}\mathsf{D}_{k,\delta t}>L\sqrt{\mu t}/2\big\}}\Bigg]+m\,\Ex\Bigg[\Bigg(\sum_{k=1}^{m-1}\mathsf{D}_{k,\delta t}\Bigg)^{q-1}
  \Delta_m\,\mathds{1}_{\{\Delta_m>L\sqrt{\mu t}/2\}}\Bigg]\\
\nonumber
&+m\,\Ex\bigg[\mathsf{D}_{m,\delta t}^q\mathds{1}_{\big\{\sum_{k=1}^{m-1}\mathsf{D}_{k,\delta t}>L\sqrt{\mu t}/2\big\}}\bigg]
+m\,\Ex\bigg[\mathsf{D}_{m,\delta t}^q\mathds{1}_{\{\Delta_m>L\sqrt{\mu t}/2\}}\bigg]\\
\nonumber
&\le \frac{2}{L}\sqrt{\frac{\mu}{t}}\, m\,\Ex\Bigg[\Bigg(\sum_{k=1}^{m-1}\mathsf{D}_{k,\delta t}\Bigg)^q\,\Bigg]
+m\,\Ex\Big[\Delta_1\,\mathds{1}_{\{\Delta_1>L\sqrt{\mu t}/2\}}\Big]\,\Ex\Bigg[\Bigg(\sum_{k=1}^{m-1}\mathsf{D}_{k,\delta t}\Bigg)^{q-1}\Bigg]\\
&+\frac{2}{L}\sqrt{\frac{\mu}{t}}\, m(m-1)\,\Ex\Big[\mathsf{D}_{1,\delta t}^q\Big]
+m\,\Ex\Big[\mathsf{D}_{1,\delta t}^q\mathds{1}_{\{\Delta_1>L\sqrt{\mu t}/2\}}\Big].
\label{amoment_ql2}
\end{align}
We notice that for any $m\ge 2$ and $t>0$
\begin{equation}
  m\,\Ex\Bigg[\Bigg(\sum_{k=1}^{m-1}\mathsf{D}_{k,\delta t}\Bigg)^{q-1}\Bigg]\le
  \frac{\Ex\Big[\big(\sum_{k=1}^m\mathsf{D}_{k,\delta t}\big)^q\Big]}{\Ex[\mathsf{D}_{1,\delta t}]},
\label{amoment_ql3}
\end{equation}
which follows from the lower bound
\begin{align}
 \nonumber
 \Ex\Bigg[\Bigg(\sum_{k=1}^m\mathsf{D}_{k,\delta t}\Bigg)^q\,\Bigg]&=\sum_{l=1}^m\Ex\Bigg[\Bigg(\sum_{k=1}^m\mathsf{D}_{k,\delta t}\Bigg)^{q-1}\mathsf{D}_{l,\delta t}\Bigg]
 \ge\sum_{l=1}^m\Ex\Bigg[\Bigg(\sum_{\substack{k=1\\k\ne l}}^m\mathsf{D}_{k,\delta t}\Bigg)^{q-1}\mathsf{D}_{l,\delta t}\Bigg]\\
\nonumber
&=m\,\Ex\big[\mathsf{D}_{1,\delta t}\big]\,\Ex\Bigg[\Bigg(\sum_{k=1}^{m-1}\mathsf{D}_{k,\delta t}\Bigg)^{q-1}\Bigg].
\end{align}
Then, by combining (\ref{amoment_ql2}) with (\ref{amoment_ql3}) we
obtain for all $m\ge 2$ and $t>0$
\begin{align}
\nonumber
\mathcal{H}_t\{m\}&\le\Bigg(\frac{2}{L}\sqrt{\frac{\mu}{t}}\, m+\frac{\Ex\big[\Delta_1\,\mathds{1}_{\{\Delta_1>L\sqrt{\mu t}/2\}}\big]}
        {\Ex[\mathsf{D}_{1,\delta t}]}\Bigg)\Ex\Bigg[\Bigg(\sum_{k=1}^m\mathsf{D}_{k,\delta t}\Bigg)^q\,\Bigg]\\
&+\frac{2}{L}\sqrt{\frac{\mu}{t}}\, m^2\,\Ex\Big[\mathsf{D}_{1,\delta t}^q\Big]
        +m\,\Ex\Big[\mathsf{D}_{1,\delta t}^q\mathds{1}_{\{\Delta_1>L\sqrt{\mu t}/2\}}\Big].
        \label{amoment_ql4}
\end{align}
This estimate also holds for $m=1$ due to the last term and the fact
that $q\ge 1$.  We are now in the condition to invoke Rosenthal's
inequality. By plugging bound (\ref{Rosenthal_ine}) in
(\ref{amoment_ql4}) we find for every $m\ge 1$ and $t>0$
\begin{align}
\nonumber
\mathcal{H}_t\{m\}&\le\Bigg(\frac{2^{q^2+1}}{L}\sqrt{\frac{\mu}{t}}\, m+2^{q^2}\frac{\Ex\big[\Delta_1\,\mathds{1}_{\{\Delta_1>L\sqrt{\mu t}/2\}}\big]}
        {\Ex[\mathsf{D}_{1,\delta t}]}\Bigg)\bigg\{(\mu m)^q+m\,\Ex\big[\mathsf{D}_{1,\delta t}^q\big]\bigg\}\\
        &+\frac{2}{L}\sqrt{\frac{\mu}{t}}\, m^2\,\Ex\Big[\mathsf{D}_{1,\delta t}^q\Big]
        +m\,\Ex\Big[\mathsf{D}_{1,\delta t}^q\mathds{1}_{\{\Delta_1>L\sqrt{\mu t}/2\}}\Big].
        \label{amoment_ql5}
\end{align}
This estimate improves the Rosenthal's inequality as it keeps track of
the constraint $\sum_{k=1}^m\mathsf{D}_{k,\delta t}>L\sqrt{\mu t}$.

We come back to (\ref{amoment_ql1}) with the strength of
(\ref{amoment_ql5}). When combining the upper bounds
(\ref{amoment_ql5}) with (\ref{amoment_ql1}) we use the inequality
$E[(M_{b_t}^+)^p]\le 2(pb_t)^{\frac{p}{2}}$ valid for any $p>0$ by
Lemma \ref{moment:Mn}.  Thus, we obtain for all $t>0$
\begin{align}
\nonumber
\EE\Big[|X_t|^q\,\mathds{1}_{\{L\sqrt{\mu t}<|X_t|\le\delta t\}}\Big]&\le\frac{2^{q^2+q+2}}{L}\sqrt{\frac{\mu}{t}}
\bigg\{\mu^q(q+1)^{\frac{q+1}{2}}b_t^{\frac{q+1}{2}}+2b_t\,\Ex\big[\mathsf{D}_{1,\delta t}^q\big]\bigg\}\\
        \nonumber
        &+2^{q^2+q+1}\frac{\Ex\big[\Delta_1\,\mathds{1}_{\{\Delta_1>L\sqrt{\mu t}/2\}}\big]}
        {\Ex[\mathsf{D}_{1,\delta t}]}\bigg\{\mu^qq^{\frac{q}{2}}b_t^{\frac{q}{2}}+\sqrt{b_t}\,\Ex\big[\mathsf{D}_{1,\delta t}^q\big]\bigg\}\\
\nonumber
        &+\frac{2^{q+3}}{L}\sqrt{\frac{\mu}{t}}\,b_t\,\Ex\Big[\mathsf{D}_{1,\delta t}^q\Big]
+2^{q+1}\sqrt{b_t}\,\Ex\Big[\mathsf{D}_{1,\delta t}^q\mathds{1}_{\{\Delta_1>L\sqrt{\mu t}/2\}}\Big]\\
&+2t^q\,\PP\big[N_t>b_t\big],
\label{amoment_ql6}
\end{align}
where $\mathsf{D}_{1,\delta t}:=\min\{\Delta_1,\delta t\}$. Since
$\lim_{t\uparrow+\infty}\Ex[\mathsf{D}_{1,\delta t}]=\mu$ and
$\lim_{t\uparrow+\infty}\Ex[\Delta_1\mathds{1}_{\{\Delta_1>L\sqrt{\mu
      t}/2\}}]=0$ by the dominated convergence theorem, we can find a
positive real number $t_o$ with the property that
$\Ex[\Delta_1\mathds{1}_{\{\Delta_1>L\sqrt{\mu
      t}/2\}}]/\Ex[\mathsf{D}_{1,\delta t}]\le 1/L$ and $b_t:=\lfloor
2t/\mu\rfloor+1\le 4t/\mu$ for $t>t_o$. Since there exists a real
number $\kappa>0$ such that
\begin{equation*}
\PP\big[N_t>b_t\big]=\PP\big[T_{b_t}\le t\big]\le \PP\big[T_{b_t}\le \mu b_t/2\big]\le e^{-\kappa b_t^{1/3}}
\end{equation*}
for all sufficiently large $t$ by Lemma \ref{Ttailann}, we can take
$t_o$ large enough to have $2t^{\frac{q}{2}}\PP[T_{b_t}\le t]\le
\mu^{\frac{q}{2}}/L$ as well for $t>t_o$. These arguments are used to
simplify (\ref{amoment_ql6}) and to finally state that for all $t>t_o$
\begin{align}
\nonumber
\EE\Big[|X_t|^q\,\mathds{1}_{\{L\sqrt{\mu t}<|X_t|\le\delta t\}}\Big]&\le\frac{2^{q^2+q+3}}{L}
\bigg\{2^q(q+1)^{\frac{q+1}{2}}(\mu t)^{\frac{q}{2}}+4\sqrt{\frac{t}{\mu}}\,\Ex\big[\mathsf{D}_{1,\delta t}^q\big]\bigg\}\\
        \nonumber
        &+\frac{2^{q^2+q+1}}{L}\bigg\{2^q q^{\frac{q}{2}}  (\mu t)^{\frac{q}{2}}+2\sqrt{\frac{t}{\mu}}\,\Ex\big[\mathsf{D}_{1,\delta t}^q\big]\bigg\}\\
        &+\frac{2^{q+5}}{L}\sqrt{\frac{t}{\mu}}\,\Ex\Big[\mathsf{D}_{1,\delta t}^q\Big]
        +2^{q+2}\sqrt{\frac{t}{\mu}}\,\Ex\Big[\mathsf{D}_{1,\delta t}^q\mathds{1}_{\{\Delta_1>L\sqrt{\mu t}/2\}}\Big]+\frac{(\mu t)^{\frac{q}{2}}}{L}.
\label{amoment_ql7}
\end{align}

Let us verify the statement of the lemma.  When $q=1$ and $t>t_o$, the
inequality $\Ex[\Delta_1\mathds{1}_{\{\Delta_1>L\sqrt{\mu
      t}/2\}}]/\Ex[\mathsf{D}_{1,\delta t}]\le 1/L$ gives
$\Ex[\mathsf{D}_{1,\delta t}^q\mathds{1}_{\{\Delta_1>L\sqrt{\mu
      t}/2\}}]\le \mu/L$, and (\ref{amoment_ql7}) shows that
\begin{equation}
\EE\Big[|X_t|\,\mathds{1}_{\{L\sqrt{\mu t}<|X_t|\le\delta t\}}\Big]\le\frac{2^8+2^6+2^5+2^3+1}{L}\sqrt{\mu t}\le 2^9\frac{\sqrt{\mu t}}{L}.
\label{amoment_ql8}
\end{equation}
When $q>1$ and $t>t_o$, (\ref{amoment_ql7}) in combination with the
inequality $\Ex[\mathsf{D}_{1,\delta
    t}^q\mathds{1}_{\{\Delta_1>L\sqrt{\mu t}/2\}}]\le
\Ex[\mathsf{D}_{1,\delta t}^q]$ and the fact that $L\ge 1$ yields
\begin{align}
\nonumber
\EE\Big[|X_t|^q\,\mathds{1}_{\{L\sqrt{\mu t}<|X_t|\le\delta t\}}\Big]&\le\frac{2^{q^2+2q+3}(q+1)^{\frac{q+1}{2}}+2^{q^2+2q+1}q^{\frac{q}{2}}+1}{L}(\mu t)^{\frac{q}{2}}\\
\nonumber
&+(2^{q^2+q+5}+2^{q^2+q+2}+2^{q+5}+2^{q+2})\sqrt{\frac{t}{\mu}}\,\Ex\big[\mathsf{D}_{1,\delta t}^q\big]\\
&\le 2^{9q^2}\bigg\{\frac{(\mu t)^{\frac{q}{2}}}{L}+\sqrt{\frac{t}{\mu}}\,\Ex\big[\mathsf{D}_{1,\delta t}^q\big]\bigg\}.
\label{amoment_ql9}
\end{align}
Bounds (\ref{amoment_ql8}) and (\ref{amoment_ql9}) prove the lemma as
Fubini's theorem tells us that
\begin{equation*}
  \Ex\big[\mathsf{D}_{1,\delta t}^q\big]=\Ex\big[\min\{\Delta_1,\delta t\}^q\big]=
  \Ex\bigg[\int_0^{\delta t}qx^{q-1}\mathds{1}_{\{\Delta_1>x\}}dx\bigg]=\int_0^{\delta t}qx^{q-1}\prob[\Delta_1>x]\,dx.
\qedhere
\end{equation*}
\end{proof}

\subsection{The case $q<2\alpha-1$ or $q=2\alpha-1$ and $\alpha=1$}
\label{sec:qsmall}

Assume that $q<2\alpha-1$ or that $q=2\alpha-1$ and $\alpha=1$, namely
that $q\le 1$ or that $q\in(1,2\alpha-1)$ if $\alpha>1$. Based on
Lemma \ref{bound_aux_ql}, we claim that for each $L\ge 1$ and all
sufficiently large $t$
\begin{equation}
  \EE\Big[|X_t|^q\,\mathds{1}_{\{|X_t|>L\sqrt{\mu t}\}}\Big]\le 2^{9q^2+9q}\frac{(\mu t)^{\frac{q}{2}}}{L^{q\wedge 1}}.
\label{bound_aux_ql_mod}
\end{equation}
In fact, recalling that $|X_t|\le t$, for $q\le 1$ this bound
immediately follows from Lemma \ref{bound_aux_ql} with $\delta=1$. For
$\alpha>1$ and $q\in(1,2\alpha-1)$, when invoking Lemma
\ref{bound_aux_ql} with $\delta=1$ we need to observe that
$\sqrt{t/\mu}\,\int_0^t qx^{q-1}\prob[\Delta_1>x]\,dx\le(\mu
t)^{\frac{q}{2}}/L$ for each $L\ge 1$ and all sufficiently large
$t$. Indeed, in this case there exists a positive number $\eta$ such
that $q-1-\eta>0$ and $2\alpha-1-q-\eta>0$.  Then
\begin{equation*}
\int_0^t qx^{q-1}\prob\big[\Delta_1>x\big]dx\le t^{\frac{q-1-\eta}{2}}\int_0^{+\infty} q\,x^{-\frac{2\alpha-1-q-\eta}{2}-1}\ell(x)\,dx,
\end{equation*}
the integral in the r.h.s.\ being finite as
$\lim_{x\uparrow+\infty}x^{-\gamma}\ell(x)=0$ for every $\gamma>0$ (see
\cite{RV}, Proposition 1.3.6).

We are now ready to prove Theorem \ref{annealed_moment} for
$q<2\alpha-1$ or $q=2\alpha-1$ and $\alpha=1$. Formula
(\ref{amoment_0}) with $\delta=1$, limit (\ref{normalfl}), and the
bound (\ref{bound_aux_ql_mod}) give
\begin{equation*}
\limsup_{t\uparrow+\infty}\frac{\EE[|X_t|^q]}{t^{\frac{q}{2}}}\le\sqrt{\frac{2\mu^q}{\pi}}\int_0^L \xi^q e^{-\frac{1}{2}\xi^2} d\xi
+2^{9q^2+9q}\frac{\mu^{\frac{q}{2}}}{L^{q\wedge 1}}
\end{equation*}
and
\begin{equation*}
\liminf_{t\uparrow+\infty}\frac{\EE[|X_t|^q]}{t^{\frac{q}{2}}}\ge\sqrt{\frac{2\mu^q}{\pi}}\int_0^L \xi^q e^{-\frac{1}{2}\xi^2} d\xi.
\end{equation*}
Theorem \ref{annealed_moment} is then demonstrated by sending $L$ to
infinity:
\begin{equation*}
\lim_{t\uparrow+\infty}\frac{\EE[|X_t|^q]}{t^{\frac{q}{2}}}
  =\sqrt{\frac{2\mu^q}{\pi}}\int_0^{+\infty} \xi^q e^{-\frac{1}{2}\xi^2} d\xi=\sqrt{\frac{(2\mu)^q}{\pi}}\,\Gamma\bigg(\frac{q+1}{2}\bigg)=:g_q.
  \end{equation*}

\subsection{The case $q=2\alpha-1$ and $\alpha>1$ or $q>2\alpha-1$}
\label{sec:qlarge}

Assume that $q=2\alpha-1$ and $\alpha>1$ or that $q>2\alpha-1$. In
this case the large fluctuations of the displacement come into play as
here we show that
\begin{equation}
\lim_{t\uparrow+\infty}\frac{\EE[|X_t|^q]}{g_qt^{\frac{q}{2}}+d_qt^{q+\frac{1}{2}}\prob[\Delta_1>t]}=1,
\label{aux_qlarge}
\end{equation}
$g_q$ and $d_q$ being the positive coefficients introduced by Theorem
\ref{annealed_moment}.  This limit is exactly Theorem
\ref{annealed_moment} when $q=2\alpha-1$ and $\alpha>1$. Regarding the
instance $q>2\alpha-1$, Theorem \ref{annealed_moment} follows from
(\ref{aux_qlarge}) by observing that if $q>2\alpha-1$, then the fact
that $\lim_{x\uparrow+\infty}x^{\gamma}\ell(x)=+\infty$ for every
$\gamma>0$ (see \cite{RV}, Proposition 1.3.6) gives
\begin{equation*}
  \lim_{t\uparrow+\infty}\frac{t^{\frac{q}{2}}}{t^{q+\frac{1}{2}}\prob[\Delta_1>t]}=
  \lim_{t\uparrow+\infty}\frac{1}{t^{\frac{q-2\alpha+1}{2}}\ell(t)}=0.
\end{equation*}

Let us verify (\ref{aux_qlarge}). To begin with, we notice that both
when $q=2\alpha-1$ and $\alpha>1$ or when $q>2\alpha-1$ we have
$q>\alpha\ge 1$. Under the condition $q>\alpha$ we find
\begin{equation}
  \lim_{\delta\downarrow 0}\lim_{t\uparrow+\infty} \frac{\EE[|X_t|^q\mathds{1}_{\{|X_t|>\delta t\}}]}{t^{q+\frac{1}{2}}\prob[\Delta_1>t]}=2\int_0^1 qx^{q-1}F(x)\,dx=d_q<+\infty,
\label{aux_qlarge_2}
\end{equation}
where $F$ is the function of Theorem \ref{atail} that describes the
large fluctuations. In fact, since $0<f_\alpha(x)\le x^{-\alpha}$ for
all $x\in(0,1)$ by Lemma \ref{prop_funalpha}, Fubini's theorem and the
change of variable $x\mapsto\xi x$ yield
\begin{align}
  \nonumber
  2\int_0^1 qx^{q-1}F(x)\,dx&=\int_0^1 dx\,qx^{q-1}\sqrt{\frac{2}{\pi\mu}}\int_x^1\frac{d\xi}{\xi^\alpha\sqrt{1-\xi}}\,
      f_\alpha\bigg(\frac{x}{\xi}\bigg)\\
\nonumber
&=\sqrt{\frac{2}{\pi\mu}}\int_0^1\frac{d\xi}{\xi^\alpha\sqrt{1-\xi}}\int_0^\xi dx\,qx^{q-1}f_\alpha\bigg(\frac{x}{\xi}\bigg)\\
\nonumber
&=\sqrt{\frac{2}{\pi\mu}}\int_0^1\frac{\xi^{q-\alpha}d\xi}{\sqrt{1-\xi}}\int_0^1 dx\,qx^{q-1}f_\alpha(x)<+\infty.
\end{align}
On the other hand, the theory of the Euler beta function $\mathrm{B}$
tells us that
\begin{equation*}
\int_0^1\frac{\xi^{q-\alpha}d\xi}{\sqrt{1-\xi}}=\mathrm{B}(q-\alpha+1,1/2)=\frac{\Gamma(q-\alpha+1)\sqrt{\pi}}{\Gamma(q-\alpha+3/2)}
\end{equation*}
since for all reals $z>0$ and $\zeta>0$
\begin{equation*}
\mathrm{B}(z,\zeta):=\int_0^1\xi^{z-1}(1-\xi)^{\zeta-1} d\xi=\frac{\Gamma(z)\Gamma(\zeta)}{\Gamma(z+\zeta)}.
\end{equation*}
Thus, we have
\begin{equation*}
  2\int_0^1 qx^{q-1}F(x)\,dx=\sqrt{\frac{2}{\mu}}\frac{\Gamma(q-\alpha+1)}{\Gamma(q-\alpha+3/2)}\int_0^1 qx^{q-1}f_\alpha(x)\,dx=:
  d_q<+\infty.
\end{equation*}
By combining limit (\ref{largefl}) with this identity we obtain
(\ref{aux_qlarge_2}) since $\lim_{\delta\downarrow
  0}\delta^qF(\delta)=0$ as $q>\alpha$ and
$F(\delta)\le\delta^{-\alpha}/\sqrt{\mu}$ due to the bound
$f_\alpha(x)\le x^{-\alpha}$ for $x\in(0,1)$.

Another consequence of the fact that $q>\alpha$ is the following,
which is a general result about truncated moments of slowly varying
functions (see \cite{Feller2} chapter VIII.9, Theorem 1):
\begin{equation}
 \lim_{t\uparrow+\infty}\frac{\int_0^t qx^{q-1}\prob[\Delta_1>x]\,dx}{t^q\,\prob[\Delta_1>t]}=
   \lim_{t\uparrow+\infty}\frac{\int_0^t qx^{q-\alpha-1}\ell(x)\,dx}{t^{q-\alpha}\ell(t)}=\frac{q}{q-\alpha}.
     \label{limFeller}
\end{equation}
Limit (\ref{normalfl}) shows that
\begin{equation}
  \lim_{L\uparrow+\infty}\lim_{t\uparrow+\infty}\frac{\EE[|X_t|^q\mathds{1}_{\{|X_t|\le L\sqrt{\mu t}\}}]}{t^{\frac{q}{2}}}
  =\sqrt{\frac{(2\mu)^q}{\pi}}\,\Gamma\bigg(\frac{q+1}{2}\bigg)=:g_q.
\label{aux_qlarge_3}
\end{equation}

We are now ready to prove (\ref{aux_qlarge}).  Pick $\epsilon>0$. Due
to limits (\ref{aux_qlarge_3}), (\ref{aux_qlarge_2}), and
(\ref{limFeller}) and due to the asymptotic scale invariance of slowly
varying functions, there exist three positive numbers $L\ge 1\vee
(1/\epsilon)$, $\delta<1\wedge (\epsilon^{\frac{1}{q-\alpha}})$, and
$t_o$ such that for all $t>t_o$
\begin{enumerate}[(a)]
\item $(1-\epsilon)g_qt^{\frac{q}{2}}\le\EE[|X_t|^q\mathds{1}_{\{|X_t|\le L\sqrt{\mu t}\}}]\le(1+\epsilon)g_qt^{\frac{q}{2}}$;
\item $(1-\epsilon)d_qt^{q+\frac{1}{2}}\prob[\Delta_1>t]\le\EE[|X_t|^q\mathds{1}_{\{|X_t|>\delta t\}}]\le(1+\epsilon)d_qt^{q+\frac{1}{2}}\prob[\Delta_1>t]$;
\item $\int_0^{\delta t}qx^{q-1}\prob[\Delta_1>x]\,dx\le\frac{2q}{q-\alpha}(\delta t)^q\,\prob[\Delta_1>\delta t]$ and
  $\prob[\Delta_1>\delta t]\le 2\delta^{-\alpha}\prob[\Delta_1>t]$.
\end{enumerate}
Since $q>1$ as $q>\alpha$, Lemma \ref{bound_aux_ql} and property (c)
give for every $t>t_o$ large enough
\begin{align}
  \nonumber
  \EE\Big[|X_t|^q\,\mathds{1}_{\{L\sqrt{\mu t}<|X_t|\le\delta t\}}\Big]&\le
  2^{9q^2}\bigg\{\frac{(\mu t)^{\frac{q}{2}}}{L}+\frac{1}{\sqrt{\mu}}\frac{4q}{q-\alpha}\delta^{q-\alpha} t^{q+\frac{1}{2}}\prob[\Delta_1>t]\bigg\}\\
\nonumber
&\le \epsilon\,2^{9q^2}\bigg\{(\mu t)^{\frac{q}{2}}+\frac{1}{\sqrt{\mu}}\frac{4q}{q-\alpha}t^{q+\frac{1}{2}}\prob[\Delta_1>t]\bigg\}\\
\nonumber
&\le\epsilon\max\bigg\{\frac{2^{9q^2}\mu^{\frac{q}{2}}}{g_q},\frac{2^{9q^2}}{d_q\sqrt{\mu}}\frac{4q}{q-\alpha}\bigg\}
\bigg\{g_qt^{\frac{q}{2}}+d_qt^{q+\frac{1}{2}}\prob[\Delta_1>t]\bigg\},
\end{align}
the second inequality being a consequence of the fact that $L\ge
1/\epsilon$ and $\delta<\epsilon^{\frac{1}{q-\alpha}}$ by
hypothesis. Then, by also invoking properties (a) and (b), it follows
from formula (\ref{amoment_0}) that for all sufficiently large $t>t_o$
\begin{equation*}
  1-\epsilon\le\frac{\EE[|X_t|^q]}{g_qt^{\frac{q}{2}}+d_qt^{q+\frac{1}{2}}\prob[\Delta_1>t]}\le 1+\epsilon+
  \epsilon \max\bigg\{\frac{2^{9q^2}\mu^{\frac{q}{2}}}{g_q},\frac{2^{9q^2}}{d_q\sqrt{\mu}}\frac{4q}{q-\alpha}\bigg\}.
\end{equation*}
The arbitrariness of $\epsilon$ proves (\ref{aux_qlarge}).

\section{Quenched fluctuations: proof of Theorem \ref{qtail}}
\label{qtail_proof}

Here we prove Theorem \ref{qtail}.  The strong law of large numbers
for i.i.d.\ random variables and Lemma \ref{Ttailq} ensure us that
there exist an event $\Omega_o\in\mathscr{F}$ with $\prob[\Omega_o]=1$
and a positive number $\kappa_o$ such that for each
$\omega\in\Omega_o$
\begin{enumerate}[$(a)$]
\item $\lim_{r\uparrow+\infty}(1/r)\sum_{k=1}^r\Delta_k^\omega=\mu$ and $\lim_{r\uparrow+\infty}(1/r)\sum_{k=1-r}^0\Delta_k^\omega=\mu$;
\item $P[T_n^\omega\le \mu n/2]\le e^{-\kappa_o n^{1/3}}$ for all sufficiently large $n$. 
\end{enumerate}
Let $\kappa$ be the minimum between $1/(8\sqrt{\mu})$ and $\kappa_o/\sqrt{\mu}$. We
are going to show that for every $\omega\in\Omega_o$ and $x\in(0,1]$
\begin{equation}
  \limsup_{t\uparrow+\infty}\frac{1}{\sqrt{xt}}\ln P\big[|X_t^\omega|>xt\big]\le -\kappa.
\label{qtail7}
\end{equation}

Fix $\omega\in\Omega_o$ and $x\in(0,1]$. Consider an integer $b_t\ge
  2t/\mu$ to cut off the number of collisions by time $t$, which will
  be chosen later on. To begin with, let us recall that $0\le
  X_t^\omega\le \sum_{k=1}^{R_{N_t^\omega}}\Delta_k^\omega$ or
  $-\sum_{k=R_{N_t^\omega}}^0\Delta_k^\omega\le X_t^\omega\le 0$
  according to $R_{N_t^\omega}\ge 1$ or $R_{N_t^\omega}\le 0$, as we
  have seen at the beginning of Section \ref{proof:atail}. Then, for
  all $t>0$ we can write down the bound
\begin{align}
  \nonumber
  P\big[|X_t^\omega|>xt\big]&\le P\big[|X_t^\omega|>xt,\,N_t^\omega\le b_t\big]+P\big[N_t^\omega>b_t\big]\\
  \nonumber
  &\le P\Bigg[\sum_{k=1}^{R_{N_t^\omega}}\Delta_k^\omega>xt,\,R_{N_t^\omega}\ge 1,\,N_t^\omega\le b_t\Bigg]\\
  &+P\Bigg[\sum_{k=R_{N_t^\omega}}^0\Delta_k^\omega>xt,\,R_{N_t^\omega}\le 0,\,N_t^\omega\le b_t\Bigg]
  +P\big[N_t^\omega>b_t\big].
  \label{qtail1}
\end{align}
Property $(a)$ entails that there exists a positive integer $r_o$ such
that $\sum_{k=1}^r\Delta_k^\omega\le 2\mu r$ and
$\sum_{k=1-r}^0\Delta_k^\omega\le 2\mu r$ for $r>r_o$.  Let $t_o$ be
the maximum between $(1/x)\sum_{k=1}^{r_o}\Delta_k^\omega$ and
$(1/x)\sum_{k=1-r_o}^0\Delta_k^\omega$. If $t>t_o$, then
$xt>xt_o\ge\sum_{k=1}^{r_o}\Delta_k^\omega$, so that the conditions
$R_{N_t^\omega}\ge 1$ and
$\sum_{k=1}^{R_{N_t^\omega}}\Delta_k^\omega>xt$ necessarily imply
$R_{N_t^\omega}>r_o$, which in turn gives
$\sum_{k=1}^{R_{N_t^\omega}}\Delta_k^\omega\le 2\mu
R_{N_t^\omega}$. Similarly, the conditions $R_{N_t^\omega}\le 0$ and
$\sum_{k=R_{N_t^\omega}}^0\Delta_k^\omega>xt$ with $t>t_o$ entail
$R_{N_t^\omega}<1-r_o$ as $xt>xt_o\ge\sum_{k=1-r_o}^0\Delta_k^\omega$,
which in turn yields $\sum_{k=R_{N_t^\omega}}^0\Delta_k^\omega\le
2\mu(1-R_{N_t^\omega})$. Thus, starting from (\ref{qtail1}), we see
that for every $t>t_o$
\begin{align}
  \nonumber
  P\big[|X_t^\omega|>xt\big]&\le P\big[2\mu R_{N_t^\omega}>xt,\,N_t^\omega\le b_t\big]
  +P\big[2\mu(1-R_{N_t^\omega})>xt,\,N_t^\omega\le b_t\big]+P\big[N_t^\omega>b_t\big]\\
  &\le P\big[2\mu M_{b_t}^+>xt\big]+P\big[2\mu(1-M_{b_t}^-)>xt\big]+P\big[N_t^\omega>b_t\big].
\label{qtail2}
\end{align}
Lemma \ref{tail:Mn} gives $P[2\mu M_{b_t}^+>xt]\le
2e^{-\frac{(xt)^2}{8\mu^2b_t}}$ and part $(i)$ of Proposition
\ref{dist:Mn} provides the equality $P[2\mu(1-M_{b_t}^-)>xt]=P[2\mu
  M_{b_t}^+>xt]$. Moreover, since $t\le\mu b_t/2$ by hypothesis,
property $(b)$ yields for all sufficiently large $t$
\begin{equation*}
P\big[N_t^\omega>b_t\big]=P\big[T_{b_t}^\omega\le t\big]\le P\big[T_{b_t}^\omega\le \mu b_t/2\big]\le e^{-\kappa_o b_t^{1/3}}.
\end{equation*}
In conclusion, (\ref{qtail2}) shows that for all $t>t_o$ large enough
\begin{equation}
  P\big[|X_t^\omega|>xt\big]\le  4e^{-\frac{(xt)^2}{8\mu^2b_t}}+e^{-\kappa_o b_t^{1/3}}.
\label{qtail3}
\end{equation}
The value of $b_t$ that minimizes, at the same time, the contribution
$e^{-\frac{(xt)^2}{8\mu^2b_t}}$ coming from a large fluctuation of the
simple symmetric random walk and the contribution $e^{-\kappa_o
  b_t^{1/3}}$ coming from a large number of collisions is of the order
of magnitude of $(xt/\mu)^{3/2}$. By setting for instance
$b_t:=\lfloor 2t/\mu+ (xt/\mu)^{3/2}\rfloor+1\ge 2t/\mu$ in
(\ref{qtail3}) we get
\begin{equation*}
  \limsup_{t\uparrow+\infty}\frac{1}{\sqrt{xt}}\ln P\big[|X_t^\omega|>xt\big]\le -\min\bigg\{\frac{1}{8\sqrt{\mu}},\frac{\kappa_o}{\sqrt{\mu}}\bigg\}.
\end{equation*}

\begin{remark}
The stretching exponent of the estimate (\ref{qtail7}) can be improved
with no effort when there exists a real number $\delta_o>0$ such that
$\prob[\Delta_1\ge\delta_o]=1$. In such case, it is possible to choose
the set $\Omega_o$ of full probability with the further property that
$T_n^\omega\ge\delta_on$ for all $\omega\in\Omega_o$ and $n\ge 0$. It
follows that $\delta_o(N_t^\omega-1)\le T_{N_t^\omega-1}^\omega\le t$,
i.e.\ $N_t^\omega\le t/\delta_o+1$, for each $\omega\in\Omega_o$. By
making use of the integer $b_t:=\lfloor t/\delta_o\rfloor+1$ in
(\ref{qtail2}) we realize that for $\pae$
\begin{equation*}
  \limsup_{t\uparrow+\infty}\frac{1}{t}\ln P\big[|X_t^\omega|>xt\big]\le -\frac{\delta_ox^2}{8\mu^2}.
\end{equation*}
\end{remark}

\section{Quenched moments: proof of Theorem \ref{quenched_moment}}
\label{proof_qmoments}

The proof of Theorem \ref{quenched_moment} relies on the following
bound for the quenched moments.
\begin{lemma}
  \label{SupX}
  Assume that $\mu<+\infty$.  The following property holds
  for $\pae$: for each $q>0$ there exists a constant $C$ such that for all
  $t>0$
\begin{equation*}
E\big[|X_t^\omega|^q\big]\le C (\mu t)^{\frac{q}{2}}.
\end{equation*}
\end{lemma}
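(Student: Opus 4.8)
The plan is to establish a uniform-in-$t$ bound on the quenched moments by mimicking the structure of the proofs of lemmas \ref{bound_aux_q1} and \ref{bound_aux_ql}, but carrying out the estimates $P$-integrating only over $\mathcal{S}$ while keeping $\omega$ fixed on a full-measure event. First I would fix a full-measure event $\Omega_o$ on which the reached point sums $\sum_{k=1}^r\Delta_k^\omega$ and $\sum_{k=1-r}^0\Delta_k^\omega$ grow linearly, i.e.\ are bounded by $2\mu r$ for $r$ large, and on which the quenched collision-time estimate of lemma \ref{Ttailq} holds, namely $P[T_n^\omega\le\mu n/2]\le e^{-\kappa n^{1/3}}$ for all large $n$; this is exactly the event $\Omega_o$ used already in section \ref{qtail_proof}, so all its properties are available. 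On $\Omega_o$ the crude pointwise bound $|X_t^\omega|\le t$ together with $T_{n-1}^\omega\le t<T_n^\omega\Rightarrow |X_t^\omega|\le\max\{\sum_{k=1}^{R_n}\Delta_k^\omega,\sum_{k=R_n}^0\Delta_k^\omega\}$ (seen at the start of section \ref{proof:atail}) should let us replace the displacement by a sum of $M_{b_t}^\pm$-many interdistances, at the cost of the event $\{T_{b_t}^\omega\le t\}$ which is exponentially small in $b_t^{1/3}$ once $b_t\asymp t^{3/2}$.

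The key steps, in order, would be: (i) fix $q>0$ and $\omega\in\Omega_o$, set $b_t:=\lfloor(t/\mu)^{3/2}\rfloor+1$ (or some convenient power), and bound $E[|X_t^\omega|^q]$ by $E[(\sum_{k=1}^{M_{b_t}^+}\Delta_k^\omega)^q\mathds{1}_{\{M_{b_t}^+\ge1\}}]+E[(\sum_{k=1-M_{b_t}^-}^{0}\Delta_k^\omega)^q\mathds{1}_{\{M_{b_t}^-\le0\}}]+t^qP[T_{b_t}^\omega\le t]$, using part $(i)$ of proposition \ref{dist:Mn} to handle the left tail symmetrically; (ii) for large $t$, invoke $t\le\mu b_t/2$ and property $(c)$ to dispose of the last term, which decays like $e^{-\kappa b_t^{1/3}}\lesssim e^{-\kappa' t^{1/2}}$ and is therefore $o(t^{q/2})$; (iii) for the main terms, split on whether $M_{b_t}^+$ exceeds a threshold of order $\sqrt{t}$: on $\{M_{b_t}^+\le c\sqrt{t}\}$ the linear-growth property on $\Omega_o$ gives $\sum_{k=1}^{M_{b_t}^+}\Delta_k^\omega\le2\mu M_{b_t}^+\le 2\mu c\sqrt{t}$ deterministically, hence that part of the expectation is at most $(2\mu c\sqrt{t})^q$; on the complementary event one uses the Gaussian-type tail $P[M_{b_t}^+>c\sqrt{t}]\le 2e^{-c^2t/(2b_t)}$ from lemma \ref{tail:Mn} together with the trivial bound $\sum_{k=1}^{M_{b_t}^+}\Delta_k^\omega\le T_{M_{b_t}^+}^\omega\le T_{b_t}^\omega$ when $M_{b_t}^+\le b_t$, controlling $E[(T_{b_t}^\omega)^{2q}]$ or simply $(T_{b_t}^\omega\le$ something polynomial) — actually it is cleaner to bound $\sum_{k=1}^{M_{b_t}^+}\Delta_k^\omega\le 2\mu M_{b_t}^+$ on the event where the partial sums are already linear (which on $\Omega_o$ holds for all indices past $r_\star$), so that in all cases the sum is $\le C(\omega)+2\mu M_{b_t}^+$, and then apply $E[(M_{b_t}^+)^q]\le 2(qb_t)^{q/2}\le 2(q(t/\mu)^{3/2}+q)^{q/2}$ from lemma \ref{moment:Mn}.

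The main obstacle I anticipate is that the naive choice $b_t\asymp t^{3/2}$ makes $E[(M_{b_t}^+)^q]\asymp b_t^{q/2}\asymp t^{3q/4}$, which is far larger than the target $t^{q/2}$; so a single crude split is not enough. The fix is the same two-regime dichotomy used inside lemma \ref{bound_aux_ql}: on the linear-growth event the displacement is bounded by $2\mu M_{b_t}^+$ \emph{and} simultaneously by $t$, and one must exploit that $M_{b_t}^+$ cannot realistically be larger than $O(\sqrt{t})$ without incurring the stretched-exponential penalty $e^{-c t^{1/2}}$ from lemma \ref{tail:Mn}; quantitatively, one writes $E[(2\mu M_{b_t}^+)^q\mathds{1}_{\{M_{b_t}^+\le b_t\}}]=\int_0^{(2\mu b_t)^q}P[(2\mu M_{b_t}^+)^q>s]\,ds$ and splits the $s$-integral at $s=(2\mu c\sqrt{t})^q$, where below the split point the integrand is $\le1$ contributing $(2\mu c\sqrt t)^q$, and above it the integrand is $\le 2e^{-s^{2/q}/(8\mu^2 b_t)}$, whose integral over $s\ge(2\mu c\sqrt t)^q$ is, for $b_t\asymp t^{3/2}$, of order $b_t^{q/2}e^{-c^2 t/(2b_t)}=t^{3q/4}e^{-c' t^{-1/2}}$ — which is \emph{not} small, revealing that $b_t\asymp t^{3/2}$ is the wrong scaling. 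The genuinely correct choice is $b_t\asymp t$ (say $b_t:=\lfloor 2t/\mu\rfloor+1$ as in section \ref{proof:atail}), for which $t\le\mu b_t/2$ still holds, $P[T_{b_t}^\omega\le t]\le e^{-\kappa b_t^{1/3}}$ is negligible, and the tail bound $P[M_{b_t}^+>c\sqrt t]\le 2e^{-c^2 t/(2b_t)}=2e^{-c^2\mu/4}$ combined with the above $s$-integral gives a contribution of order $b_t^{q/2}e^{-c^2\mu/4}\asymp t^{q/2}$, matching the target. So the real work is choosing $b_t\asymp t$, pushing the crude bound $E[|X_t^\omega|^q]\le(2\mu c\sqrt t)^q+O(t^{q/2})+t^q e^{-\kappa b_t^{1/3}}$ through both the $R_n\ge1$ and $R_n\le0$ branches (the latter via proposition \ref{dist:Mn}$(i)$), and absorbing the $\omega$-dependent additive constant $C(\omega)$ from the pre-$r_\star$ interdistances into the final constant $C$; collecting everything yields $E[|X_t^\omega|^q]\le C(\omega)\,t^{q/2}$ for all $t>0$ (trivially for small $t$ since $|X_t^\omega|\le t$), which is the claim.
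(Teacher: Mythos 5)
Your proposal is correct and, once you discard the initial $b_t\asymp t^{3/2}$ detour, lands on exactly the paper's approach: fix the full-measure event $\Omega_o$ of linear partial-sum growth plus the quenched collision-time estimate of Lemma \ref{Ttailq}, take $b_t:=\lfloor 2t/\mu\rfloor+1$, reduce $|X_t^\omega|$ to $\sigma M_{b_t}^+$ (and $\sigma(1-M_{b_t}^-)$ via Proposition \ref{dist:Mn}$(i)$) at the cost of the term $t^qP[T_{b_t}^\omega\le t]=O(t^qe^{-\kappa b_t^{1/3}})$, and then bound the moment of $M_{b_t}^+$. The only cosmetic difference is that the paper reads $E[(M_{b_t}^+)^q]\le 2(qb_t)^{q/2}=O(t^{q/2})$ directly off Lemma \ref{moment:Mn}, whereas you re-derive the same $O(t^{q/2})$ estimate by hand via a threshold split at $M_{b_t}^+\asymp\sqrt t$ and the Gaussian tail of Lemma \ref{tail:Mn}.
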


\begin{proof}[Proof of Lemma \ref{SupX}]
  Set $b_t:=\lfloor 2t/\mu\rfloor+1$, which is used to cut off the
  number of collisions by time $t$. The strong law of large numbers
  for i.i.d.\ random variables and Lemma \ref{Ttailq} tell us that
  there exist $\Omega_o\in\mathscr{F}$ with $\prob[\Omega_o]=1$ and
  $\kappa>0$ such that for each $\omega\in\Omega_o$
\begin{enumerate}[$(a)$]
\item $\lim_{r\uparrow+\infty}(1/r)\sum_{k=1}^r\Delta_k^\omega=\mu$ and $\lim_{r\uparrow+\infty}(1/r)\sum_{k=1-r}^0\Delta_k^\omega=\mu$;
\item $P[N_t^\omega>b_t]=P[T_{b_t}^\omega\le t]\le P[T_{b_t}^\omega\le \mu b_t/2]\le e^{-\kappa b_t^{1/3}}$ for all sufficiently large $t$. 
\end{enumerate}
Fix $\omega\in\Omega_o$ and $q>0$. Property $(a)$ entails that a
positive constant $\lambda$ can be found in such a way that
$\sum_{k=1}^r\Delta_k^\omega\le \lambda r$ and
$\sum_{k=1-r}^0\Delta_k^\omega\le \lambda r$ for all $r\ge 1$. Since
$0\le X_t^\omega\le \sum_{k=1}^{R_{N_t^\omega}}\Delta_k^\omega$ or
$-\sum_{k=R_{N_t^\omega}}^0\Delta_k^\omega\le X_t^\omega\le 0$
according to $R_{N_t^\omega}\ge 1$ or $R_{N_t^\omega}\le 0$, as we
have seen at the beginning of Section \ref{proof:atail}, it follows
that $|X_t^\omega|\le \lambda R_{N_t^\omega}$ or $|X_t^\omega|\le
\lambda (1-R_{N_t^\omega})$ according to $R_{N_t^\omega}\ge 1$ or
$R_{N_t^\omega}\le 0$. Thus, bearing in mind that $|X_t^\omega|\le t$,
for all $t>0$ we get
\begin{align}
\nonumber
E\big[|X_t^\omega|^q\big]&\le E\Big[|X_t^\omega|^q\mathds{1}_{\{R_{N_t^\omega}\ge 1,\,N_t\le b_t\}}\Big]+
E\Big[|X_t^\omega|^q\mathds{1}_{\{R_{N_t^\omega}\le 0,\,N_t\le b_t\}}\Big]
+t^qP\big[N_t^\omega>b_t\big]\\
\nonumber
&\le \lambda^q E\Big[R_{N_t^\omega}^q\mathds{1}_{\{R_{N_t^\omega}\ge 1,\,N_t\le b_t\}}\Big]+
\lambda^q E\Big[(1-R_{N_t^\omega})^q\mathds{1}_{\{R_{N_t^\omega}\le 0,\,N_t\le b_t\}}\Big]
+t^qP\big[N_t^\omega>b_t\big]\\
\nonumber
&\le\lambda^qE\big[(M_{b_t}^+)^q\big]+\lambda^qE\big[(1-M_{b_t}^-)^q\big]+t^qP\big[N_t^\omega>b_t\big].
\end{align}
This bound combined with property $(b)$ proves the lemma since Lemma
\ref{moment:Mn} tells us that $E[(M_{b_t}^+)^q]\le
2(qb_t)^{\frac{q}{2}}$ and part $(i)$ of Proposition \ref{dist:Mn}
gives $E[(1-M_{b_t}^-)^q]=E[(M_{b_t}^+)^q]$.
\end{proof}

We can now prove Theorem \ref{quenched_moment} by treating normal
fluctuations as in Section \ref{proof_amoment}.  The quenched central
limit theorem stated by Theorem \ref{CLTq} and Lemma \ref{SupX} ensure
us that there exists $\Omega_o\in\mathscr{F}$ with $\prob[\Omega_o]=1$
such that, for every $\omega\in\Omega_o$, the scaled displacement
$X_t^\omega/\sqrt{\mu t}$ converges in distribution to a standard Gaussian
variable as $t$ is sent to infinity and
$\sup_{t\in\Rl_{>0}}\{E[|X_t^\omega/\sqrt{\mu t}|^q]\}<+\infty$ for
all $q>0$. Fix $\omega\in\Omega_o$, $q>0$, and a real number $L\ge 1$.
The same arguments that led to (\ref{normalfl}) show that
\begin{equation*}
  \lim_{t\uparrow+\infty}\frac{E[|X_t^\omega|^q\mathds{1}_{\{|X_t|\le L\sqrt{\mu t}\}}]}{t^{\frac{q}{2}}}
 =\sqrt{\frac{2\mu^q}{\pi}}\int_0^L \xi^q e^{-\frac{1}{2}\xi^2} d\xi.
\end{equation*}
On the other hand, the bound $\mathds{1}_{\{|X_t^\omega|>L\sqrt{\mu
    t}\}}\le |X_t^\omega|/(L\sqrt{\mu t})$ gives for all $t>0$
\begin{equation*}
  E\Big[|X_t^\omega|^q\mathds{1}_{\{|X_t^\omega|>L\sqrt{\mu t}\}}\Big]\le C\frac{(\mu t)^{\frac{q}{2}}}{L}
\end{equation*}
with $C:=\sup_{t\in\Rl_{>0}}\{E[|X_t^\omega/\sqrt{\mu t}|^{q+1}]\}<+\infty$.
This way, as in Section \ref{sec:qsmall}, the arbitrariness of $L$
results in
\begin{equation*}
  \lim_{t\uparrow+\infty}\frac{E[|X_t^\omega|^q]}{t^{\frac{q}{2}}}
 =\sqrt{\frac{2\mu^q}{\pi}}\int_0^{+\infty} \xi^q e^{-\frac{1}{2}\xi^2} d\xi=g_q.
\end{equation*}

\begin{appendix}

\section{Proof of Lemma \ref{prop_funalpha}}
\label{proof:prop_funalpha}

Let us verify that $0<x^\alpha f_\alpha(x)\le 1$ for all
$x\in(0,1)$. Positivity of $f_\alpha(x)$ is explained by the fact that
for any $x\in(0,1)$ and $l<(1-x)/(2x)$ we have
\begin{equation*}
  \frac{2l+2}{1+x}=\frac{2l}{1-x}+2\frac{1-x-2lx}{1-x^2}>\frac{2l}{1-x}.
\end{equation*}
The bound from above is deduced as follows. Pick $x\in(0,1)$ and set
$\lambda:=2x/(1+x)\in(0,1)$. Since $\alpha\ge 1$, we find by convexity
\begin{equation*}
  x^\alpha\bigg(\frac{2l+2}{1+x}\bigg)^\alpha=\bigg[\lambda+(1-\lambda)x\frac{2l}{1-x}\bigg]^\alpha\le\lambda+(1-\lambda)x^\alpha\bigg(\frac{2l}{1-x}\bigg)^\alpha
  \le\lambda+x^\alpha\bigg(\frac{2l}{1-x}\bigg)^\alpha
\end{equation*}
for all $l$. It follows that
\begin{equation*}
  x^\alpha f_\alpha(x)=\sum_{l=0}^{+\infty}\mathds{1}_{\big\{l<\frac{1-x}{2x}\big\}}\Bigg[x^\alpha\bigg(\frac{2l+2}{1+x}\bigg)^\alpha
    -x^\alpha\bigg(\frac{2l}{1-x}\bigg)^\alpha\Bigg]\le\lambda\sum_{l=0}^{+\infty}\mathds{1}_{\big\{l<\frac{1-x}{2x}\big\}}\le\lambda\frac{1+x}{2x}=1.
\end{equation*}

The limit $\lim_{x\downarrow 0}(\alpha+1)\,x^\alpha f_\alpha(x)=1$
needs some more effort to be proved. For every $x\in(0,1/2]$ we have
\begin{equation}
  1-\alpha x\le(1+x)^{-\alpha}\le 1-\alpha x+\alpha(\alpha+1)x^2
\label{fa_lemma_1}
\end{equation}
and
\begin{equation}
1+\alpha x\le (1-x)^{-\alpha}\le 1+\alpha x+\alpha(\alpha+1)2^{\alpha+1}x^2.
\label{fa_lemma_2}
\end{equation}
Moreover, the inequality
$[l^{\alpha+1}-(l-1)^{\alpha+1}]/(\alpha+1)\le l^\alpha\le
[(l+1)^{\alpha+1}-l^{\alpha+1}]/(\alpha+1)$ valid for $l\ge 1$ gives
for each $x\in(0,1/3]$
\begin{equation}
  \frac{1}{\alpha+1}\bigg(\frac{1-3x}{2x}\bigg)^{\alpha+1}\le\sum_{l=1}^{+\infty}\mathds{1}_{\big\{l<\frac{1-x}{2x}\big\}}l^\alpha
  \le\sum_{l=1}^{+\infty}\mathds{1}_{\big\{l<\frac{1+x}{2x}\big\}}l^\alpha\le\frac{1}{\alpha+1}\bigg(\frac{1+3x}{2x}\bigg)^{\alpha+1}.
\label{fa_lemma_3}
\end{equation}
By appealing to (\ref{fa_lemma_1}), (\ref{fa_lemma_2}), and
(\ref{fa_lemma_3}) we can state that for $x\in(0,1/3]$
\begin{align}
  \nonumber
  f_\alpha(x)&=\frac{2^\alpha}{(1+x)^\alpha}\sum_{l=1}^{+\infty}\mathds{1}_{\big\{l<\frac{1+x}{2x}\big\}}l^\alpha
  -\frac{2^\alpha}{(1-x)^\alpha}\sum_{l=1}^{+\infty}\mathds{1}_{\big\{l<\frac{1-x}{2x}\big\}}l^\alpha\\
  \nonumber
  &\le2^\alpha\Big[1-\alpha x+\alpha(\alpha+1)x^2\Big]\sum_{l=1}^{+\infty}\mathds{1}_{\big\{l<\frac{1+x}{2x}\big\}}l^\alpha
  -2^\alpha(1+\alpha x)\sum_{l=1}^{+\infty}\mathds{1}_{\big\{l<\frac{1-x}{2x}\big\}}l^\alpha\\
  \nonumber
  &\le 2^\alpha\sum_{l=1}^{+\infty}\mathds{1}_{\big\{\frac{1-x}{2x}\le l<\frac{1+x}{2x}\big\}}l^\alpha
  -\alpha 2^{\alpha+1} x\sum_{l=1}^{+\infty}\mathds{1}_{\big\{l<\frac{1-x}{2x}\big\}}l^\alpha+
  \alpha(\alpha+1) 2^\alpha x^2\sum_{l=1}^{+\infty}\mathds{1}_{\big\{l<\frac{1+x}{2x}\big\}}l^\alpha\\
  &\le \bigg(\frac{1+x}{x}\bigg)^\alpha-\frac{\alpha x}{\alpha+1}\bigg(\frac{1-3x}{x}\bigg)^{\alpha+1}+
  \alpha(\alpha+1)x^2\bigg(\frac{1+x}{x}\bigg)^{\alpha+1}
  \label{fa_lemma_4}
\end{align}
and
\begin{align}
  \nonumber
  f_\alpha(x)&\ge2^\alpha\big(1-\alpha x\big)\sum_{l=1}^{+\infty}\mathds{1}_{\big\{l<\frac{1+x}{2x}\big\}}l^\alpha
  -2^\alpha\Big[1+\alpha x+\alpha(\alpha+1)2^{\alpha+1}x^2\Big]\sum_{l=1}^{+\infty}\mathds{1}_{\big\{l<\frac{1-x}{2x}\big\}}l^\alpha\\
  \nonumber
  &\ge 2^\alpha\sum_{l=1}^{+\infty}\mathds{1}_{\big\{\frac{1-x}{2x}\le l<\frac{1+x}{2x}\big\}}l^\alpha
  -\alpha 2^{\alpha+1} x\sum_{l=1}^{+\infty}\mathds{1}_{\big\{l<\frac{1+x}{2x}\big\}}l^\alpha
  -\alpha(\alpha+1) 2^{2\alpha+1}x^2\sum_{l=1}^{+\infty}\mathds{1}_{\big\{l<\frac{1-x}{2x}\big\}}l^\alpha\\
  &\ge \bigg(\frac{1-x}{x}\bigg)^\alpha
  -\frac{\alpha x}{\alpha+1}\bigg(\frac{1+3x}{x}\bigg)^{\alpha+1}
  -\alpha(\alpha+1) 2^\alpha x^2\bigg(\frac{1-x}{x}\bigg)^{\alpha+1}.
  \label{fa_lemma_5}
\end{align}
Bounds (\ref{fa_lemma_4}) and (\ref{fa_lemma_5}) give $\lim_{x\downarrow 0}(\alpha+1)\,x^\alpha f_\alpha(x)=1$.

\section{Proof of Proposition \ref{dist:Mn}}
\label{proof:distMn}

Part $(i)$ is an immediate consequence of the fact that
$(-V_1,\ldots,-V_n)$ is distributed as $(V_1,\ldots,V_n)$ for each
$n\ge 1$, which entails that $(1-R_1,\ldots,1-R_n)$ is distributed as
$(R_1,\ldots,R_n)$. Part $(ii)$ can be verified by induction. To this
aim, we first notice that by definition $P[M_1^+=0]=P[M_1^+=1]=1/2$.
Then, by appealing to the fact that $(R_2-V_1,\ldots,R_n-V_1)$ is
independent of $V_1$ and distributed as $(R_1,\ldots,R_{n-1})$, we
observe that for each $n\ge 2$ and $m\ge 0$
  \begin{align}
    \nonumber
    P\big[M_n^+=m\big]&=\sum_{v\in\mathbb{Z}_2}P\Big[M_n^+=m,V_1=v\Big]\\
    \nonumber
    &=\sum_{v\in\mathbb{Z}_2}P\bigg[\max\Big\{(1+v)/2,R_2-V_1+v\ldots,R_n-V_1+v\Big\}=m,V_1=v\bigg]\\
    \nonumber
    &=\sum_{v\in\mathbb{Z}_2}P\bigg[\max\Big\{(1+v)/2,R_1+v\ldots,R_{n-1}+v\Big\}=m\bigg]P\big[V_1=v\big]\\
    \nonumber
    &=\frac{1}{2}\sum_{v\in\mathbb{Z}_2}P\Big[\max\big\{(1-v)/2,M_{n-1}^+\big\}=m-v\Big]\\
    \nonumber
    &=\frac{1}{2}P\big[M_{n-1}^+=0\vee(m-1)\big]+\frac{1}{2}P\big[M_{n-1}^+=m+1\big].
    \qedhere
  \end{align}

\section{Proof of Lemma \ref{tail:Mn}}
\label{proof:tailMn}

Pick $n\ge 1$ and $L>0$. Assume that $L\le n$, otherwise there is
nothing to prove since $P[M_n^+>L]=0$ when $L>n$. It is a simple
exercise of calculus to verify that
$1\le(1-z)^{1-z}(1+z)^{1+z}\,e^{-z^2}$ for $z\in[0,1]$ with the
convention $0^0:=1$. It follows that
$1\le(1-z)^{1-z}(1+z)^{1+z}\,e^{-z^2}\le(1-z)^{2-\zeta}(1+z)^{\zeta}\,e^{-z^2}$
if $z\in[0,1]$ and $\zeta>1+z$. Thus, by setting $z=L/n$ and
$\zeta=2l/n$, $l$ being a given integer, and by taking the power $n/2$
we realize that
\begin{equation}
1\le\bigg(\frac{n-L}{n}\bigg)^{n-l} \bigg(\frac{n+L}{n}\bigg)^l e^{-\frac{L^2}{2n}}
\label{tail:Mn1}
\end{equation}
whenever $2l>n+L$. Combining part $(ii)$ of Proposition \ref{dist:Mn}
with (\ref{tail:Mn1}) we get
\begin{align}
\nonumber
P\big[M_n^+>L\big]&=\frac{1}{2^n}\sum_{m=0}^n\mathds{1}_{\{m>L\}}{n \choose \lfloor\frac{n+m+1}{2}\rfloor}
\le\frac{1}{2^{n-1}}\sum_{l=0}^n\mathds{1}_{\{2l>n+L\}}{n \choose l}\\
\nonumber
&\le\frac{1}{2^{n-1}}\sum_{l=0}^n{n \choose l}\bigg(\frac{n-L}{n}\bigg)^{n-l}\bigg(\frac{n+L}{n}\bigg)^l e^{-\frac{L^2}{2n}}=2e^{-\frac{L^2}{2n}}.
\end{align}

\section{Proof of Lemma \ref{moment:Mn}}
\label{proof:momentMn}

Pick $n\ge 1$ and $q>0$ and set $\lambda:=\sqrt{q/(en)}$. Part $(ii)$
of Proposition \ref{dist:Mn} gives
\begin{equation*}
  E\big[e^{\lambda M_n^+}\big]=\frac{1}{2^n}\sum_{m=0}^n e^{\lambda m}{n \choose \lfloor\frac{n+m+1}{2}\rfloor}
  \le\frac{1}{2^{n-1}}\sum_{l=0}^n e^{\lambda(2l-n)}{n \choose l}=2\cosh^n\lambda\le 2e^{\frac{q}{2}}.
\end{equation*}
The last inequality follows from the bound $\cosh\lambda\le
1+(\lambda^2/2)e^{\lambda}\le 1+e\lambda^2/2\le e^{e\lambda^2/2}$ if
$\lambda\le 1$ and $\cosh\lambda\le e^{\lambda}\le e^{e\lambda^2/2}$
if $\lambda>1$.  Then, by exploiting the fact that $z^q\le q^qe^{z-q}$
for all $z\ge 0$ we find
\begin{equation*}
  E\big[(M_n^+)^q\big]=\frac{E[(\lambda M_n^+)^q]}{\lambda^q}\le\frac{E\big[q^qe^{\lambda M_n^+-q}\big]}{\lambda^q}
  \le \frac{2q^qe^{-\frac{q}{2}}}{\lambda^q}=2(qn)^{\frac{q}{2}}.
\qedhere
\end{equation*}

\section{Proof of Proposition \ref{Udist}}
\label{proof:Udist}

Part $(i)$ follows from the fact that $(1-R_1,\ldots,1-R_n)$ is
distributed as $(R_1,\ldots,R_n)$ for every $n\ge 1$ and is immediate.
Let us discuss part $(ii)$.  Since $(R_2-V_1,\ldots,R_n-V_1)$ is
independent of $V_1$ and distributed as $(R_1,\ldots,R_{n-1})$ we have
for any $n\ge 2$, $r\ge 1$, and $u\ge 1$
\begin{align}
\nonumber
P\big[U_{n,r}=u\big]&=\sum_{v\in\mathbb{Z}_2}P\Big[U_{n,r}=u,V_1=v\Big]\\
\nonumber
&=\sum_{v\in\mathbb{Z}_2}P\Bigg[\mathds{1}_{\{(1+v)/2=r\}}+\sum_{k=2}^n\mathds{1}_{\{R_k-V_1=r-v\}}=u,V_1=v\Bigg]\\
\nonumber
&=\sum_{v\in\mathbb{Z}_2}P\Bigg[\mathds{1}_{\{(1+v)/2=r\}}+\sum_{k=1}^{n-1}\mathds{1}_{\{R_k=r-v\}}=u\Bigg]P\big[V_1=v\big]\\
\nonumber
&=\frac{1}{2}\sum_{v\in\mathbb{Z}_2}P\Big[\mathds{1}_{\{(1+v)/2=r\}}+U_{n-1,r-v}=u\Big]\\
\nonumber
&=\frac{1}{2}P\big[U_{n-1,r-1}=u-\mathds{1}_{\{r=1\}}\big]+\frac{1}{2}P\big[U_{n-1,r+1}=u\big].
\end{align}
As $U_{n,0}$ is distributed as $U_{n,1}$ by part $(i)$, this
equality shows that for all $n\ge 2$, $r\ge 1$, and $u\ge 1$
\begin{align}
  \nonumber
  P\big[U_{n,r}=u\big]&=\frac{1}{2}\begin{cases}
  P[U_{n-1,1}=u-1]+P[U_{n-1,2}=u] & \mbox{if }r=1,\\
  P[U_{n-1,r-1}=u]+P[U_{n-1,r+1}=u] & \mbox{if }r>1.
  \end{cases}\\
  \nonumber
  &=\frac{1}{2}\begin{cases}
  1-\sum_{k=1}^{+\infty} P[U_{n-1,1}=k]+P[U_{n-1,2}=1] & \mbox{if $r=1$ and $u=1$},\\
  P[U_{n-1,1}=u-1]+P[U_{n-1,2}=u] & \mbox{if $r=1$ and $u>1$},\\
  P[U_{n-1,r-1}=u]+P[U_{n-1,r+1}=u] & \mbox{if }r>1.
  \end{cases}
\end{align}
Part $(ii)$ can then be verified by induction through simple algebra
starting from the fact that $P[U_{1,1}=1]=1/2$ and $P[U_{1,r}=u]=0$ if
$r\ge 1$ and $u\ge 1$ are not simultaneously equal to $1$.

\section{Proof of Lemma \ref{boundU}}
\label{proof:boundU}

Fix $n\ge 1$ and $L>0$.  There is nothing to prove if $L>n$ as
$P[\max_{r\in\mathbb{Z}}\{U_{n,r}\}>L]=0$ in such a case. Therefore,
assume that $L\le n$. The condition
$\max_{r\in\mathbb{Z}}\{U_{n,r}\}>L$ implies that there exists at
least one $r\in\mathbb{Z}$ such that $U_{n,r}>L$. Then
\begin{equation*}
P\Big[\max_{r\in\mathbb{Z}}\{U_{n,r}\}>L\Big]\le\sum_{r=-\infty}^{+\infty} P\big[U_{n,r}>L\big].
\end{equation*}
At this point, by invoking part $(i)$ of Proposition \ref{Udist} first
and part $(ii)$ later, we can write down the bound
\begin{align}
\nonumber
P\Big[\max_{r\in\mathbb{Z}}\{U_{n,r}\}>L\Big]&\le2\sum_{r=1}^{+\infty} P\big[U_{n,r}>L\big]
=\frac{1}{2^{n-1}}\sum_{r=1}^{+\infty} \sum_{u=1}^{+\infty} \mathds{1}_{\{u>L\}}{n\choose\lfloor\frac{n+r+u}{2}\rfloor}\\
\nonumber
&\le\frac{1}{2^{n-2}}\sum_{r=1}^n \sum_{l=0}^n \mathds{1}_{\{2l\ge n+r+L\}}{n\choose l}
\le\frac{n}{2^{n-2}}\sum_{l=0}^n \mathds{1}_{\{2l>n+L\}}{n\choose l}\le 4n e^{-\frac{L^2}{2n}}.
\end{align}
The last inequality is due to (\ref{tail:Mn1}) as in the proof of Lemma
\ref{tail:Mn}.

\section{Proof of Lemma \ref{Ecal_estimate}}
\label{proof:Ecal_estimate}

For each $n\ge 1$, the condition $\min\{R_{n+1},\ldots,R_{n\vee
  p+1}\}>R_n$ implies $R_{n+1}>R_n$, which is feasible only if
$V_n=V_{n+1}=1$. The latter yields $R_n=S_n$. Then, for all integers
$0\le p\le q$ and $u>0$ we have
  \begin{align}
    \nonumber
    \mathcal{E}_u\{p,q\}&:=\sum_{n=1}^q\,P\Big[U_{n,R_n}=u,\,\min\big\{R_{n+1},\ldots,R_{n\vee p+1}\big\}>R_n\Big]\\
    \nonumber
    &\le \sum_{n=1}^q\,P\Big[U_{n,S_n}=u,\,\min\big\{R_{n+1},\ldots,R_{n\vee p+1}\big\}>S_n,\,V_{n+1}=1\Big]\\
    \nonumber
    &\le\sum_{n=1}^p\,P\Big[U_{n,S_n}=u,\,\min\big\{R_{n+1}-S_n,\ldots,R_{p+1}-S_n\big\}>0\Big]+\sum_{n=p+1}^q\,P\Big[U_{n,S_n}=u,\,V_{n+1}=1\Big].
\end{align}
On the other hand, $\min\{R_{n+1}-S_n,\ldots,R_{p+1}-S_n\}$ with $n<p$
is independent of $V_1,\ldots,V_n$ and distributed as
$\min\{R_1,\ldots,R_{p-n+1}\}=:M_{p-n+1}^-$. Thus, since $U_{n,S_n}$
is a measurable function of $V_1,\ldots,V_n$, we find
\begin{equation*}
\mathcal{E}_u\{p,q\}\le\sum_{n=1}^p P\big[U_{n,S_n}=u\big] P\big[M_{p-n+1}^->0\big]+\frac{1}{2}\sum_{n=p+1}^q P\big[U_{n,S_n}=u\big].
\end{equation*}
A further step is taken by observing that $U_{n,S_n}$ is distributed
as $U_{n,1}$, which is an immediate consequence of the fact that
$(V_n,\ldots,V_1)$ is distributed as $(V_1,\ldots,V_n)$.  We also
recall that $M_{p-n+1}^-$ is distributed as $1-M_{p-n+1}^+$ by
Proposition \ref{dist:Mn} and that $M_{p-n+1}^+\ge 0$. Then
\begin{equation}
\mathcal{E}_u\{p,q\}\le\sum_{n=1}^p P\big[U_{n,1}=u\big] P\big[M_{p-n+1}^+=0\big]+\frac{1}{2}\sum_{n=p+1}^q P\big[U_{n,1}=u\big].
\label{Eqpu_2}
\end{equation}
The last step makes use of the explicit distributions of $U_{n,1}$ and
$M_{p-n+1}^+$ provided by Propositions \ref{Udist} and \ref{dist:Mn},
respectively. We point out that the hypothesis $u>0$ is important
here. Bear in mind that the bounds $\sqrt{2\pi l}\,l^le^{-l}\le
l!\le\sqrt{2\pi l}\,l^le^{-l+\frac{1}{12 l}}$ valid for all positive
integers $l$ \cite{Stirling} give for every $n\ge 1$ and $u>0$
\begin{equation*}
  \frac{1}{2^n}{n\choose\lfloor\frac{n+u+1}{2}\rfloor}\le \frac{1}{2^n}{n\choose\lfloor\frac{n+1}{2}\rfloor}=
  \frac{1}{2^n}{n\choose\lfloor\frac{n}{2}\rfloor}\le
  \sqrt{\frac{2e^{\frac{1}{6n}}}{\pi n}}\le\frac{1}{\sqrt{n}}.
\end{equation*}
Thus, by combining (\ref{Eqpu_2}) with Propositions \ref{Udist} and
\ref{dist:Mn} we reach the result
\begin{align}
  \nonumber
  \mathcal{E}_u\{p,q\}&\le\sum_{n=1}^p \frac{1}{2^n}{n \choose \lfloor\frac{n+u+1}{2}\rfloor}
  \frac{1}{2^{p-n+1}}{p-n+1 \choose \lfloor\frac{p-n+2}{2}\rfloor}
  +\sum_{n=p+1}^q\frac{1}{2^{n+1}}{n \choose \lfloor\frac{n+u+1}{2}\rfloor}\\
  \nonumber
  &\le\sum_{n=1}^p\frac{1}{\sqrt{n(p-n+1)}}+\sum_{n=p+1}^q\frac{1}{2\sqrt{n}}=
  \frac{1}{\sqrt{p+1}}\sum_{n=1}^p\sqrt{\frac{1}{n}+\frac{1}{p-n+1}}+\sum_{n=p+1}^q\frac{1}{2\sqrt{n}}\\
  \nonumber
  &\le\frac{1}{\sqrt{p}}\sum_{n=1}^p\bigg(\frac{1}{\sqrt{n}}+\frac{1}{\sqrt{p-n+1}}\bigg)+\sum_{n=p+1}^q\frac{1}{2\sqrt{n}}
  \le 4+\sqrt{q}-\sqrt{p},
\end{align}
where we have used the fact that $1/\sqrt{n}\le 2\sqrt{n}-2\sqrt{n-1}$
for each integer $n\ge 1$ to get the last inequality. This bound
proves the lemma as $-\sqrt{p}\le 1-\sqrt{p+1}$ for each $p\ge 0$.

\section{Proof of Lemma \ref{sommeUv}}
\label{proof:sommeUv}

Fix $n\ge 1$.  To begin with, we observe that
$U_{n,R_n}-\mathds{1}_{\{V_n=-1\}}\ge 0$ is odd if $R_n\ge 1$ for
geometric reasons. In fact, for a random walk starting from the
origin, $U_{n,R_n}$ counts the passages up to jump $n$ over the
current edge $R_n$, which is traveled with velocity direction
$V_n$. On the other hand,
$U_{n,R_n}-\mathds{1}_{\{V_n=-1\}}=U_{n-1,R_n}+\mathds{1}_{\{V_n=1\}}$,
so that $\mathcal{P}_{u,v}\{n\}=0$ unless $u+\mathds{1}_{\{v=1\}}$ is
odd. This proves part of the lemma, and to complete the proof it is
sufficient to show that
\begin{equation}
\mathcal{P}_{u,-1}\{n\}+\mathcal{P}_{u,1}\{n\}=\frac{1}{2^n}{n-1\choose\lfloor\frac{n+u}{2}\rfloor}.
\label{Pcalnuv0}
\end{equation}

By writing $R_n$ as $S_{n-1}+\mathds{1}_{\{V_n=1\}}$ and by appealing
to the independence of the velocities, from the definition
(\ref{Pcalnuv}) we find
  \begin{equation}
   \mathcal{P}_{u,-1}\{n\}=\frac{1}{2}P\Big[U_{n-1,S_{n-1}}=u,S_{n-1}\ge 1\Big]
   \label{Pcalnuv1}
  \end{equation}
  and
  \begin{equation}
   \mathcal{P}_{u,1}\{n\}=\frac{1}{2}P\Big[U_{n-1,S_{n-1}+1}=u,S_{n-1}\ge 0\Big].
   \label{Pcalnuv2}
  \end{equation}  
  On the other hand, $(U_{n-1,S_{n-1}+r},S_{n-1})$ is distributed as
  $(U_{n-1,1-r},S_{n-1})$ for every $r\in\mathbb{Z}$ due to the fact
  that $(V_{n-1},\ldots,V_1)$ is distributed as
  $(V_1,\ldots,V_{n-1})$. Moreover, $(U_{n-1,0},S_{n-1})$ is
  distributed as $(U_{n-1,1},-S_{n-1})$ since $(V_1,\ldots,V_{n-1})$ is
  distributed as $(-V_1,\ldots,-V_{n-1})$.  It follows from
  (\ref{Pcalnuv1}) and (\ref{Pcalnuv2}) that
\begin{equation*}
\mathcal{P}_{u,-1}\{n\}=\frac{1}{2}P\Big[U_{n-1,1}=u,S_{n-1}\ge 1\Big]
\end{equation*}
and
\begin{equation*}
\mathcal{P}_{u,1}\{n\}=\frac{1}{2}P\Big[U_{n-1,0}=u,S_{n-1}\ge 0\Big]=\frac{1}{2}P\Big[U_{n-1,1}=u,S_{n-1}\le 0\Big],
\end{equation*}
so that
\begin{equation*}
\mathcal{P}_{u,-1}\{n\}+\mathcal{P}_{u,1}\{n\}=\frac{1}{2}P\big[U_{n-1,1}=u\big].
\end{equation*}
This identity proves (\ref{Pcalnuv0}) thanks to Proposition
\ref{Udist}, which gives
$P[U_{n-1,1}=u]=\frac{1}{2^{n-1}}{n-1\choose\lfloor\frac{n+u}{2}\rfloor}$
for all $u\ge 0$.

\end{appendix}

\begin{acks}[Acknowledgments]
The author is grateful to Frank den Hollander and Giambattista
Giacomin for critical reading of the manuscript and valuable
comments. The author is also grateful to the two anonymous referees
for careful reading of the manuscript, and for useful suggestions that
have allowed to improve the text and clarify some of the results.

\end{acks}
%



\end{document}